\newcommand{\naturals}{\mathbb{N}}
\newcommand{\integers}{\mathbb{Z}}
\newtheorem*{corollary*}{Corollary}
\newtheorem*{theorem*}{Theorem}
\newtheorem{theorem}{Theorem}[section]
\newtheorem{lemma}[theorem]{Lemma}
\newtheorem{corollary}[theorem]{Corollary}
\newtheorem{observation}[theorem]{Observation}
\theoremstyle{definition}
\newtheorem{definition}[theorem]{Definition}
\newtheorem*{example*}{Example}
\newtheorem*{convention*}{Convention}
\theoremstyle{remark}
\newcommand{\dl}{\text{DL}}
\newcommand{\Z}{\mathbb{Z}}
\newcommand{\kjcomment}[1]%
    {\ifthenelse{\boolean{showcomments}}%
        {\textcolor{blue}{(N.B:[KJ] #1)}}{}}
\newcommand{\gkcomment}[1]%
    {\ifthenelse{\boolean{showcomments}}%
        {\textcolor{ForestGreen}{(N.B:[GK] #1)}}{}}
\newcommand{\kjcommentm}[1]%
    {\ifthenelse{\boolean{showcomments}}%
        {\marginpar{\textcolor{blue}{(N.B:[KJ] #1)}}}{}}
\newcommand{\gkcommentm}[1]%
    {\ifthenelse{\boolean{showcomments}}%
        {\marginpar{\textcolor{ForestGreen}{(N.B:[GK] #1)}}}{}}
\newcommand{\comment}[1]{\ifthenelse{\boolean{showcomments}}{\textcolor{red}{(N.B: #1)}}{}}
\newcommand{\hbdry}{\partial_h}
\newcommand{\lampstand}[1]{
\draw [<->] (-#1.5,0) -- (#1.5,0);
}
\newcommand{\labellamps}[1]{
\foreach \x in {-#1,...,#1} \draw (\x,-.4) node [below] {\x};
}
\newcommand{\unlitlamps}[1]{
\foreach \x in {-#1,...,#1} \draw [fill=white] (\x,0) circle (4pt);
}
\newcommand{\lightlamps}[2]{
\foreach \x in {#1} \draw [fill=#2] (\x, 0) circle (3.5pt);
}
\newcommand{\llpath}[2]{
\draw [#2,|->] plot  coordinates {#1};
}
\newcommand{\llmark}[2]{
\draw (#1,.125) -- (#1, -.125) node [below] {#2};
}
\newcommand{\llpos}[1]{
\draw [<-,thick] (#1,.3) -- (#1, .5);
}
\newcommand{\lampbegin}
{\begin{center}\begin{tikzpicture}}
\newcommand{\lampend}{\end{tikzpicture}\end{center}}
\newcommand{\llpathcolor}{blue}
\newcommand{\llya}{.4}
\newcommand{\llyb}{.6}
\begin{document}

\renewcommand{\bf}{\bfseries}
\renewcommand{\sc}{\scshape}
\vspace{0.5in}

\title[Horofunction Boundary of Lamplighter Group $L_2$]{The Horofunction Boundary of the Lamplighter Group $L_2$ with the
Diestel-Leader Metric}

\author{Keith Jones}
\address{Department of Mathematics, Computer Science \& Statistics; State University of New York College at Oneonta; 108 Ravine Parkway; Oneonta, NY 13820}
\email{keith.jones@oneonta.edu}

\author{Gregory A. Kelsey}
\address{Department of Mathematics; Bellarmine University; 2001 Newburg Rd.; Louisville, KY 40205}
\email{gkelsey@bellarmine.edu}

\subjclass[2010]{Primary 20F65, 20F69; Secondary 20E22, 05C25}

\keywords{horofunction, horoboundary, visual boundary, Diestel-Leader graphs, lamplighter groups}

\thanks{The authors thank Moon Duchin for 
helpful conversations and the anonymous referee for many thoughtful suggestions. 
Support for the first author from a summer research grant from
SUNY Oneonta is gratefully acknowledged. 
Both authors also thank the Institute for Advanced Study for its hospitality at the Park City Math Institute Summer Session 2012.}


\begin{abstract}
We fully describe the horofunction boundary $\hbdry L_2$ with the word metric
associated with the generating set $\{t,at\}$ (i.e the metric arising in the
Diestel-Leader graph $\dl(2,2)$). The visual boundary $\partial_\infty L_2$
with this metric is a subset of $\hbdry L_2$.  Although $\partial_\infty
L_2$ does not embed
continuously in $\partial_h L_2$, it naturally splits into two subspaces, each of which
is a punctured Cantor set and does embed continuously. 
The height function on $\dl(2,2)$ provides a natural
stratification of $\hbdry L_2$, in which countably-many non-Busemann points
interpolate between the two halves of $\partial_\infty L_2$.  Furthermore, the
height function and its negation are themselves non-Busemann horofunctions in
$\hbdry L_2$ and are global fixed points of the action of $L_2$.
\end{abstract}

\maketitle 

\section{Introduction}

The horofunction boundary $\partial_h X$ of a proper complete
 metric space $(X,d)$ is in general
defined as a subspace of the quotient of $C(X)$, the space of continuous
$\mathbb{R}$-valued functions on $X$, by constant functions
\cite[Definition II.8.12]{bh}. It suffices to
choose a base point $b$ in $X$ and use the embedding $i: X \hookrightarrow
C(X)$ sending $z\in X \mapsto d(z,x)-d(z,b)$.
Since $X$ is proper, the closure $\overline X$ of $i(X)$ in $C(X)$ provides
a compactification of $X$. 
We define $\partial_h X$ to be $\overline X \backslash i(X)$. 
We call a point in $\overline X$ a \emph{horofunction}, and given a sequence $(y_n)$
of points in $X$, one can define a horofunction associated to $(y_n)$ by

\begin{equation}
\label{horodefeqn}
h_{y_n}(x) = \lim_{n\rightarrow\infty} d(y_n,x) - d(y_n,b)
\end{equation}
provided this limit exists.

Gromov defines the horofunction boundary, which he calls the {\em ideal
boundary}, in the context of hyperbolic manifolds \cite{Gromov81}, but the
definition applies to any complete metric space. In \cite{bh} Bridson and
Haefliger use this construction in the context of CAT(0) spaces as a functorial
construction of the visual boundary.  The horofunction boundary also naturally
arises in the study of group $C^*$-algebras, where Rieffel, referring to it as
the {\em metric boundary}, demonstrates its usefulness
particularly in determining the $C^*$-algebra he calls the {\em cosphere
algebra}  \cite[\S3]{rieffel}.  

In this paper, $X$ is a group with a word metric, which is
$\mathbb{N}$-valued.\footnote{For us, $\mathbb{N}$ contains 0.} In this setting,
we define a geodesic ray to be an isometric
embedding $\mathbb{N} \rightarrow X$. We refer to point of $\partial_h X$ as a
{\em Busemann point} if it corresponds to a sequence of points lying along a
geodesic ray.  We will refer to the space of asymptotic classes of geodesic rays
in $(X,d)$ as the \textit{visual boundary} $\partial_\infty X$.  In CAT(0)
spaces, all horofunctions correspond to Busemann points; in fact, we can
extend $i$ to $\bar i: X \sqcup \partial_\infty X \rightarrow \overline X$, and
this is a homeomorphism \cite[\S II.8.13]{bh}.  In general one cannot expect an injective,
surjective, or even continuous map from $\partial_\infty X$ to $\partial_h X$.
Rieffel brings up the question of determining for a given space $(X,d)$ which
points of $\partial_h X$ are Busemann points \cite[after Definition
4.8]{rieffel}.  As an interesting example of non-injectivity, Reiffel
demonstrates that there are no non-Busemann points in $\partial_h \integers^n$
with the $\ell_1$ norm, and there are countably many Busemann points
\cite{rieffel}.  However, Kitzmiller and Rathbun demonstrate that
$\partial_\infty \integers^n$ is uncountable \cite{Z2boundary}.   

Others have studied the horofunction boundary of Cayley graphs of non-CAT(0) groups, 
often with variation in their terminology, though examples are still sparse.\footnote{Note,
for non-CAT(0) groups, $\partial_h$ depends on the generating set, as is
demonstrated in \cite[Example 5.2]{rieffel}.} 
Develin extended Rieffel's work to
abelian groups (he refers to the horofunction boundary as a \emph{Cayley
compactification} of the group) \cite{develin}. Friedland and Freitas found
explicit formulas for horofunctions for $GL(n, \mathbb{C})/\mathrm{U}_n$ with
Finsler $p$-metrics (they use the term \emph{Busemann compactification})
\cite{FF04}. Webster and Winchester (using the term \emph{metric boundary} as
Rieffel) studied the action of a word hyperbolic group on its horofunction
boundary and found it is amenable \cite{WW05}. They also established necessary
and sufficient conditions for an infinite graph to have non-Busemann points in
its horofunction boundary \cite{WW06}. Walsh has considered the horofunction
boundaries of Artin groups of dihedral type \cite{Walsh09} and the action of a
nilpotent group on its horofunction boundary \cite{Walsh11}. Klein and Nikas
have studied the horofunction boundary of the Heisenberg group equipped with
different metrics \cite{KN09}, \cite{KN10}. They determine the
isometry group of the Heisenberg group with the Carnot-Carath\'{e}odory metric.

The \textit{lamplighter group}, discussed more fully at the start of \S 2, 
is given by the presentation:
\[ L_2 = \langle a, t \mid a^2, [a^{t^i}, a^{t^j}] \forall\, i,j, \in
\integers\rangle \]
Let $S = \{t,at\}$. The generating set $S$ naturally arises when viewing the lamplighter
group as a group generated by a finite state automaton (FSA) \cite{GZ02}. 
This is a rare case where we are able to understand the Cayley graph of such a
group with its FSA generating set. 
In this case, the Cayley graph is the Diestel-Leader
graph $\dl(2,2)$ \cite{Woess-Lamplighter}.  In \cite{joneskelsey}, the authors
describe the visual boundary for Diestel-Leader graphs, which are
certain graphs arising from products of regular trees. When
there are more than two trees, the topology is indiscrete, 
but for two trees, the graph inherits enough
structure from its component trees that its visual boundary is an
interesting non-Hausdorff space. 
Since $\dl(2,2)$ (the product of two trees with valence 3) is a Cayley graph for
$L_2$, this provides a boundary for $L_2$ which is dependent on the generating set.
This boundary has a natural partition into two uncountable subsets, 
which we refer to as the upper and lower visual boundaries and 
denote by $\partial_\infty L_2^+$ and $\partial_\infty L_2^-$.
When equipped with the subspace topology, these subsets \emph{are} Hausdorff.

The goal of this paper is to fully describe $\partial_h L_2$ 
where the metric on $L_2$ is the word metric from $S$.
In Section \ref{distancesec} we provide some background on this metric, and
in Section \ref{visualembedssec} we discuss the relationship between
$\partial_\infty L_2$ and $\partial_h L_2$, proving

\begin{theorem*}[A - Corollary \ref{visualembedscor} and 
Observations  \ref{notcontinuousobs} and \ref{subspacesembedobs}]
There is a natural map $\partial_\infty L_2 \rightarrow \partial_h L_2$, which
is injective but not continuous.
When restricted to either $\partial_\infty L_2^+$ or $\partial_\infty L_2^-$,
however, this injection is continuous.
\end{theorem*}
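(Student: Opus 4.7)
The plan is to establish the three claims of the theorem in order, leveraging the explicit description of geodesics in $\dl(2,2)$ developed in Section \ref{distancesec} and the structure of $\partial_\infty L_2$ from the authors' prior work.

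First, to define the natural map $\iota\colon \partial_\infty L_2 \to \partial_h L_2$, I would send an asymptotic class $[\gamma]$ to the Busemann horofunction $h_\gamma$ as in \eqref{horodefeqn}. Existence of the limit follows because, for a fixed $x$, the sequence $d(\gamma(n),x) - d(\gamma(n),b)$ is non-increasing in $n$ (by the triangle inequality along a geodesic ray) and bounded below by $-d(x,b)$. Independence of the chosen representative follows from the fact that two asymptotic geodesic rays in the word metric remain at bounded Hausdorff distance, which forces the two limits to agree.

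For injectivity, I would exploit that two distinct asymptotic classes $[\gamma_1]\neq[\gamma_2]$ in $\dl(2,2)$ ultimately diverge in at least one of the two component trees. Using the test points $\gamma_2(m)$, one computes directly $h_{\gamma_2}(\gamma_2(m)) = -m$, whereas $h_{\gamma_1}(\gamma_2(m))$ cannot decrease linearly in $m$: in the relevant tree the geodesic from $\gamma_1(n)$ to $\gamma_2(m)$ peels off at bounded height, so the corresponding term in the product-distance formula from Section \ref{distancesec} contributes a $+m$ rather than a $-m$. Comparing these two sequences of values separates $h_{\gamma_1}$ from $h_{\gamma_2}$.

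For the failure of continuity, I would construct an explicit sequence $[\gamma_n]\in\partial_\infty L_2^+$ whose visual limit lies in $\partial_\infty L_2^-$; such sequences exist precisely because $\partial_\infty L_2$ is non-Hausdorff across the two strata. The horofunctions $h_{\gamma_n}$ are controlled by the height function, which is positive along upper rays, so any subsequential limit in $\partial_h L_2$ must itself reflect this positive-height behavior and therefore cannot equal $\iota$ of a lower visual limit. For continuity of the restriction $\iota\big|_{\partial_\infty L_2^\pm}$, I would show that within one stratum the visual topology reduces to convergence of a ray's projection into a single factor tree, along which the height function is monotone; this permits a uniform-on-compacta estimate for $d(\gamma(n),x) - d(\gamma(n),b)$ as $\gamma$ varies, and uniform-on-compacta convergence is convergence in $\partial_h L_2$.

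The main obstacle I expect is the non-continuity step: to produce the witness sequence one must choose rays whose lamp configurations cross over between upper and lower strata in a controlled way, and to rule out convergence to $\iota$ of the visual limit one must identify (or at least bound) the target horofunction. I expect that target to be one of the countably many non-Busemann points interpolating the two halves of $\partial_\infty L_2$ mentioned in the abstract, so locating it requires careful bookkeeping of heights in both trees of $\dl(2,2)$ simultaneously.
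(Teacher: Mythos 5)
Your architecture matches the theorem's three claims, but there is one genuine gap in the first step. You justify well-definedness of $[\gamma]\mapsto \mathfrak b^\gamma$ by asserting that two asymptotic rays ``remain at bounded Hausdorff distance, which forces the two limits to agree.'' That general principle is false for word metrics: in $\integers^2$ with the $\ell_1$ metric, the rays $n\mapsto(n,0)$ and $n\mapsto(n,1)$ stay at distance $1$, yet their Busemann functions based at the origin are $-x+|y|$ and $-x+|y-1|-1$, which disagree (e.g.\ at $(0,2)$). Bounded distance alone never suffices in this $\ell_1$-like setting; what rescues the claim for $\dl(2,2)$ is the much stronger fact, which the paper invokes explicitly (Lemma 3.5 of the authors' earlier paper), that asymptotic geodesic rays in $\dl(2,2)$ \emph{eventually merge}, so the two defining sequences eventually coincide and the limits trivially agree. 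Your injectivity sketch is repairable but currently heuristic: the phrase ``peels off at bounded height, so the term contributes $+m$ rather than $-m$'' is exactly what the distance formula of Lemma \ref{distancelem} delivers (one gets $\mathfrak b^{\gamma_1}(\gamma_2(m)) = m + O(1)$ versus $\mathfrak b^{\gamma_2}(\gamma_2(m)) = -m$ once the classes differ), whereas the paper argues more locally: choose representatives $\alpha\in[\gamma]$, $\alpha'\in[\gamma']$ with \emph{maximal} common initial segment of length $k$ and evaluate at the single point $x=\alpha(k+1)$, obtaining $\mathfrak b^{\alpha}(x)=-(k+1)$ but $\mathfrak b^{\alpha'}(x)=-(k-1)$. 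Note that the maximality over representatives, hence again the merging property, is what makes that one-point evaluation valid.

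On non-continuity your route is genuinely different from the paper's, and your instinct about the obstacle is exactly right: taking $[\gamma_n]\in\partial_\infty L_2^+$ with $m(\gamma_n)\to-\infty$ (lamplighter dips to $-n$, lights the lamp there, then heads up), one has $[\gamma_n]$ converging visually to points of $\partial_\infty L_2^-$, while the paper's Observation \ref{Haccumofbusemannobs} shows $\mathfrak b^{\gamma_n}\to H$, the non-Busemann height function, which by Observation \ref{nonbusemannobs} differs from every Busemann function; so your ``careful bookkeeping'' can be completed. But the paper's actual proof of Observation \ref{notcontinuousobs} sidesteps all of this with a two-line topological argument: a continuous injection into a Hausdorff space would force the domain to be Hausdorff, and $\partial_\infty L_2$ is non-Hausdorff while $C(L_2)\supset\hbdry L_2$ is Hausdorff. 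That argument consumes the injectivity already proved and requires no computation; your route costs more but buys an explicit identification of the limiting horofunction. For continuity on each half, your plan coincides in substance with the paper's proof of Observation \ref{subspacesembedobs}, with one simplification available to you: since the metric is integer-valued and compact subsets of $L_2$ are finite, the sets $B_K(h)$ requiring \emph{exact} agreement on a finite $K$ form a basis, so no uniform epsilon estimate is needed---visual closeness on an initial segment $[0,k]$ forces the lamp stands to agree up to a level dominating $K$, and Equation \ref{busemannposeqn} then gives equality of the Busemann functions on $K$ on the nose.
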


In Section
\ref{modelssec}, we explicitly compute formulas for families of horofunctions,
including Busemann functions. 
It turns out the natural
height map $H: L_2 \rightarrow \integers$ (see Def.  \ref{HmMdef}) is a
non-Busemann horofunction.  
Section \ref{classificationsec} provides a proof that all of the points in
$\partial_h L_2$ are members of the families described in Section 4,
which is our main result.

\begin{theorem*}[B - Corollary \ref{bigcor}]
Every point in $\partial_h L_2$ belongs to one of the following families of horofunctions,
all of whose formulas we explicitly calculate in Section \ref{modelssec}.
\begin{itemize}
\item Busemann: These horofunctions arise from certain sequences of lamp stands 
where the union of positions of lit lamps is bounded below or above 
and the position of the lamplighter limits to positive or negative infinity.
\item spine: These horofunctions arise from certain sequences of lamp stands 
where the union of lit lamps is neither bounded below nor above 
and the position of the lamplighter limits to a finite value.
\item ribs: These horofunctions arise from certain sequences of lamp stands 
where the union of positions of lit lamps is bounded below or above but not both
and the position of the lamplighter limits to a finite value.
\item height: The natural height function and its negation arise as horofunctions from 
certain sequences of lamp stands where the union of lit lamps is neither bounded below nor above 
and the position of the lamplighter limits to positive or negative infinity.
\end{itemize}
The spine is parametrized by $\integers$ and the ribs by a subset of $L_2$, 
and so the set of non-Busemann horofunctions is countable.
\end{theorem*}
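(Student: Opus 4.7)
The plan is a case analysis on the asymptotic behavior of a sequence $(y_n) \subseteq L_2$ defining a horofunction. Each $y_n$ is encoded as a pair $(L_n, p_n)$, with $L_n \subseteq \integers$ the set of lit lamps and $p_n \in \integers$ the lamplighter position. After passing to a subsequence, we may assume that (i) the union $\bigcup_n L_n$ is bounded above, bounded below, both, or neither, and (ii) $p_n$ tends to $+\infty$, $-\infty$, or a finite value in $\integers$. The case where $\bigcup_n L_n$ is bounded on both sides and $p_n$ is bounded is excluded, since $(y_n)$ then lies in a finite set and yields no horofunction. The remaining configurations distribute among the four families as stated: $\bigcup_n L_n$ bounded on at least one side and $p_n \to \pm\infty$ yields a Busemann point; bounded on exactly one side and $p_n$ bounded yields a rib; neither bounded and $p_n$ bounded yields a spine; neither bounded and $p_n \to \pm\infty$ yields a height horofunction. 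Countability of the non-Busemann horofunctions then follows immediately from the stated parametrizations: ribs by a subset of $L_2$, spine by $\integers$, and two further height points.

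For each case, the strategy is to expand $d(y_n, x) - d(y_n, b)$ using the word-length formula of Section \ref{distancesec}: any geodesic from $b$ to $y_n$ must travel from the basepoint's lamplighter position, toggle every lamp in the symmetric difference $L_{y_n} \triangle L_x$ when considering a path through $x$, and terminate at $p_n$, traversing each toggled lamp. Once the lit lamps $L_n$ eventually lie on a fixed side of the region influenced by $x$, the portion of the cost that depends only on $(L_n, p_n)$ cancels against $d(y_n, b)$, and what remains depends only on $x$ and matches one of the explicit horofunctions of Section \ref{modelssec}. The Busemann and ribs cases are the most straightforward, since the asymptotic support of $L_n$ is one-sided, so the essential geometry of an optimal path from $b$ to $y_n$ through $x$ stabilizes for large $n$.

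The main obstacle is the height case. Here neither the lamp configurations nor the lamplighter positions stabilize on either side, so showing that the only possible limits are the explicit horofunctions $\pm H$ requires a delicate cancellation argument: as $n$ grows and new lit lamps appear arbitrarily far above and below, the excess cost of toggling them must be exactly offset by the lamplighter's motion toward $\pm\infty$, leaving only a signed count of lamps in $L_x$ against a reference level, which reconstructs $H$. The spine case has a related subtlety: even though $\bigcup_n L_n$ is unbounded on both sides, the cost contributions from $L_n \setminus L_x$ must asymptotically depend only on the limit height $q$ of $p_n$. Careful bookkeeping of which lamps lie above, between, and below $p_n$ and $p_x$, and of how the geodesic from $b$ to $y_n$ optimally orders its visits to these regions, should handle both subtleties and complete the classification.
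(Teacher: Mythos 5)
Your case table contains a concrete misclassification, so the approach as stated would fail. The side on which the lamps are bounded must be matched to the \emph{direction} in which the lamplighter escapes: when $p_n \to +\infty$, the dichotomy between Busemann and height is governed solely by boundedness \emph{below} of the union of lit lamps (the paper's ``left stability''), and boundedness above is irrelevant. Your rule ``bounded on at least one side and $p_n\to\pm\infty$ yields a Busemann point'' fails on the sequence $y_n$ with the single lamp at position $-n$ lit and lamplighter at $+n$: here the union of lit lamps is bounded above but unbounded below, and Lemma \ref{distancelem} gives, for fixed $x$ and large $n$, $d(y_n,x)=2(n-(-n))-(n-H(x))=3n+H(x)$ and $d(y_n,id)=3n$, so $h_{y_n}=H$, the height function --- not a Busemann point, since no geodesic-ray class has lamps unbounded below with lamplighter at $+\infty$. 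This is exactly the content of Lemmas \ref{busemannposlem} and \ref{heightlem} (and symmetrically \ref{busemannneglem}, \ref{negheightlem}): for $l=+\infty$, left stable gives $\mathfrak b^{+,\gamma}$ and not-left-stable gives $H$, regardless of what happens above.

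The spine case also hides a missing idea that ``careful bookkeeping'' cannot supply. ``Union unbounded on both sides'' does not let you pass to a subsequence along which $m(y_{n_k})\to-\infty$ and $M(y_{n_k})\to+\infty$ \emph{simultaneously}: the two unboundednesses can occur along disjoint index sets (e.g.\ lamp at $+n$ for even $n$, lamp at $-n$ for odd $n$, lamplighter at $0$). Along such an interleaved sequence the quantity $d(y_n,x)-d(y_n,b)$ genuinely alternates between the positive-rib and negative-rib asymptotic forms of Equations \ref{posribeqn} and \ref{negribeqn}, so no accounting of regions produces a limit. The paper's Lemma \ref{noboundslem} resolves this by invoking the standing hypothesis that the horofunction exists: in the interleaved alternative, two complementary subsequences are each one-sided stable, so by Lemma \ref{oneboundlem} the common limit $h$ would be simultaneously a positive rib and a negative rib, which inspection of the two formulas rules out; hence a simultaneously divergent subsequence must exist, and only then does the direct computation give $\mathfrak s^l$. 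Two smaller points are repairable within your framework but should be made explicit: passing to subsequences is legitimate only because every subsequence of a sequence converging in $\overline{L_2}$ has the same limit; and boundedness below of the union does not by itself stabilize the lamp configuration below the limit height $l$ in the rib case (lamps in the bounded region may flicker forever) --- the paper handles this with Lemmas \ref{noflickeringbelowlem} and \ref{noflickeringabovelem}, while in your subsequence approach a pigeonhole extraction (only finitely many configurations below $l$ once the union is bounded below) fills the gap, but it must be stated.
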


We describe the topology of $\partial_h L_2$ in Section \ref{topologysec} by 
determining the accumulation points, leading to the visualization 
in Figure \ref{boundarypic}.
 
\begin{figure}
\includegraphics[scale=.6]{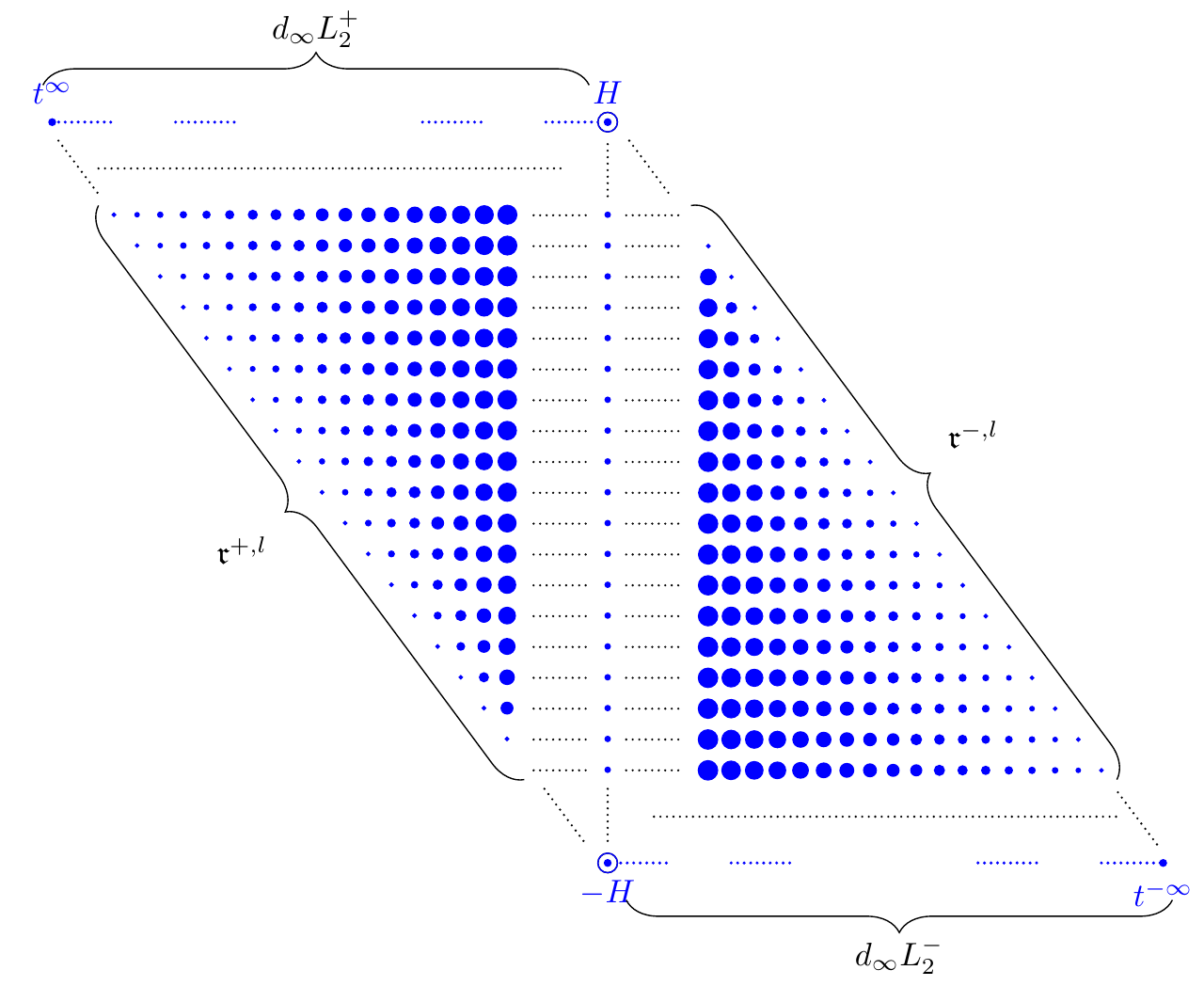}
\caption{Visualization of the horoboundary, including the spine (central
column), ribs (discrete point sets limiting to the corresponding spine point),
and upper and lower visual boundaries. For each rib, the dots of increasing size
represent finite discrete sets whose cardinalities double as we approach the
spine.\label{boundarypic}} 
\end{figure}

The names of the spine and ribs families come from the topology.
The spine family is parametrized by the limiting position of the lamplighter and appears in 
Figure \ref{boundarypic} as the central column of points.
For each spine function, there exist two subfamilies of ribs--
a ``positive rib'' and a ``negative rib''--
each a countable discrete subspace with the spine function
as its only accumulation point.
See the discussion in Section \ref{ribssubsec} for a thorough description
of these subfamilies.
 
Finally, Section \ref{actionsec} deals with some properties of the natural
action of $L_2$ on $\hbdry L_2$, in particular noting that $\pm H$ are global
fixed points.

\section{The Diestel-Leader metric on $L_2$}

\label{distancesec}

Let $d$ denote the word metric on $L_2$ with generating set $S=\{t, at\}$.
Since this is the metric on $L_2$ induced by the Cayley graph 
$\dl(2,2)$, we refer to it as the \emph{Diestel-Leader metric} on $L_2$.
Whenever we refer to $\partial_\infty L_2$ or $\hbdry L_2$, we always mean with $d$.
Stein and Taback have calculated the metric for general Diestel-Leader graphs
\cite{steintaback}, but in our case it is simple enough to review and provide a
proof.

Each element of $L_2$ is associated with a ``lamp stand'', which consists of an
infinite row of lamps in bijective correspondence with $\integers$, finitely
many of which are lit, and a marked lamp indicating the position of the
lamplighter. Figure \ref{lampstandfig} illustrates a typical example. 
The lamps are binary: either on or off. Right multiplying by
$a$ toggles the lamp at the lamplighter's position, while right
multiplying by $t$ increments the position of the lamplighter. We think of
this increment as a ``step right'' as in the figure. Using
$S$, the actions are either ``step (right or left)'' for
$t^{\pm1}$, ``toggle then step right'' for $at$, or ``step left then toggle''
for $(at)^{-1} = t^{-1}a$.

\begin{figure}
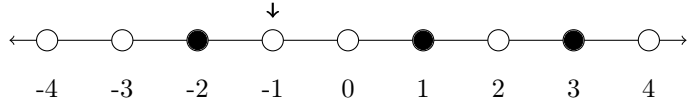

\lampbegin
\lampstand{4}
\labellamps{4}
\unlitlamps{4}
\lightlamps{-2,1,3}{black}
\llpos{-1}
\lampend
\caption{A typical element of $L_2$.} \label{lampstandfig}
\end{figure}

\begin{definition}
\label{HmMdef}
For $g\in L_2$, we define $H(g)$ to be the position of the lamplighter in the
lamp stand representing $g$, or equivalently the exponent sum of $t$ in a
word representing $g$, or the height of $g$ in $\dl(2,2)$. 

We define $m(g)$ to be equal to the minimum position of a lit lamp in the lamp stand representation of $g$ if the set of lit lamps is non-empty, and equal to $+\infty$ otherwise. 
Similarly, we define $M(g)$ to be equal to the maximum position of a lit lamp in the lamp stand representation of $g$ if the set of lit lamps is non-empty, and equal to $-\infty$ otherwise.

For $g_1, g_2 \in L_2$, we define $m(g_1, g_2)$ to be the minimum position of a lamp whose status differs in the lamp stands of $g_1$ and $g_2$ if such a position exists, and equal to $+\infty$ otherwise. Similarly, $M(g_1, g_2)$ is the maximum position of a lamp whose status differs in the lamp stands
of $g_1$ and $g_2$ if such a position exists, and is $-\infty$ otherwise.

We will define ``infinite lamp stands'' to represent boundary elements.
For these lamp stands, we define the $H$, $m$, and $M$ notation analogously.
\end{definition}

\begin{lemma}
\label{distancelem}

If $g_1, g_2\in L_2$, then $d(g_1, g_2)=2(B-A)-C$ where
\begin{itemize}
\item $A = \min\{m(g_1,g_2), H(g_1), H(g_2)\}$ is the left-most position the
lamplighter must visit to change between $g_1$ and $g_2$,
\item $B = \max\{M(g_1,g_2)+1, H(g_1), H(g_2)\}$ is the right-most such
position, and
\item $C = |H(g_2) - H(g_1)|$ is the distance between the lamplighter's
positions in $g_1$ and $g_2$
\end{itemize}
\end{lemma}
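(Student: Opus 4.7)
My plan is to interpret each word over $S^{\pm 1}$ representing $g_1^{-1}g_2$ as a walk of the lamplighter on $\mathbb{Z}$. Each letter moves the lamplighter one step: $t^{\pm 1}$ is a step without toggling, while $at$ toggles the lamp at the lamplighter's current position $p$ and then steps to $p+1$, and $(at)^{-1}=t^{-1}a$ steps from $p+1$ to $p$ and toggles the lamp at $p$. Thus each traversal of the edge between consecutive integers $q$ and $q+1$ is accompanied by an optional toggle of lamp $q$, and the word length equals the total number of edges traversed. In particular, the lamplighter's walk starts at $H(g_1)$ and ends at $H(g_2)$.

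Having set this up, I would establish the lower bound first. For each position $q$ where the lamps of $g_1$ and $g_2$ differ, the lamp at $q$ must be toggled an odd number of times, so the edge between $q$ and $q+1$ must be traversed at least once. Therefore the vertex-range of the walk must contain both $q$ and $q+1$ for every such $q$, and of course it contains $H(g_1)$ and $H(g_2)$. Hence this range contains the interval $[A,B]$. Any walk on $\mathbb{Z}$ from $H(g_1)$ to $H(g_2)$ whose range contains $[A,B]$ must traverse the interval $[A,B]$ at least once (length $B-A$), plus cover the extra detour needed to begin at $H(g_1)$, reach one endpoint, turn around to reach the other endpoint, and then end at $H(g_2)$. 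A case analysis on which endpoint is visited first gives a minimum of $2(B-A)-|H(g_2)-H(g_1)| = 2(B-A)-C$ steps, establishing the lower bound.

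For the matching upper bound, I would construct an explicit word of length $2(B-A)-C$ realizing this minimum. Without loss of generality assume the walk first goes from $H(g_1)$ left to $A$, then right to $B$, then (if necessary) left to $H(g_2)$; the other case is symmetric. Along this walk, on each edge $[q,q+1]$ with $q$ a differing position, we choose exactly one of the (possibly multiple) traversals to use the toggling generator, and on all other edges we use the non-toggling generator $t^{\pm 1}$. The resulting word has the prescribed length, begins at $H(g_1)$, ends at $H(g_2)$, and produces the correct lamp configuration, so it represents $g_1^{-1}g_2$.

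The only real bookkeeping is the degenerate cases: when the set of differing lamp positions is empty (so $m(g_1,g_2)=+\infty$ and $M(g_1,g_2)=-\infty$, making $A$ and $B$ determined purely by $H(g_1), H(g_2)$, and the formula reduces to $|H(g_2)-H(g_1)|$), and when $A=B$ (forcing $g_1=g_2$). I expect the bulk of the work to lie in cleanly stating the interval-covering lemma for walks on $\mathbb{Z}$ and in verifying that the asymmetric definitions $A=\min\{m,H(g_1),H(g_2)\}$ versus $B=\max\{M+1,H(g_1),H(g_2)\}$ correctly capture the fact that toggling lamp $q$ requires visiting both $q$ and $q+1$, whereas the lamplighter positions themselves need only be visited, not exited.
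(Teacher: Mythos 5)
Your proposal is correct and follows essentially the same route as the paper: both model a word for $g_1^{-1}g_2$ as a lamplighter walk that must sweep the interval $[A,B]$ (with the $M+1$ asymmetry arising because toggling lamp $q$ requires traversing the edge between $q$ and $q+1$), first heading away from the terminal height to save $C$. The only difference is one of rigor, not of method: the paper simply describes this walk as the geodesic and computes its length, whereas you make the optimality explicit via the parity/edge-traversal argument and the interval-covering lower bound for walks on $\mathbb{Z}$.
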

See Figure \ref{pathfig} for an illustration of a typical path.

\begin{figure}
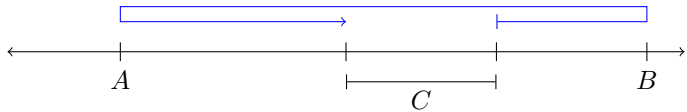

\vspace{.125in}
\lampbegin
\lampstand{4}
\llmark{-3}{$A$}
\llmark{0}{$$}
\llmark{2}{$$}
\llmark{4}{$B$}
\draw [|-|] (0, -\llya) --  node [below] {$C$} (2,-\llya);
\llpath{(2,\llya) (4,\llya) (4,\llyb) (-3,\llyb) (-3,\llya)
(0,\llya)}{\llpathcolor}
\lampend
\caption{Distance between two elements of $L_2$ with Diestel-Leader metric.
}
\label{pathfig}
\end{figure}

\begin{proof}
Since the Cayley graph is vertex transitive, without loss of generality we may
assume that $g_1=id$ and we denote $g_2$ simply by $g$. We will consider a
geodesic from $id$ to $g$ on the lamp stand representations of the elements of
$L_2$.

A geodesic will start at $id$ with no lit lamps and the lamplighter at position
$H(id) = 0$. The lamplighter must move in one direction (either left or right)
until it has gone as far as it needs to, it then travels to the other extremal
position, and then finishes by moving to $H(g)$. The initial direction will be \emph{away} from $H(g)$ in order to minimize the total distance. Notice
that the minimum extremal position is given by $A$, which in this case is
$A=\min\{m(g), H(g), 0\}$, and the maximal extremal position is given by
$B$, which in this case is $B=\max\{M(g)+1, H(g), 0\}$. Notice that we use
$M(g)+1$ and not $M(g)$ since to turn on the lamp at position $k$, the lamplighter
 must be at position $k+1$ either immediately before turning on lamp $k$
(if using generator $(at)^{-1}$) or immediately after (if using generator
$at$).

Thus, the second of the three segments of the geodesic will have length $B-A$.
The lengths of the first and third segments will sum to less than $B-A$, and
the amount less will be exactly equal to the distance between the starting and
ending position, which in our case is $|H(g)|$.
\end{proof}

\section{Busemann points}

\label{visualembedssec}

\subsection{The visual boundary}
\label{busemannlampsec}

As in \cite[Section 3.3]{joneskelsey}, we can interpret elements of the visual
boundary in terms of the lamp stand model. Such an element can be represented
by a geodesic ray emanating from the identity which follows a sequence of steps
wherein the lamplighter first moves one direction until reaching the extremal
lit lamp in that direction 
then ``turns around''
and marches off towards $\pm \infty$ toggling lamps as necessary. Thus, in the limit there
is either a minimal lit lamp (if any are lit at all), and the lamplighter is at
$+\infty$; or there is a
maximal lit lamp (if any are lit at all), and the lamplighter is at $-\infty$.
A ``turning around'' only occurs if the minimal lit lamp has negative index in the
former case, or the maximal lit lamp has positive index in the latter. 
This final configuration of lit lamps gives an ``infinite lamp stand'' for the geodesic ray.

In \cite[Observations 4.10 and 4.11]{joneskelsey}
the authors investigate the visual boundary of 
$\dl(2,2)$ and find that as a set, it is a disjoint union of the
sets $\partial_\infty L_2^+$ and $\partial_\infty L_2^-$,
where
$\partial_\infty L_2^\pm$ is the set of those asymptotic classes 
with lamplighter at $\pm\infty$.
These two sets both have the subset topology of punctured Cantor sets,
but the full visual boundary is not Hausdorff. 
We provide the intuition here.

By \cite[Lemma 3.5]{joneskelsey} in $L_2$ geodesic rays that are asymptotic eventually merge.
For example, if a ray has the lamplighter go from $0$ to $-n$ and then in 
the positive direction forever, the lamps from $-n$ to $0$ will be traversed twice.
Therefore, the initial setting of lamps on the first pass can be re-done on the
second pass.
The asymptotic class of the ray includes all the different initial settings that become
the same final setting when the lamplighter moves in its final direction.
Thus, the infinite lamp stand of a ray is actually an invariant of its asymptotic class.

Notice that such a ray that has the lamplighter go from $0$ to $-n$ and then in 
the positive direction forever is in $\partial_\infty L_2^+$, but is close
in the visual boundary topology to rays in $\partial_\infty L_2^-$ that have 
the lamplighter only move in the negative direction and agree on the initial
settings of the lamps $0$ through $-n$.
The fact that these initial settings can be made arbitrary within the
asymptotic equivalence class
gives us a large subset of $\partial_\infty L_2^-$ that is contained in a neighborhood
of \emph{any} ray in $\partial_\infty L_2^+$ where the lamplighter moves in the
negative direction for a long time before eventually moving in the positive direction
forever.

Thus, there exist distinct elements of $\partial_\infty L_2^+$ 
whose neighborhoods always intersect,
and that intersection is a subset of $\partial_\infty L_2^-$.
Therefore $\partial_\infty L_2$ is not Hausdorff.
Recall that both $\partial_\infty L_2^\pm$ are punctured 
Cantor sets under the subspace topology.
So, while the subspace topologies of these ``halves''
are Hausdorff, they are not compact.
The full visual boundary $\partial_\infty L_2$ is, however,
compact, since these troublesome open sets
that intersect both $\partial_\infty L_2^\pm$ ``fill''
the punctures with open sets in the opposite half.

\subsection{The visual boundary as a subset of the horoboundary}

We now show that there is a natural injection from the non-Hausdorff
$\partial_\infty L_2 =\partial_{\infty} \dl(2,2)$ into 
$\partial_h L_2$.
Since $\partial_h L_2$ is Hausdorff, this injection is non-continuous.

\begin{lemma}[Lemma 8.18(1) in Chapter II.8 of \cite{bh}]

Let $\gamma$ be a geodesic ray in $\dl(2,2)$ based at the identity.
Then the sequence of points $(\gamma(n))$ defines a horofunction $\mathfrak b^\gamma$.

\end{lemma}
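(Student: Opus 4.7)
The plan is to establish pointwise convergence of the defining sequence, so that the horofunction $\mathfrak b^\gamma$ is well-defined by equation \eqref{horodefeqn} with $y_n = \gamma(n)$ and basepoint $b = \mathrm{id}$. Since the statement is essentially the classical Busemann-function construction (and indeed the citation to Bridson–Haefliger indicates the proof is standard), I would just transcribe that argument in our notation, noting that it uses nothing about $\dl(2,2)$ beyond the fact that $d$ is a metric and $\gamma$ is an isometric embedding of $\naturals$.

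Concretely, fix $x \in L_2$ and set
\[
f_n(x) \;=\; d(\gamma(n), x) - d(\gamma(n), \mathrm{id}) \;=\; d(\gamma(n), x) - n,
\]
using that $\gamma$ is a geodesic ray based at $\mathrm{id}$. First I would observe that $(f_n(x))$ is non-increasing in $n$: by the triangle inequality together with $d(\gamma(n), \gamma(n+1)) = 1$,
\[
d(\gamma(n+1), x) \;\le\; d(\gamma(n+1), \gamma(n)) + d(\gamma(n), x) \;=\; 1 + d(\gamma(n), x),
\]
so $f_{n+1}(x) \le f_n(x)$. Next I would observe that $(f_n(x))$ is bounded below, again by the triangle inequality: $n = d(\gamma(n), \mathrm{id}) \le d(\gamma(n), x) + d(x, \mathrm{id})$, which rearranges to $f_n(x) \ge -d(x, \mathrm{id})$.

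A monotone bounded real sequence converges, so the limit
\[
\mathfrak b^\gamma(x) \;=\; \lim_{n \to \infty} \bigl( d(\gamma(n), x) - d(\gamma(n), \mathrm{id}) \bigr)
\]
exists for every $x \in L_2$. This is exactly the defining condition in \eqref{horodefeqn}, so $\mathfrak b^\gamma$ is an element of the horofunction compactification; to see it lies in $\hbdry L_2$ rather than in $i(L_2)$ one notes that $d(\gamma(n), \mathrm{id}) = n \to \infty$, so $\gamma(n)$ leaves every finite ball. There is really no obstacle here—the argument is two lines of the triangle inequality plus monotone convergence—so the only ``work'' is making sure the bookkeeping matches the conventions set up in the introduction (base point $\mathrm{id}$, $\naturals$-indexed rays, and the sign convention in \eqref{horodefeqn}).
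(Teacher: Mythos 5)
Your proof is correct and is essentially the paper's own approach: the paper states this lemma by citation to Bridson--Haefliger without reproducing a proof, and what you have written is exactly that standard Busemann-function argument (monotone non-increasing plus bounded below via the triangle inequality, then monotone convergence, with pointwise convergence sufficing since compact subsets of $L_2$ are finite). One small remark: the paper calls any point of $\overline{L_2}$ a horofunction, so your final claim that $\mathfrak b^\gamma$ lies in $\hbdry L_2$ is more than the statement requires; if you want that claim airtight, note that $\mathfrak b^\gamma(\gamma(k)) = -k$ is unbounded below, whereas each $i(z)$ satisfies $i(z)(x) \geq -d(z,\mathrm{id})$, so $\mathfrak b^\gamma \notin i(L_2)$.
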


The horofunction $\mathfrak b^\gamma$ is called the \emph{Busemann function} associated to
$\gamma$. In a CAT(0) space, the Busemann functions of two rays are equal if and
only if those two rays are asymptotic. Even though $\dl(2,2)$ is not CAT(0), the same is true in our case. 

\begin{lemma}
\label{visualembedslemma}
Let $\gamma, \gamma'$ be geodesic rays in the Cayley graph of $L_2$ based at the identity. 
The Busemann functions $\mathfrak b^\gamma$ and $\mathfrak b^{\gamma'}$ are equal if and only if $\gamma$ and $\gamma'$ are asymptotic to each other.

\end{lemma}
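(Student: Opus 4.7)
The plan is to prove the two directions separately, using the merging of asymptotic rays for $(\Leftarrow)$ and the distance formula of Lemma \ref{distancelem} for the contrapositive of $(\Rightarrow)$.

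For $(\Leftarrow)$, the key fact is that asymptotic geodesic rays in $\dl(2,2)$ eventually merge. Specifically, by the discussion in Section \ref{busemannlampsec} (using \cite[Lemma 3.5]{joneskelsey}), two asymptotic rays $\gamma,\gamma'$ share the same direction ($\pm\infty$), the same asymptotic lamp stand $\sigma$, and, for each to be a geodesic, the same extremal lamplighter position determined by $\sigma$. They can differ only in how the lamps between the starting position and that extreme are toggled during the initial excursion; once that excursion is completed, the two rays pass through identical lamp stands, so $\gamma(n)=\gamma'(n)$ for all $n$ larger than some $N$. Equality of the Busemann functions then follows immediately from the defining limit $\mathfrak b^\gamma(x)=\lim_n [d(\gamma(n),x)-n]$.

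For $(\Rightarrow)$, I argue the contrapositive. By the description of $\partial_\infty L_2$ in Section \ref{busemannlampsec}, non-asymptotic rays are distinguished either by (a) lying in opposite halves $\partial_\infty L_2^\pm$, or by (b) sharing a half but having distinct asymptotic lamp stands $\sigma\neq\sigma'$ disagreeing at some $j\in\integers$. In each case I will apply Lemma \ref{distancelem} to $d(\gamma(n),x)$ and $d(\gamma'(n),x)$, pass to the limit, and exhibit a test element $x\in L_2$ at which the two values disagree. The controlling observation is that for $\gamma\in\partial_\infty L_2^+$, the $B$ in Lemma \ref{distancelem} stabilizes at $B=H(\gamma(n))$ because the rightmost lit lamp of $\gamma(n)$ never exceeds the lamplighter's position, and symmetrically $A$ stabilizes for $\gamma\in\partial_\infty L_2^-$. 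Thus $\mathfrak b^\gamma(x)$ has an explicit limit formula in terms of $H(x)$, $|p|$ (the magnitude of the leftmost reach), and the minimum (resp.\ maximum) position at which $\sigma$ and the lamp stand of $x$ disagree. In case (a) a test element $x=t^{\pm K}$ with $K$ large typically distinguishes, since the sign of $H(\gamma(n))$ and $H(\gamma'(n))$ pair oppositely with the $C$-term; when it does not, a single-lamp element such as $x=a$ distinguishes via the $B$-term. In case (b), one selects $x$ whose lamplighter sits strictly to the right of $j$ and whose lamp configuration is chosen so that the minimum differing lamp position falls on opposite sides of $j$ for $\sigma$ versus $\sigma'$.

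The main obstacle is case (b): neither $x=a^{t^j}$ nor $x=t^k$ always works alone, because the truncation $\min(m(\sigma,x),H(x))$ in the limit formula can mask the discrepancy at $j$ whenever $\sigma$ or $\sigma'$ has other lit lamps below $j$. The fix is to choose the lamp configuration of $x$ to agree with $\sigma$ on the relevant positions below $j$, so that the \emph{first} disagreement between $x$ and $\sigma$ is forced to occur exactly at $j$; then $m(\sigma,x)$ and $m(\sigma',x)$ land on different sides of $j$ and the two Busemann values differ by a nonzero amount. A short case analysis on the configurations of $\sigma,\sigma'$ near $j$ (and the parallel analysis for $\partial_\infty L_2^-$, using $M$ in place of $m$) completes the proof.
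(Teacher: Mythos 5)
Your backward direction coincides with the paper's: asymptotic rays based at the identity eventually merge, so their Busemann functions agree. Your forward direction, however, is a genuinely different route. The paper does not touch the distance formula at this point: it chooses representatives $\alpha\in[\gamma]$, $\alpha'\in[\gamma']$ whose shared initial segment is maximal, say of length $k$, and evaluates both functions at the single branch point $x=\alpha(k+1)$, obtaining $\mathfrak b^{\alpha}(x)=-(k+1)$ but $\mathfrak b^{\alpha'}(x)=-(k-1)$. That argument is three lines and needs no case analysis (though it quietly relies on the existence of a maximizing pair of representatives, which your route avoids). You instead derive the explicit limit formulas from Lemma \ref{distancelem} --- in effect Equations \ref{busemannposeqn} and \ref{busemannnegeqn}, which the paper only establishes later in Section \ref{modelssec} --- and separate the functions by test elements. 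This buys more than the lemma asks for: it is essentially the injectivity of the Busemann family asserted in Observation \ref{nonbusemannobs}. For your case (a), a uniform test is cleaner than juggling $t^{\pm K}$ and $a$ (which, as you note, can individually fail): take $x=\gamma(n)$, since $\mathfrak b^{+,\gamma}(\gamma(n))=-n\to-\infty$ while $\mathfrak b^{-,\gamma'}(\gamma(n))\geq H(\gamma(n))-2\max\{M(\gamma')+1,0\}\to+\infty$.

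In case (b), though, the step as stated is false, and this is a genuine (if repairable) gap. You claim that once $x$ agrees with $\sigma$ below $j$ and first disagrees with $\sigma$ exactly at $j$, so that $m(\sigma,x)=j<m(\sigma',x)$, the two Busemann values must differ. But you treated the base-normalization term as common to both functions, and it is not: in $\mathfrak b^{+,\sigma}(x)=2(\min\{m(\sigma),0\}-\min\{m(\sigma,x),H(x)\})+H(x)$ the quantity $\min\{m(\sigma),0\}$ versus $\min\{m(\sigma'),0\}$ can shift by exactly the same amount as the term you engineered, cancelling it. Concretely, let $\sigma$ have a single lit lamp at $-3$ and $\sigma'$ a single lit lamp at $-1$, so $j=-3$; your prescription forces $x$ to be lamp-free, say with $H(x)=5$, giving $m(\sigma,x)=-3$ and $m(\sigma',x)=-1$ as you want, yet $\mathfrak b^{+,\sigma}(x)=2(-3-(-3))+5=5$ and $\mathfrak b^{+,\sigma'}(x)=2(-1-(-1))+5=5$. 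The cancellation occurs precisely when $j=m(\sigma)<0$ and $m(\sigma')<0$. The repair stays inside your framework: in that subcase anchor $x$ to $\sigma'$ at $j$ instead, i.e.\ light lamp $j$ on $x$, so that $m(\sigma',x)=j$ while $m(\sigma,x)>j$; with the same data this gives $\mathfrak b^{+,\sigma}(x)=2(-3-5)+5=-11$ and $\mathfrak b^{+,\sigma'}(x)=2(-1-(-3))+5=9$. So your deferred ``short case analysis'' is not mere bookkeeping --- it must detect this subcase and swap which of $\sigma,\sigma'$ the test element matches at $j$ --- but once it does, your argument goes through, with the mirror analysis using $M$ in place of $m$ for $\partial_\infty L_2^-$.
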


\begin{proof}
Recall that asymptotic rays in $\dl(2,2)$ eventually merge. 
Thus, the Busemann functions of asymptotic rays are equal.

Now suppose that $\gamma$ and $\gamma'$ are \emph{not} asymptotic to each other.  Let
$\alpha\in[\gamma], \alpha'\in[\gamma']$ (i.e. $\alpha$ is in the asymptotic equivalence class of $\gamma$) so that $\alpha$ and $\alpha'$ have
maximal shared initial segment.  Say that this shared initial segment has length
$k$.  Let $x=\alpha(k+1)$.  Notice that by definition, $\mathfrak b^\alpha(x) = -(k+1)$.
By our choice of $\alpha$ and $\alpha'$, $\mathfrak b^{\alpha'}(x) = -(k-1)$, so
$\mathfrak b^\alpha
\neq \mathfrak b^{\alpha'}$.  By the proof above of the other direction,
$\mathfrak b^\gamma=\mathfrak b^\alpha$ and $\mathfrak b^{\gamma'}=\mathfrak b^{\alpha'}$ and we are done.  
\end{proof}

\begin{corollary}
\label{visualembedscor}
The relation taking an asymptotic equivalence class of geodesic rays based at
the identity to
their Busemann functions is an injection of $\partial_\infty L_2$ into
$\partial_h L_2$.
\end{corollary}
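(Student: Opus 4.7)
The plan is to upgrade the relation ``$[\gamma] \mapsto \mathfrak b^\gamma$'' into a well-defined injective map, for which essentially all the work is already packaged in Lemma \ref{visualembedslemma}. Given an asymptotic class $c \in \partial_\infty L_2$, pick any representative geodesic ray $\gamma$ based at the identity and set $\Phi(c) := \mathfrak b^\gamma$. To see $\Phi$ is a function rather than merely a relation, I would invoke the ``if'' direction of Lemma \ref{visualembedslemma}: any two such representatives $\gamma, \gamma'$ of $c$ are asymptotic, so $\mathfrak b^\gamma = \mathfrak b^{\gamma'}$, and hence $\Phi(c)$ does not depend on the choice of representative.

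Next I would verify that $\Phi(c)$ actually lies in $\partial_h L_2 = \overline{L_2} \setminus i(L_2)$, rather than in the image of the canonical embedding $i$. Since $\gamma$ is a geodesic ray, the values $\mathfrak b^\gamma(\gamma(n)) = d(\gamma(n),\gamma(n)) - d(\gamma(n),\mathrm{id}) = -n$ are unbounded below, whereas for any $z \in L_2$ the function $i(z)(x) = d(x,z) - d(x,\mathrm{id})$ is bounded below by $-d(z,\mathrm{id})$ (by the reverse triangle inequality). So $\mathfrak b^\gamma$ cannot equal $i(z)$ for any $z$, and $\Phi$ does map into $\partial_h L_2$.

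Finally, injectivity of $\Phi$ is exactly the ``only if'' direction of Lemma \ref{visualembedslemma}: if $\Phi([\gamma]) = \Phi([\gamma'])$, then $\mathfrak b^\gamma = \mathfrak b^{\gamma'}$, so $\gamma$ and $\gamma'$ are asymptotic and $[\gamma] = [\gamma']$. There is no real obstacle here; the substantive content lives in Lemma \ref{visualembedslemma} (and, earlier, in the fact from \cite{joneskelsey} that asymptotic rays in $\dl(2,2)$ eventually merge), and this corollary is simply its repackaging in the language of a map $\partial_\infty L_2 \to \partial_h L_2$.
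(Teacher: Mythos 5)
Your proposal is correct and matches the paper's (implicit) argument exactly: the corollary is stated without separate proof precisely because well-definedness and injectivity are the two directions of Lemma \ref{visualembedslemma}, just as you say. Your extra check that $\mathfrak b^\gamma$ avoids $i(L_2)$ (via unboundedness below versus the bound $i(z)(x)\geq -d(z,\mathrm{id})$) is a sound verification of a point the paper delegates to the cited Lemma 8.18(1) of \cite{bh}.
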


\begin{observation}
\label{notcontinuousobs}

The injection in Corollary \ref{visualembedscor} is not continuous.

\end{observation}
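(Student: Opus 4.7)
The plan is to use the elementary topological fact that a continuous injection into a Hausdorff space forces its domain to be Hausdorff: disjoint open neighborhoods of $f(y_1), f(y_2)$ in the target pull back under $f$ to disjoint open neighborhoods of $y_1, y_2$ in the domain. The injection of Corollary \ref{visualembedscor} is already in hand, so it suffices to verify (i) $\partial_h L_2$ is Hausdorff, and (ii) $\partial_\infty L_2$ is not. Contrapositively, assuming continuity would force $\partial_\infty L_2$ to be Hausdorff, contradicting (ii).

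Fact (i) is immediate from the construction reviewed in the introduction: $\partial_h L_2$ sits inside the quotient of $C(L_2)$ by constants with the topology of uniform convergence on compact sets, which is Hausdorff, and Hausdorffness passes to subspaces. Fact (ii) is exactly what Section \ref{busemannlampsec} establishes about the visual boundary: there are distinct rays in $\partial_\infty L_2^+$ (those whose lamplighter first moves far to the left before turning around) that cannot be separated by open neighborhoods, since any neighborhood of either contains a common large piece of $\partial_\infty L_2^-$. Combining (i) and (ii) gives the observation as a one-line contradiction.

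If an explicit witness is preferred, one can let $\gamma_n \in \partial_\infty L_2^+$ be the geodesic ray whose lamplighter first travels to position $-n$ and then marches off to $+\infty$ without ever toggling a lamp, and let $\eta \in \partial_\infty L_2^-$ be the ray that moves straight to $-\infty$ with no lamps lit. The rays $\gamma_n$ converge to $\eta$ in the visual topology, since their initial segments agree with $\eta$ on arbitrarily long prefixes as $n$ grows. A short computation via Lemma \ref{distancelem}, however, shows that $\mathfrak b^{\gamma_n}$ evaluated at any element of positive height is dominated by a term tending to $-\infty$ in $n$, while $\mathfrak b^\eta$ takes a finite value at the same test point; hence $\mathfrak b^{\gamma_n} \not\to \mathfrak b^\eta$ in $\partial_h L_2$.

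The only real obstacle is a matter of taste: choosing between the abstract Hausdorff argument, which is cleanest but relatively uninformative, and the concrete sequence, which makes visible \emph{why} asymptotic information about how a ray behaves before turning around gets remembered by its Busemann function but forgotten by its visual class. Either route is short once Lemma \ref{visualembedslemma} and the non-Hausdorff discussion of Section \ref{busemannlampsec} are cited.
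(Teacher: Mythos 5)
Your primary argument coincides with the paper's proof, which disposes of the observation in one line: a continuous injective image of the non-Hausdorff space $\partial_\infty L_2$ would itself be non-Hausdorff, while $C(L_2)$ and hence its subspace $\hbdry L_2$ is Hausdorff. On that route you are correct and essentially verbatim in agreement with the paper.

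Your optional ``explicit witness,'' however, is wrong as stated. A path whose lamplighter travels to position $-n$ and then marches to $+\infty$ \emph{without ever toggling a lamp} is not a geodesic ray: after $n+j$ steps it sits at $t^{j-n}$ with no lamps lit, at distance $n-j$ from the identity (indeed it returns to the identity at step $2n$). As \S\ref{busemannlampsec} notes, a geodesic only ``turns around'' when the minimal lit lamp of its limiting lamp stand has negative index, so with no lamps lit there is no turning; your $\gamma_n$ all collapse to (non-geodesic approximations of) the single class $[t^{\infty}]$, and in particular they do not form a family converging to $\eta$ in the visual topology. The claimed computation also fails: if you repair the witness by having $\gamma_n$ light the lamp at $-n$ before marching up (which one checks is geodesic via Lemma \ref{distancelem}), then $m(\gamma_n)=-n$ and Equation \ref{busemannposeqn} gives $\mathfrak b^{\gamma_n}(g)=H(g)$ for each fixed $g$ once $n$ is large --- the values stabilize at a finite number rather than being ``dominated by a term tending to $-\infty$.'' The corrected witness does yield discontinuity, but through Observation \ref{Haccumofbusemannobs}: $\gamma_n\to\eta$ visually (initial segments agree with $\eta$ for $n-1$ steps), while $\mathfrak b^{\gamma_n}\to H\neq\mathfrak b^{\eta}$, and limits are unique in the Hausdorff space $\hbdry L_2$. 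Since the witness was offered only as an alternative, your proof stands on its first paragraph; the concrete addendum should either be deleted or repaired as above.
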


\begin{proof}
The continuous injective image of a non-Hausdorff space like $\partial_\infty L_2$ must also be non-Hausdorff, while $C(L_2)$ (and thus its subspace $\hbdry L_2$) is Hausdorff.
\end{proof}

Recall that the non-Hausdorff property was proved by finding neighborhoods of
distinct elements of $\partial_\infty L_2^+$ that always shared elements of
$\partial_\infty L_2^-$.
Observation \ref{subspacesembedobs} shows that the restriction of
this injection to either of the subspaces of the visual boundary $\partial_\infty L_2^\pm$
is continuous.

\section{Model horofunctions}

\newcommand{\ignore}[1]{}
\label{modelssec}
In this section, we construct four families of ``model'' horofunctions, and in
\S\ref{classificationsec} we show that these represent all horofunctions.

We break $\partial_h L_2$ into four categories: the Busemann points, the
\textit{spine}, the \textit{ribs}, and the two points $\pm H$.  The
reader may refer to Figure \ref{boundarypic} on page \pageref{boundarypic}
to preview a visualization of the boundary, illustrating our choice of terms.
To determine which category a sequence $(x_n)$ in $L_2$ falls into (if it
defines a horofunction at all), it turns out we need only consider 
whether $H(x_n)$ approaches an integer or $\pm\infty$, and whether the union
over all lit lamps in the sequence is bounded above or below. 

\subsection{The Spine}
\label{spinesubsec}

Fix $l\in\integers$, and let $(s_n^l), n\in \mathbb{N}$, be the sequence in
$L_2$ having lamps at $\pm n$ lit, all others unlit, and $H(s_n^l) =
l$.

Applying Lemma \ref{distancelem} with $A = -n$, $B = n+1$, and $C = |l|$,
we have  $d(s_n^l, id) = 4n + 2 - |l|$. 
Given any $g \in L_2$, take $n > \max\{-m(g), M(g), |H(g)|, |l|\}$ and 
apply Lemma \ref{distancelem} to obtain $d(s_n^l, g) = 4n + 2 - |l - H(g)|$.
By Equation \ref{horodefeqn}, the horofunction is
\begin{equation}
\label{spineeqn}
\mathfrak{s}^l(g)  = h_{s_n^l}(g) = |l| - |l - H(g)|. 
\end{equation}

We call this the \textit{spine horofunction at height} $l$. For a given $l$,
this is a function of only $H(g)$.
Figure \ref{hfigs} shows the graphs of $\mathfrak{s}^{-1}$, $\mathfrak{s}^{0}$,
$\mathfrak{s}^1$, and $\mathfrak{s}^{2}$, respectively, as functions of height.
The spine horofunction at height 0 is $\mathfrak{s}^0 = -|H(g)|$. One can check
that the sequence $(s_n^n)$ yields $H(g)$ and $(s_n^{-n})$ yields
$-H(g)$; and we can see that the spine functions interpolate between between the
two. 

\begin{figure}
\begin{subfigure}[t]{.5\linewidth}
\centering
\scalebox{.6}{
\begin{tikzpicture}
\draw [gray,very thin,step=.5] (-4,-2) grid (4,2);
\draw [<->] (-4.1,0) -- (4.1,0);
\draw [<->] (0,-2.1) -- (0,2.1);
\foreach \x in {-4,...,4} \draw (\x,2pt) -- (\x,-2pt) node [below] {$\scriptstyle
\x$};
\draw [blue, domain=-4:2,samples=400] plot (\x, {1-abs(\x+1)});
\end{tikzpicture}}
\caption{$\mathfrak{s}^{-1}$.} \label{s^-1fig}
\end{subfigure}%
\centering
\begin{subfigure}[t]{.5\linewidth}
\centering
\scalebox{.6}{
\begin{tikzpicture}
\draw [gray,very thin,step=.5] (-4,-2) grid (4,2);
\draw [<->] (-4.1,0) -- (4.1,0);
\draw [<->] (0,-2.1) -- (0,2.1);
\foreach \x in {-4,...,4} \draw (\x,2pt) -- (\x,-2pt) node [below] {$\scriptstyle
\x$};
\draw [blue, domain=-2:2,samples=400] plot (\x, {-abs(\x)});
\end{tikzpicture}}
\caption{$\mathfrak{s}^0$.} \label{s^0fig}
\end{subfigure}

\begin{subfigure}[t]{.5\linewidth}
\centering
\scalebox{.6}{
\begin{tikzpicture}
\draw [gray,very thin,step=.5] (-4,-2) grid (4,2);
\draw [<->] (-4.1,0) -- (4.1,0);
\draw [<->] (0,-2.1) -- (0,2.1);
\foreach \x in {-4,...,4} \draw (\x,2pt) -- (\x,-2pt) node [below] {$\scriptstyle
\x$};
\draw [blue, domain=-2:4,samples=400] plot (\x, {1-abs(\x-1)});
\end{tikzpicture}}
\caption{$\mathfrak{s}^1$.} \label{s^1fig}
\end{subfigure}%
\begin{subfigure}[t]{.5\linewidth}
\centering
\scalebox{.6}{
\begin{tikzpicture}
\draw [gray,very thin,step=.5] (-4,-2) grid (4,2);
\draw [<->] (-4.1,0) -- (4.1,0);
\draw [<->] (0,-2.1) -- (0,2.1);
\foreach \x in {-4,...,4} \draw (\x,2pt) -- (\x,-2pt) node [below] {$\scriptstyle
\x$};
\draw [blue, domain=-2:4,samples=400] plot (\x, {2-abs(\x-2)});
\end{tikzpicture}}
\caption{$\mathfrak{s}^2$.} \label{s^2fig}
\end{subfigure}
\caption{Four ``spinal'' horofunctions, as functions of height of $g\in L_2$.}
\label{hfigs}
\end{figure}
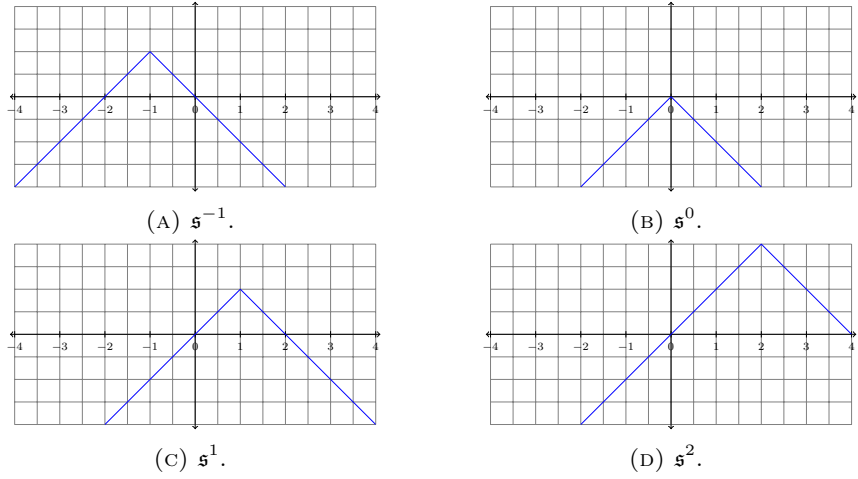

\subsection{The Ribs}
\label{ribssubsec}

The rib horofunctions will be parameterized by certain elements of $L_2$. There
are two subfamilies, corresponding to  the $+\infty$ and $-\infty$ direction, and the
generating set $\{t,at\}$ creates a slight asymmetry between them. Let $f \in
L_2$, and set $l = H(f)$.

First, assume $M(f) < l$ (noting that $M(f)$ may equal
$-\infty)$. Then consider the sequence $(r^{+,f}_n),\ n\geq
l$, in $L_2$ where the lamps of $r^{+,f}_n$ agree with those of $f$ in each
position below $l$, $H(r^{+,f}_n) = l$, lamp $n$ is lit, and no other
lamps at positions $l$ or above are lit.  

Given $g\in L_2$, take $n$ large enough, and we have:

\begin{align*}
d(r^{+,f}_n, id) & = 2((n+1) - \min\{m(f),l,0\})  - |l|\\
d(r^{+,f}_n, g)  & = 2((n+1) - \min\{m(f,g),H(g),l\}) - |l-H(g)| 
\end{align*}

This yields the \textit{(positive) rib horofunction} corresponding to $f$:

\begin{align}
\label{posribeqn}
\mathfrak r^{+,f}(g) & = 2(\min\{m(f),l,0\} - \min\{m(f,g),H(g), l\}) - |l -
H(g)| + |l|  \nonumber \\
& = 2(\min\{m(f),l,0\} - \min\{m(f,g), H(g), l\}) + \mathfrak s^l(g) 
\end{align}

We can see that if we had chosen an element whose lamps agreed with $f$ below
$l$, but also had lamps in position $l$ or higher lit, defining a
sequence similarly would
lead us to the same horofunction, since we can always toggle lamps at $l$ or
above ``for free'' with the generator $at$. 

Though the set of positive rib horofunctions is discrete, there is some
structure to be observed.  Given a height $l$, let $R^+_l$ be the set of
positive rib horofunctions at height $l$. Each corresponds to an element $f \in
L_2$ with $H(f) = l$ and $M(f) < l$. Then the ``minimum lit lamp'' map $m: L_2
\rightarrow \overline{\integers}$ induces a map  $\hat m_l: R^+_l \rightarrow
\overline{\integers}$.  For $k \in \overline{\integers}$, the cardinality of
$\hat m_l^{-1}(k)$ is $2^{(l-k-1)}$ if $k<l$, 1 if $k = +\infty$, and 0
otherwise.  The set $R^+_l$ can then be partitioned according to the nonempty
preimages, which provides a natural filtration of $R^+_l$.  Any sequence $(r_n)$ of
horofunctions in $R^+_l$ corresponding to a sequence $(f_n)$ in $L_2$ with
$m(f_n) \rightarrow -\infty$, will approach $\mathfrak s^l$.  We make a precise
argument for this fact in Observation \ref{spineaccumofribsobs}.  

In the special case that $f$ has no lit lamps, then $M(f) = -\infty$ and $m(f) =
+\infty$ and the calculation simplifies. Since $m(f,g) = m(g)$, the
only data is the height $l= H(f)$; and we have:

\begin{align}
\mathfrak r^{+,f}(g) = \mathfrak r^{+,l}(g) 
& = 2(\min\{l,0\} - \min\{m(g),H(g),l\}) - |l - H(g)| + |l|\nonumber \\ 
& = -2\min\{m(g),H(g),l\} - |l - H(g)| + l
\end{align}

As indicated in the preceding paragraph, when there are no lit lamps the
resulting horofunction $\mathfrak r^{+,l}$ is in a sense the farthest positive
rib function of height $l$ from the spine, and we think of it as the \textit{rib
tip} at height $l$.  

We now turn to the negative rib functions, corresponding to those $f$ that
satisfy $m(f) \geq l$ (possibly with
$m(f) = +\infty)$. Note we use ``$\geq$'' now, whereas we used ``$<$'' previously, since
the status of the lamp at $l$ does matter in this direction, since
 using $(at)^{-1}$ will only let us toggle lamps in positions
$l-1$ or lower ``for free''. One can define a corresponding sequence similarly
to the positive direction, except that the lit lamps approach $-\infty$, and
calculate the horofunction:

\begin{align}
\label{negribeqn}
\mathfrak r^{-,f}(g) & = 2(\max\{M(f,g)+1, H(g), l\} - \max\{M(f)+1, l, 0\}) -
|l - H(g)| + |l|\nonumber \\
& = 2(\max\{M(f,g)+1, H(g), l\} - \max\{M(f)+1, l, 0\}) + \mathfrak s^l(g) 
\end{align}

There is a similar simplification in this direction when $f$ has no lit lamps,
so that the horofunction depends only on $l$:

\begin{align}
\mathfrak r^{-,f}(g) = \mathfrak r^{-,l}(g) 
& = 2(\max\{M(g)+1, H(g), l\} - \max\{l,0\}) - |l - H(g)| + |l|\nonumber \\
& = 2\max\{M(g) + 1, H(g), l \} - |l-H(g)| - l 
\end{align}

Finally, the set $R^-_l$ of negative rib horofunctions at height $l$ has a
structure similar to $R^+_l$.

\subsection{Busemann Functions}
Given a geodesic ray $\gamma$ with $\gamma(0)=id$, let $\mathfrak b^\gamma$
denote its horofunction. Let $g\in L_2$.  As discussed in Definition
\ref{HmMdef} and \S \ref{busemannlampsec}, we can define the functions $m$ and
$M$ similarly for $\gamma$. We either have $\gamma \in \partial_\infty L_2^+$
and $m(\gamma)$ and $m(\gamma,g)$ are defined, or $\gamma \in \partial_\infty
L_2^-$ and $M(\gamma)$ and $M(\gamma,g)$ are defined.  The formula for
$\mathfrak b^\gamma$ depends on the direction of $\gamma$, so we use $\mathfrak
b^{+,\gamma} = \mathfrak b^\gamma$ when $\gamma \in \partial_\infty L_2^+$ and
$\mathfrak b^{-,\gamma} = \mathfrak b^\gamma$ when $\gamma \in \partial_\infty
L_2^-$, to be clear.  

When $\gamma \in \partial_\infty L_2^+$, for $n$ large
enough, we apply Lemma \ref{distancelem} to obtain:
\begin{align*}
d(\gamma(n),id)&  = 2(H(\gamma(n))-\min\{m(\gamma),0\}) - H(\gamma(n)) \\
d(\gamma(n),g)&  = 2(H(\gamma(n))- \min\{m(\gamma,g),H(g)\}) + H(g) - H(\gamma(n))
\end{align*}
Thus the Busemann function corresponding to $\gamma$ is given by
\begin{equation}
\label{busemannposeqn}
\mathfrak b^{+,\gamma}(g)  = 
2\left(\min\{m(\gamma),0\} - \min\{m(\gamma,g),H(g)\}\right)+H(g)
\end{equation}
If $\gamma \in \partial_\infty L_2^-$, we can similarly calculate
\begin{equation}
\label{busemannnegeqn}
\mathfrak b^{-,\gamma}(g)=2\left(\max\{M(\gamma,g)+1,H(g)\} -
\max\{M(\gamma)+1, 0\}\right)-H(g)
\end{equation}

Note that the Busemann horofunctions are obtained from the rib
horofunctions by allowing the lamplighter position to approach $+\infty$ or
$-\infty$ as appropriate.  
This is spelled out later in Observation \ref{busemannaccumofribsobs}.

Given any two horofunctions described above, one can find an element $g$ 
of $L_2$ on
which they disagree. Thus we have the following observation.
\begin{observation}
\label{nonbusemannobs}
The horofunctions $\mathfrak s^l$ for $l \in \integers$, $\pm H$, $\mathfrak
r^{+,f}$ for $f \in L_2$ and $M(f) < H(f)$, $\mathfrak r^{-,f}$ for $f \in L_2$
and $m(f) \geq H(f)$, $\mathfrak b^{+,\gamma}$, $\mathfrak b^{-,\gamma}$,
$\gamma \in \partial_\infty L_2$, are all pairwise distinct.
\end{observation}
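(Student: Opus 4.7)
The plan is a pairwise comparison that exhibits, for each pair of horofunctions $h_1, h_2$ in the list, an element $g\in L_2$ with $h_1(g)\neq h_2(g)$. The explicit formulas derived in Sections \ref{spinesubsec}--\ref{ribssubsec} and above make this a matter of checking cases, but some pairs require a well-chosen test element. I would organize the cases as within-family first, then across families. Within families, spines $\mathfrak{s}^{l_1}\neq \mathfrak{s}^{l_2}$ are separated by evaluating at $g=t^l$ where $l$ has the larger absolute value, using that $\mathfrak{s}^l(t^h)$ is a tent function in $h$ with unique peak $|l|$ at $h=l$. Two positive ribs $\mathfrak{r}^{+,f_1}\neq\mathfrak{r}^{+,f_2}$ at the same height $l$ must have lamp configurations below $l$ differing at some position $k<l$; evaluating at $g=f_1$ reveals this through the $m(f_i,g)$ terms (one becomes $+\infty$, the other is $\leq k$). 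Negative ribs are handled symmetrically, and Busemann/Busemann cases are immediate from Lemma \ref{visualembedslemma}.

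For cross-family distinctions the main organizing tool is the ``slope signature'' of the function $h\mapsto$ horofunction$(t^h)$. Reading off the formulas: $\pm H$ has constant slope $\pm 1$; $\mathfrak{s}^l$ is a tent, slope $+1$ then $-1$ with vertex at $h=l$; positive Busemann is a V-shape, slope $-1$ then $+1$; negative Busemann is the mirror; positive ribs have asymptotic slope $-1$ on \emph{both} sides; and negative ribs have asymptotic slope $+1$ on both sides. These signatures immediately separate almost every cross-family pair, since two functions with different slope signatures cannot be equal as functions of $h=H(g)$.

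The main obstacle is the pairs with matching signatures, namely $\mathfrak{r}^{+,f}$ versus $-H$ or versus $\mathfrak{s}^l$, and the symmetric negative versions. Here the spine and $\pm H$ are insensitive to the lamp configuration of $g$, while the rib formula depends on $g$ through $\min\{m(f,g),H(g),l\}$ (respectively $\max\{M(f,g)+1,H(g),l\}$). Thus I would pick $g$ whose lit lamps extend far to the left (or right, for negative ribs): taking a single lit lamp at position $-N$ for $N$ large and $H(g)=0$ forces $m(f,g)\leq -N$, so $\mathfrak{r}^{+,f}(g)$ grows like $2N$, while $\mathfrak{s}^l(g)$ and $-H(g)$ remain bounded. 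This settles the edge case of the ``rib tip'' $\mathfrak{r}^{+,l}$ (where $f$ has no lit lamps and the rib formula agrees with $-H$ on all of $\{t^h\}$), and it also separates any $\mathfrak{r}^{+,f}$ from $\mathfrak{s}^l$. With these cases handled, the remaining cross-family comparisons reduce to direct substitutions; for instance, the positive-Busemann-vs-negative-Busemann case follows by sending $h\to +\infty$, where the slope signatures force divergence to $\pm\infty$ in opposite directions.
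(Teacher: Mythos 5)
Your overall strategy --- restrict each horofunction to the ``$t$-axis'' $\{t^h\}$, separate by slope signature, and use lamp-bearing witnesses for collisions --- is reasonable and in fact more detailed than the paper, which disposes of this observation in one sentence (``one can find an element $g$ of $L_2$ on which they disagree''). But your signature table is wrong for the Busemann functions, and the error propagates. From Equations \ref{busemannposeqn} and \ref{busemannnegeqn}, restricting to $g=t^h$ gives $\mathfrak b^{+,\gamma}(t^h) = |h-m(\gamma)|-|m(\gamma)|$ and $\mathfrak b^{-,\gamma'}(t^h) = |h-(M(\gamma')+1)|-|M(\gamma')+1|$: \emph{both} families restrict to V-shapes (slope $-1$ then $+1$), not mirror images of each other. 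Consequently your ``for instance'' argument that $\mathfrak b^{+,\gamma}$ and $\mathfrak b^{-,\gamma'}$ diverge in opposite directions as $h\to+\infty$ is false (both have slope $+1$ there); indeed whenever $m(\gamma)=M(\gamma')+1$ the two Busemann functions agree on the \emph{entire} $t$-axis, so no axis-based test can separate them. That pair survives in your write-up only because you also cite Lemma \ref{visualembedslemma}, which does cover it, since rays in opposite halves of $\partial_\infty L_2$ are never asymptotic.

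More seriously, the degenerate rays with no lit lamps have $m(\gamma)=+\infty$ (resp.\ $M(\gamma')=-\infty$), and then $\mathfrak b^{+,\gamma}(t^h) = -h$ and $\mathfrak b^{-,\gamma'}(t^h) = h$: constant slope, colliding in signature with $-H$, with $+H$, and with the positive (resp.\ negative) ribs. None of these pairs appears in your list of matched-signature cases, and your far-left-lamp witness does not resolve all of them: for $g$ with a single lit lamp at $-N$ and $H(g)=0$, the lampless positive Busemann function gives $2N$, and so does any $\mathfrak r^{+,f}$ with $\min\{m(f),l,0\}=0$ --- in particular every rib tip $\mathfrak r^{+,l}$ with $l\geq 0$, which moreover agrees with that Busemann function on the whole $t$-axis as well ($\mathfrak r^{+,l}(t^h)=-h$ for every $l$). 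A genuinely different witness is needed here, e.g.\ $g$ with one lit lamp at $p\geq\max\{l,0\}$ and $H(g)=h$ large, for which $\mathfrak b^{+,\gamma}(g)=h-2p\to+\infty$ while $\mathfrak r^{+,l}(g)=-h\to-\infty$. A smaller, fixable slip: your within-family rib test ``evaluate at $g=f_1$'' can fail outright --- take $l=0$, $f_1=id$, and $f_2$ with one lamp at $-3$; then both rib functions vanish at $f_1$ --- although evaluating at $f_2$ succeeds there, and summing the two failure equations shows at least one of the evaluations at $f_1$ or $f_2$ always works. Until the Busemann rows of your table are corrected and the degenerate-Busemann pairs are treated, the case analysis is incomplete.
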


\section{Classification of horofunctions}

\label{classificationsec}

We will now prove that the functions referred to in Observation
\ref{nonbusemannobs} constitute all of $\partial_h L_2$.

\begin{definition}

Given a sequence $(g_n)\subset L_2$, we say that the lamp at position $k$ in the lamp stands of these elements \emph{stabilizes} if there exists $N\in\naturals $ such that the lamp in position $k$ for the lamp stand representing $g_n$ has the same status (i.e. on or off) for all $n>N$.

We say that the lamp at position $k$ is \emph{flickering} if it does not stabilize.

\end{definition}

\begin{definition}

We say that sequence $(g_n)$ of elements of $L_2$ is \emph{right stable} if there
exists $N\in\naturals$ and
$M\in\integers$ such that for all $k>M$, the lamp at position $k$ for the lamp
stand representing $g_n$ has the same status (i.e. on or off) for all $n>N$.
That is, a sequence is right stable if the set of positions of its flickering lamps 
(should any exist) has a maximum.

We define \emph{left stable} similarly.

\end{definition}

\begin{observation}
\label{noboundsubseqobs}

If a sequence $(g_n)\subset L_2$ is not right stable, 
then there exists a subsequence $(g_{n_k})$ such that
the sequence $(M(g_{n_k}))$ is increasing
without bound.

Similarly, if a sequence $(g_n)\subset L_2$ is not left stable,
then there exists a subsequence $(g_{n_k})$ such that the sequence $(m(g_{n_k}))$ is
decreasing without bound.

\end{observation}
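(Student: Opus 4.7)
The plan is to unpack the definition of ``not right stable'' in terms of flickering lamps and then extract a subsequence greedily. By the definition of right stable, a sequence fails to be right stable precisely when the set of flickering lamp positions is non-empty and has no maximum; since this is a subset of $\integers$, ``no maximum'' is equivalent to ``unbounded above.'' So the hypothesis gives us flickering positions at arbitrarily large indices.

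The key observation is that if the lamp at position $k$ is flickering in $(g_n)$, then infinitely many terms of the sequence have that lamp lit. Indeed, if only finitely many $g_n$ had lamp $k$ in the ``on'' state, then past some $N$ the lamp would be constantly ``off,'' contradicting the failure to stabilize. In particular, for any flickering position $k$ there are infinitely many $n$ with $M(g_n) \ge k$.

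With that in hand, I would construct the subsequence by induction. Choose a flickering position $k_1 > 0$ and pick $n_1$ with $M(g_{n_1}) \ge k_1$. Having chosen $n_1 < n_2 < \cdots < n_j$ with $M(g_{n_i})$ strictly increasing, use the hypothesis to select a flickering position $k_{j+1} > M(g_{n_j})$, then pick $n_{j+1} > n_j$ with $M(g_{n_{j+1}}) \ge k_{j+1}$; such an $n_{j+1}$ exists since infinitely many indices satisfy this inequality. The resulting $(M(g_{n_k}))$ is strictly increasing and unbounded. The argument for the left-stable case is entirely symmetric: flickering positions unbounded below force infinitely many $g_n$ to have a lit lamp arbitrarily far left, and the same inductive extraction produces a subsequence with $m(g_{n_k}) \to -\infty$.

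I do not expect any real obstacle here; the only subtlety is being careful that ``flickering'' implies infinitely many occurrences of \emph{both} statuses (not just one), which is what allows us to conclude $M(g_n) \ge k$ infinitely often rather than merely eventually. Once that is in place, the extraction is routine.
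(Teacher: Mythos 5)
Your greedy extraction is sound, but the equivalence you open with is not, and that is where the proof breaks. You claim a sequence fails to be right stable \emph{precisely when} its set of flickering positions is non-empty and unbounded above. Under the paper's formal definition this fails in the direction you need: right stability requires a \emph{single} $N$ working uniformly for all positions $k>M$, so a sequence can fail to be right stable even though every individual lamp stabilizes, i.e.\ with no flickering lamps whatsoever. Concretely, let $g_n$ be the element with exactly one lit lamp, at position $n$, and the lamplighter at $0$. Each fixed position $k$ stabilizes (its lamp is off for all $n>k$), so there are no flickering lamps and your argument never gets started; yet for any candidate pair $(N,M)$, the position $k=\max\{M,N\}+1$ has its lamp lit at $n=k>N$ and unlit at all other $n>N$, so the sequence is not right stable --- and the observation's conclusion must still be delivered for it (here trivially, since $M(g_n)=n$). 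In fairness, the paper's own gloss (``that is, a sequence is right stable if the set of positions of its flickering lamps has a maximum'') makes the same conflation, but the quantifier order in the formal definition, and the later uses of this observation, require the uniform-$N$ reading, and your ``precisely when'' inherits the error in the direction that matters.

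The repair is short, and it is what the paper actually does: bypass flickering lamps and argue directly with $\sup\{M(g_n)\mid n\in\naturals\}$. If this supremum were a finite $M_0\in\integers$, then every lamp at a position $k>M_0$ is off in \emph{every} $g_n$, so $N=0$ and $M=M_0$ witness right stability; contrapositively, not right stable forces $\sup_n M(g_n)=+\infty$. With that in hand your inductive selection goes through verbatim, and in fact more simply, since the step ``flickering implies lit infinitely often'' is no longer needed: having chosen $n_1<\dots<n_j$, any finite initial segment has bounded $M$-values, so the tail still has supremum $+\infty$ and one can pick $n_{j+1}>n_j$ with $M(g_{n_{j+1}})>\max\{M(g_{n_j}),j\}$. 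The symmetric fix (via $\inf_n m(g_n)=-\infty$) handles the left-stable half. So the useful takeaway: your route through flickering lamps proves the statement only for the subclass of non-right-stable sequences whose instability is caused by flickering, whereas the $\sup$-based contrapositive covers the ``moving lamp'' sequences with non-uniform stabilization times as well.
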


\begin{proof}

If the sequence is not right stable, then $\sup\{M(g_n)\ |\
n\in\naturals\} = +\infty$ since if this supremum were a finite value
$M_0\in\integers$, then by setting $N=0$ and $M=M_0$, the sequence would satisfy
the definition for being right stable. The existence of the desired subsequence is then guaranteed.

The proof when the sequence is not left stable is similar.
\end{proof}

\begin{lemma}
\label{noflickeringbelowlem}

Suppose that a sequence $(g_{n})\subset L_2$ with $H(g_n) \to l \in
\integers\cup\{+\infty\}$ is left stable. 
If $(g_n)$ is associated with some horofunction $h_{g_n}$, then 
the set of positions of its flickering lamps (should any exist) 
has a minimum of at least $l$.

\end{lemma}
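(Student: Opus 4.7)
The plan is to argue by contradiction. Suppose the set of flickering positions is nonempty and its minimum is some $k_0 < l$ (the set has a minimum by left stability). I will construct two ``probe'' elements $x_1, x_2 \in L_2$ that differ only in the status of the lamp at position $k_0$ and show that $d(g_n, x_1) - d(g_n, x_2)$ fails to converge. Since this quantity equals $(d(g_n, x_1) - d(g_n, \mathrm{id})) - (d(g_n, x_2) - d(g_n, \mathrm{id}))$, its divergence forces at least one of $h_{g_n}(x_1), h_{g_n}(x_2)$ to fail to exist, contradicting that $(g_n)$ defines a horofunction.

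First I set up notation. By left stability and the minimality of $k_0$, there exist $N_0 \in \naturals$ and a finite set $L$ of integers strictly below $k_0$ such that for every $n > N_0$, the lit lamps of $g_n$ at positions $< k_0$ are exactly $L$. Using $H(g_n) \to l > k_0$, I may enlarge $N_0$ to also ensure $H(g_n) \geq k_0 + 1$ for $n > N_0$. Since lamp $k_0$ flickers, the tail $\{n > N_0\}$ splits into two infinite subsets of indices $I_+$ (where lamp $k_0$ of $g_n$ is on) and $I_-$ (where it is off).

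Now let $x_1$ (resp.\ $x_2$) be the element of $L_2$ with $H(x_i) = k_0 + 1$, lit lamps $L \cup \{k_0\}$ (resp.\ $L$), and no lit lamps at positions $> k_0$. Because $H(x_1) = H(x_2)$, the terms $|H(g_n) - H(x_i)|$ in Lemma \ref{distancelem} cancel, yielding
\[
d(g_n, x_1) - d(g_n, x_2) = 2\bigl[(B_n(x_1) - B_n(x_2)) - (A_n(x_1) - A_n(x_2))\bigr],
\]
where $A_n(x_i), B_n(x_i)$ denote the quantities from Lemma \ref{distancelem} applied to the pair $(g_n, x_i)$. For $n > N_0$, the lamps of $g_n$ and $x_i$ agree at every position $< k_0$, and $x_i$ has no lit lamps at positions $> k_0$; hence $m(g_n, x_i) = k_0$ precisely when $g_n$ and $x_i$ disagree at $k_0$, and otherwise $m(g_n, x_i) \geq k_0 + 1$. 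Combined with $H(g_n), H(x_i) \geq k_0 + 1$, this gives $A_n(x_1) - A_n(x_2) = +1$ on $I_+$ and $-1$ on $I_-$.

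The main obstacle is establishing $B_n(x_1) = B_n(x_2)$, which requires analyzing $M(g_n, x_i)$ in the absence of a right-stability hypothesis. When the set $U_n$ of lit lamps of $g_n$ above $k_0$ is nonempty, both $M(g_n, x_1)$ and $M(g_n, x_2)$ equal $\max U_n$, since $\max U_n > k_0$ dominates any discrepancy at $k_0$. When $U_n = \emptyset$, the two $M$'s differ ($k_0$ versus $-\infty$), but either value, augmented by $+1$, is absorbed by the $\max$ with $H(g_n) \geq k_0 + 1$, so $B_n(x_1) = B_n(x_2) = H(g_n)$ in this subcase as well. With $B_n(x_1) = B_n(x_2)$ for all $n > N_0$, the displayed formula collapses to $d(g_n, x_1) - d(g_n, x_2) = -2(A_n(x_1) - A_n(x_2))$, which equals $-2$ on $I_+$ and $+2$ on $I_-$. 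This sequence does not converge, completing the contradiction.
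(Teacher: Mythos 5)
Your proof is correct and takes essentially the same approach as the paper: both argue by contradiction at the minimal flickering position $k_0 < l$, using a pair of probe elements and Lemma \ref{distancelem} to show that $d(g_n,x_1)-d(g_n,x_2)$ oscillates between two values along the infinitely many indices where lamp $k_0$ is on versus off, contradicting the existence of $h_{g_n}$. The only (cosmetic) difference is the probe pair --- the paper fixes the lamp configuration and moves the lamplighter between heights $k$ and $k+1$, getting a $\pm 1$ oscillation from the $A$ and $C$ terms, while you fix the height at $k_0+1$ and toggle the probe's lamp at $k_0$, getting a $\pm 2$ oscillation from $A$ alone after checking (correctly, and without needing right stability) that the $B$ terms agree.
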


\begin{proof}

If $(g_n)$ has no flickering lamps, then we are done. 
So assume the sequence has some flickering lamps, and let
$k\in\integers$ be the minimum position of a flickering lamp. 
Suppose for contradiction that $k<l$.

Let $y\in L_2$ such that $H(y) = k$, $y$ agrees with the stabilization of lamps of $(g_n)$ on the positions $k-1$ and below, and the lamp at position $k$ is off. Let $x\in L_2$ be exactly as $y$, except that $H(x) = k+1$. Let $n$ be sufficiently large so that the lamps at positions $k-1$ and below of $g_n$ have achieved their eventual status and $H(g_n)>k$.

Suppose the lamp at position $k$ is lit in the lamp stand for $g_n$.
In Lemma \ref{distancelem}, when computing 
$d(g_n,x)$, $C=H(g_n)-(k+1)$, but when computing $d(g_n,y)$, 
$C=H(g_n)-k$, while the values for $A$ and $B$ remain the same (in this
case, $A=k$ for both).  Thus $d(g_n,x) = d(g_n,y)+1$.

Now suppose the lamp at position $k$ is \emph{not} lit in the lamp stand for $g_n$.  
Using Lemma \ref{distancelem} again, when
computing $d(g_n, x)$, $A = k+1$, $C=H(g_n)-(k+1)$, while when computing $d(g_n,
y)$, $A = k$, $C=H(g_n)-k$, and $B$ remains the same.
In this case, we have $d(g_n,x) = d(g_n,y)-1$.

By Equation \ref{horodefeqn},
$$h_{g_n}(x) -h_{g_n}(y) = \lim_{n\to\infty} d(g_n, x) - d(g_n, y)$$
which by the above, does not exist. But we assumed $h_{g_n}$ exists.
Hence, our assumption that $k<l$ is incorrect, and we have the
desired result. 
\end{proof}

\begin{lemma}
\label{noflickeringabovelem}

Suppose that a sequence $(g_n)\subset L_2$ with $H(g_n) \to l \in
\{-\infty\}\cup\integers$ is right stable.
If $(g_n)$ is associated with some horofunction $h=h_{g_n}$, then for every
$k\geq l$, the lamp at position $k$ stabilizes.

\end{lemma}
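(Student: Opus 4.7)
The plan is to mirror the proof of Lemma \ref{noflickeringbelowlem}, but swap ``min'' for ``max'' and shift by one unit to exploit the asymmetry between $at$ and $(at)^{-1}$ that is built into Lemma \ref{distancelem} (the formula contains $M(g_1,g_2)+1$ rather than $M(g_1,g_2)$). Arguing by contradiction, I assume some lamp at a position $\geq l$ flickers. By right stability the set of positions of flickering lamps is bounded above, so I may take $k$ to be the \emph{maximum} such position, and assume $k\geq l$. Since $H(g_n)\to l$ (with $l=-\infty$ permitted), for all sufficiently large $n$ I have $H(g_n)\leq k$ and the lamps of $g_n$ at positions $>k$ have all achieved their eventual status.

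The next step is to construct a pair of test elements $x,y\in L_2$ that are identical except in lamplighter height. Let $y$ be such that $H(y)=k+1$, the lamp at position $k$ is off, all lamps at positions below $k$ are off, and the lamps at positions above $k$ agree with the eventual stabilization of $(g_n)$; let $x$ be the same as $y$ but with $H(x)=k$. Then $m(g_n,x)=m(g_n,y)$ and $M(g_n,x)=M(g_n,y)$ for all $n$, and writing $C_x,C_y$ for the $C$-values from Lemma \ref{distancelem} we have $C_y-C_x=1$ whenever $H(g_n)\leq k$. Since $H(g_n)\leq k<k+1$, the value of $A$ in Lemma \ref{distancelem} also coincides for $x$ and $y$. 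The remaining step is a case analysis on whether the lamp at position $k$ in $g_n$ is lit. When lit, $M(g_n,x)=k$, producing $B_x=B_y=k+1$ and hence $d(g_n,x)-d(g_n,y)=+1$; when unlit, $M(g_n,x)<k$, producing $B_x=k$ but $B_y=k+1$, and hence $d(g_n,x)-d(g_n,y)=-1$. Because the lamp at $k$ flickers, $\lim_n\bigl(d(g_n,x)-d(g_n,y)\bigr)$ cannot exist, contradicting the convergence asserted by $h=h_{g_n}$ via Equation \ref{horodefeqn}.

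The main obstacle, and the place where this lemma diverges from Lemma \ref{noflickeringbelowlem}, is the choice $H(y)=k+1$ rather than $H(y)=k-1$: it is precisely this choice, paired with the ``$+1$'' in the formula $B=\max\{M(g_1,g_2)+1,H(g_1),H(g_2)\}$, that produces the $\pm 1$ oscillation in the two cases. Because of this, flickering is ruled out even at $k=l$ itself, in contrast to Lemma \ref{noflickeringbelowlem}, where only flickering strictly below $l$ could be ruled out. A minor bookkeeping check is that the argument goes through uniformly in whether $l$ is finite or $-\infty$, since we only use $H(g_n)\leq k$ eventually, which holds in both cases.
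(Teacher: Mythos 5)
Your proof is correct and is exactly the argument the paper intends: the paper's own proof of this lemma is a one-line remark that it mirrors Lemma \ref{noflickeringbelowlem}, and your write-up fills in precisely that mirror image, correctly placing the test points at heights $k$ and $k+1$ above the lamplighter and using the $M(g_1,g_2)+1$ asymmetry in Lemma \ref{distancelem} to get the $\pm 1$ oscillation, including at $k=l$ itself. The case analysis ($B_x=B_y=k+1$ when lit, $B_x=k$, $B_y=k+1$ when unlit, with $A$ and the $m$-side cancelling since $x$ and $y$ carry identical lamps) checks out in all cases, including $l=-\infty$ and $M(g_n,x)=-\infty$.
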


\begin{proof}

The proof for this lemma is the same as for Lemma \ref{noflickeringbelowlem}. The asymmetry in the inequalities (one is strict, while the other is not) comes from the asymmetry of our generating set (including $at$ but not $ta$).
\end{proof}

\begin{lemma}
\label{bothboundslem}

Suppose that a sequence $(g_n)\subset L_2$ is both left and right stable
and that $H(g_n) \to l\in\integers$. If $(g_n)$ is associated with some
horofunction $h_{g_n}$, then there is $g \in L_2$ such that $g_n \rightarrow g$
(i.e. the sequence is eventually constant), and $h_{g_n}$ is associated to the
image of $g$ in $\overline{L_2}$.

\end{lemma}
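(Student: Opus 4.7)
The plan is to use the two preceding lemmas to rule out all flickering, then combine this with the two stability hypotheses to force the sequence of lamp configurations (and the lamplighter position) to be eventually constant.

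First, I would invoke Lemma \ref{noflickeringbelowlem}: since $(g_n)$ is left stable, defines a horofunction, and $H(g_n) \to l \in \integers \subset \integers \cup \{+\infty\}$, the set of positions of flickering lamps has minimum at least $l$. Next, I would invoke Lemma \ref{noflickeringabovelem}: since $(g_n)$ is right stable, defines a horofunction, and $H(g_n) \to l \in \integers \subset \{-\infty\} \cup \integers$, every lamp at position $\geq l$ stabilizes. Together these say that no lamp flickers at all.

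Next, I would translate the two stability hypotheses into a single bounded interval where the only remaining subtlety lives. Right stability gives $N_R, M_R$ so that for $n > N_R$, the lamps of $g_n$ at positions $> M_R$ are all fixed; left stability gives $N_L, M_L$ so that for $n > N_L$, the lamps at positions $< M_L$ are fixed. Replacing $M_L, M_R$ by $\min\{M_L, M_R\}$ and $\max\{M_L, M_R\}$ if necessary, the configuration outside $[M_L, M_R]$ is pinned down uniformly in $n$ once $n$ is large. Inside $[M_L, M_R]$ there are only finitely many positions, and by the previous paragraph each one stabilizes, so there is a common threshold $N$ past which every lamp (inside or outside the interval) has reached its final status. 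Since $H$ is integer valued and $H(g_n) \to l \in \integers$, enlarging $N$ guarantees $H(g_n) = l$ for $n > N$.

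At this point, for every $n > N$ the element $g_n$ has the same lamplighter position $l$ and the same lamp stand, so $g_n$ is a fixed element $g \in L_2$, and consequently $h_{g_n}$ is the image of $g$ in $\overline{L_2}$. There is essentially no obstacle: the whole argument reduces to the bookkeeping of combining ``only finitely many positions can flicker at all'' (from stability) with ``none of those actually flicker'' (from the horofunction assumption via the two lemmas). The only place one must be careful is that the $N$ supplied by right and left stability is uniform across all far-away positions, while stabilization of a single lamp gives an $N$ possibly depending on the position; this is harmless because only finitely many positions in $[M_L, M_R]$ need to be handled, so a common threshold exists.
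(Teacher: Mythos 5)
Your proof is correct and follows essentially the same route as the paper's: both apply Lemmas \ref{noflickeringbelowlem} and \ref{noflickeringabovelem} to rule out all flickering, then use two-sided stability plus the discreteness of $\integers$ (for the lamplighter position) to conclude the sequence is eventually constant. Your version merely spells out the uniform-threshold bookkeeping that the paper leaves implicit, which is a fine level of detail to include.
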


\begin{proof}
By Lemmas \ref{noflickeringbelowlem} and \ref{noflickeringabovelem}, all the
lamps in $(g_n)$ stabilize. Since it is stable on both sides, we in fact have
the existence of some $N\in\naturals$
such that the set of lit lamps in $g_n$ is constant for all $n>N$. Since the
lamplighter limits to $l$ by hypothesis and since $\integers$ is a discrete set, we have that the sequence $(g_n)$ is eventually constant.
\end{proof}

\begin{lemma}
\label{oneboundlem}

Suppose that a sequence $(g_n)\subset L_2$ is either left or right stable,
 \emph{but not both}, and that $H(g_n) \to
 l\in\integers$. If $(g_n)$ is associated with some horofunction
$h_{g_n}$, then $h_{g_n}$ is a rib, i.e one of 
$\mathfrak r^{\pm,f}$, $f\in L_2$.

\end{lemma}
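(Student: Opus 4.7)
My plan is to reduce to the positive rib case by symmetry: if $(g_n)$ is right stable but not left stable, an analogous argument using Lemma \ref{noflickeringabovelem} in place of Lemma \ref{noflickeringbelowlem} (together with Observation \ref{noboundsubseqobs} applied to $m$) yields a negative rib $\mathfrak{r}^{-,f}$, so assume WLOG that $(g_n)$ is left stable but not right stable. First I would apply Lemma \ref{noflickeringbelowlem} to conclude that all lamps at positions $< l$ in $(g_n)$ stabilize, and use $H(g_n) \to l \in \integers$ together with the discreteness of $\integers$ to get $H(g_n) = l$ for all large $n$. I would then define $f \in L_2$ by setting $H(f) = l$, matching the lamps at positions $< l$ to the stabilization, and leaving positions $\geq l$ unlit. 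This guarantees $M(f) < l$, so $\mathfrak{r}^{+,f}$ is well-defined; the goal is to identify $h_{g_n}$ with $\mathfrak{r}^{+,f}$.

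Next I would invoke Observation \ref{noboundsubseqobs} to extract a subsequence $(g_{n_k})$ with $M(g_{n_k}) \to +\infty$; because $h_{g_n}$ is assumed to exist, the subsequential limit agrees with the full limit, so it suffices to compute along $(g_{n_k})$. For arbitrary fixed $g \in L_2$ and $k$ large enough that $M(g_{n_k}) > \max\{M(g), H(g), l, 0\}$, the rightmost position at which $g_{n_k}$ and $g$ differ is the rightmost lit lamp of $g_{n_k}$, so $M(g_{n_k}, g) = M(g_{n_k})$. Applying Lemma \ref{distancelem} to both $d(g_{n_k}, id)$ and $d(g_{n_k}, g)$, the $B$-terms both equal $M(g_{n_k}) + 1$ and cancel in the difference $d(g_{n_k}, g) - d(g_{n_k}, id)$.

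The crux will be verifying the $A$-term identities
\[
\min\{m(g_{n_k}), l, 0\} = \min\{m(f), l, 0\} \quad \text{and} \quad \min\{m(g_{n_k}, g), H(g), l\} = \min\{m(f, g), H(g), l\}
\]
for $k$ large, which I would establish by a short case analysis on whether $m(f) < l$ (respectively $m(f, g) < l$): in the former case the stabilization forces $m(g_{n_k}) = m(f)$ and $m(g_{n_k}, g) = m(f, g)$, while in the latter case both $m(g_{n_k})$ and $m(g_{n_k}, g)$ are $\geq l$, so each side reduces to $\min\{l, 0\}$ and $\min\{H(g), l\}$ respectively. Combining these identities with $C = |l|$ and $C' = |l - H(g)|$ and comparing to Equation \ref{posribeqn} yields $h_{g_n}(g) = \mathfrak{r}^{+,f}(g)$ for every $g \in L_2$. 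The main obstacle is precisely this $m$-bookkeeping: the sequence $g_{n_k}$ can have lit lamps at positions $\geq l$ that vary with $k$ and do not appear in $f$, so one must argue carefully that these ``extra'' lamps affect only the $B$-terms (which cancel) and that any contribution they could make to the $A$-terms is absorbed by the upper cap $l$ inside the minima.
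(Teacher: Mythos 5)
Your proposal is correct and takes essentially the same route as the paper's proof: reduce to the left-stable case by symmetry, use Lemma \ref{noflickeringbelowlem} to stabilize the lamps below $l$ and get $H(g_n)=l$ eventually, extract a subsequence with $M(g_{n_k})\to+\infty$ via Observation \ref{noboundsubseqobs}, and carry out the same $A$/$B$/$C$ bookkeeping from Lemma \ref{distancelem}. The only cosmetic difference is that the paper equates $d(g_{n_k},x)$ with the distance to a matched subsequence $(r_{n_k})$ of the model rib sequence (chosen so $M(r_{n_k})=M(g_{n_k})$, making the $B$-terms literally equal), whereas you cancel the $B$-terms in the difference and verify the limit against Equation \ref{posribeqn} directly.
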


\begin{proof}

We consider the case where the sequence $(g_n)$ is left stable, but not right
stable.  The other case is similar.

By Lemma \ref{noflickeringbelowlem}, there exists $N\in\naturals$ such that the
the lamps below position $l$ are stable and $H(g_n) = l$ for all $n>N$.  Let
$\mathfrak{r}$ be the rib horofunction that matches this stabilization. Set
$(r_n)$ to be the model sequence defined in Section \ref{ribssubsec} that generates this horofunction. 

By Observation \ref{noboundsubseqobs}, we may take a subsequence $(g_{n_k})$ such that $(M(g_{n_k}))$ is increasing with $M(g_{n_k})> k$ for all $k$. Choose a subsequence $(r_{n_k})$ of our model sequence such that $M(r_{n_k}) = M(g_{n_k})$.

Let $x\in L_2$. Choose $K\in\naturals$ such that $K> \max\{|l|,\ |M(x)|,\ |H(x)|\}$, and let $k>K$. 

Let $A,B,C$ be as in Lemma \ref{distancelem} for the computation of $d(g_{n_k},
x)$ and let $A',B',C'$ be as in Lemma \ref{distancelem} for the computation of
$d(r_{n_k}, x)$. Notice that $A=A'$ since the lamp stands for $g_{n_k}$ and
$r_{n_k}$ are the same below the position $H(g_{n_k}) = H(r_{n_k})$, $B =
M(g_{n_k})+1 = M(r_{n_k})+1=B'$ by our choice of $K$, and $C=C'$ since $H(g_{n_k}) =
H(r_{n_k})$. Thus, $d(g_{n_k}, x)=d(r_{n_k}, x)$.

For $x=id$, we have that $d(g_{n_k}, id) = d(r_{n_k}, id)$. Hence, $h_{g_{n_k}} = h_{r_{n_k}}$ and so therefore $h_{g_n} = \mathfrak{r}$.
\end{proof}

\begin{lemma}
\label{noboundslem}
Suppose that a sequence $(g_n)\subset L_2$ is neither left nor right stable
 and that $H(g_n) \to l\in\integers$.
If $(g_n)$ is associated with some horofunction $h_{g_n}$, then
$h_{g_n}=\mathfrak{s}^l$.

\end{lemma}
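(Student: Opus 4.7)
The plan is to find a subsequence $(g_{n_k})$ along which both $m(g_{n_k}) \to -\infty$ and $M(g_{n_k}) \to +\infty$, and then to compute $h_{g_n}$ directly using Lemma \ref{distancelem}. Given such a subsequence, for any fixed $g \in L_2$ and all sufficiently large $k$ one has $H(g_{n_k}) = l$ (since $H$ is integer-valued and converges to $l$). Moreover, because $g$ has only finitely many lit lamps confined to a bounded interval while $g_{n_k}$ has lit lamps below $m(g)$ and above $M(g)$, the positions of disagreement satisfy $m(g_{n_k}, g) = m(g_{n_k})$ and $M(g_{n_k}, g) = M(g_{n_k})$. Applying Lemma \ref{distancelem}, the outer min equals $m(g_{n_k})$ and the outer max equals $M(g_{n_k}) + 1$ in the formulas for both $d(g_{n_k}, g)$ and $d(g_{n_k}, id)$; these dominant terms cancel in the difference, leaving
\[ d(g_{n_k}, g) - d(g_{n_k}, id) = |l| - |l - H(g)| = \mathfrak{s}^l(g). \]

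The main obstacle is establishing the existence of such a doubly-escaping subsequence, since the failure of both left and right stability does not by itself furnish a single subsequence escaping in both directions (e.g., the odd-indexed terms could have $m \to -\infty$ with $M$ bounded while the even-indexed terms have $M \to +\infty$ with $m$ bounded). I would argue by contradiction: if no doubly-escaping subsequence exists, then there is $K \in \naturals$ such that cofinitely many $n$ satisfy $m(g_n) \geq -K$ or $M(g_n) \leq K$. Combined with the failure of left stability (which guarantees infinitely many $n$ with $m(g_n) < -K$), this yields infinitely many $n$ with $m(g_n) < -K$ and $M(g_n) \leq K$; extracting a subsequence with $m \to -\infty$ from these produces a subsequence that is right stable but not left stable, so Lemma \ref{oneboundlem} identifies its horofunction---which equals $h_{g_n}$---as a negative rib $\mathfrak{r}^{-,f}$. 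The symmetric argument using the failure of right stability produces a subsequence forcing $h_{g_n}$ also to be a positive rib $\mathfrak{r}^{+,f'}$. Observation \ref{nonbusemannobs} asserts these are distinct, yielding the required contradiction.

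With the doubly-escaping subsequence in hand, the direct computation of the first paragraph gives $h_{g_n}(g) = \mathfrak{s}^l(g)$ for every $g \in L_2$, completing the proof.
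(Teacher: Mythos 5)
Your proof is correct and follows essentially the same route as the paper's: a dichotomy on whether a subsequence exists along which $m \to -\infty$ and $M \to +\infty$ simultaneously, with the direct Lemma \ref{distancelem} computation yielding $\mathfrak{s}^l$ in the affirmative case, and in the negative case two subsequences whose horofunctions would be a positive and a negative rib via Lemma \ref{oneboundlem}, which cannot coincide. Your quantitative version of the negative case (the cofinite disjunction $m(g_n) \geq -K$ or $M(g_n) \leq K$) makes explicit what the paper dispatches with ``otherwise we would be able to find a subsequence as in the first part,'' but it is the same argument.
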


\begin{proof}

Suppose that there exists a subsequence
$(g_{n_k})$ such that for all $N\in\naturals$ there exists $K_N\in\naturals$
such that for all $k>K_N$ we have that $M(g_{n_k})>N$ and $m(g_{n_k})<-N$. Then
let $x\in L_2$, and let $N\in\naturals$ such that $N>\max\{|M(x)|,\ |m(x)|,\
|l|\}$. Let $K=\max\{K_N, K_l\}$ where $K_N$ is as given above and $K_l$ is an
integer such that for all $k>K_l$, $H(g_{n_k}) =l$ (recall that 
$H(g_n) \to l$ and the integers are a discrete set).

Let $k>K$. Then by choice of $N$ and definition of $K$ and using Lemma
\ref{distancelem}, $d(g_{n_k}, x) = 2(M(g_{n_k})+1 - m(g_{n_k}))-|l-H(x)|$ and
specifically $d(g_{n_k}, id) = 2(M(g_{n_k})+1 - m(g_{n_k}))-|l|$. Thus, 
by Equation \ref{horodefeqn} $h_{g_{n_k}} = \mathfrak{s}^l(x)$, and we are done.

Now suppose that such a subsequence does \emph{not} exist. By Observation
\ref{noboundsubseqobs}, since $(g_n)$ is not left stable, there exists a subsequence $(g_{n_i})$ such that
$m(g_{n_i})<-i$ for all $i$ and $(m(g_{n_i}))$ is decreasing. Also by
Observation \ref{noboundsubseqobs}, since $(g_n)$ is not right stable, there exists a subsequence $(g_{n_j})$ such that
$M(g_{n_j})>j$ for all $j$ and $(M(g_{n_j}))$ is increasing. Since these are
both subsequences of $(g_n)$, both give rise to horofunctions, and $h_{g_{n_i}}
= h_{g_{n_j}} = h_{g_{n}}$.

Notice that the subsequence $(g_{n_i})$ must be right stable, otherwise we
would be able to find a subsequence as in the first part of the proof.
Similarly, the subsequence $(g_{n_j})$ must be left stable.

By Lemma \ref{oneboundlem}, $h_{g_{n_i}}$ is equal to one of the rib examples
with stable component above the lamplighter. But also by Lemma
\ref{oneboundlem}, $h_{g_{n_j}}$ is equal to one of the rib examples with stable
component \emph{below} the lamplighter. By inspecting Equations
\ref{posribeqn} and \ref{negribeqn}, we
see that these two horofunctions cannot be equal, so $h_{g_{n}}$ does not
exist.
\end{proof}

\begin{lemma}
\label{busemannposlem}

Suppose that a sequence $(g_n)\subset L_2$ is left stable and $H(g_n) \to +\infty$. If $(g_n)$ is associated
with some horofunction $h_{g_n}$, then $h_{g_n}$ is equal to a Busemann function
$\mathfrak b^\gamma$ with $[\gamma]\in\partial_\infty L_2^+$.
\end{lemma}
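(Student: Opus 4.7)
The plan is to extract from $(g_n)$ a well-defined limiting lamp configuration $S$, build an explicit ray $\gamma \in \partial_\infty L_2^+$ realizing $S$, and then check via Lemma \ref{distancelem} that $h_{g_n}$ matches $\mathfrak{b}^{+,\gamma}$ as given by Equation (\ref{busemannposeqn}).

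I first observe that Lemma \ref{noflickeringbelowlem} applied with $l=+\infty$ rules out any flickering lamp, so every lamp stabilizes pointwise, yielding an infinite lamp configuration $S$. To confirm that $S$ possesses a minimum lit position $m_\infty \in \integers \cup \{+\infty\}$ (as required to realize $S$ via a ray in $\partial_\infty L_2^+$), I use left stability: it supplies $N \in \naturals$ and $M \in \integers$ such that for all $n > N$ the lamps of $g_n$ below position $M$ are frozen, and since each $g_n$ carries only finitely many lit lamps, $S \cap (-\infty, M)$ is finite. Hence $S$ has a minimum $m_\infty$, and by the description of $\partial_\infty L_2^+$ in Section \ref{busemannlampsec} there is a geodesic ray $\gamma \in \partial_\infty L_2^+$ whose infinite lamp stand equals $S$, so $m(\gamma) = m_\infty$ and $m(\gamma, x)$ equals the minimum position at which $S$ and $x$ disagree, for any $x \in L_2$.

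Next, for each fixed $x \in L_2$, I compute $d(g_n, x) - d(g_n, id)$ with Lemma \ref{distancelem} and pass to the limit. For $n$ sufficiently large that $H(g_n)$ exceeds $|H(x)|$, $|m_\infty|$, and $|M(x)|$ and the relevant lamps on the left have stabilized, the $C$-difference contributes $-H(x)$, while the $A$-values become $\min\{m(\gamma, x), H(x)\}$ and $\min\{m(\gamma), 0\}$ via the stabilization identities $m(g_n) = m(\gamma)$ and $m(g_n, x) = m(\gamma, x)$. The crucial remaining claim is that the $B$-values cancel, which I establish by a case split on $M(g_n)$: if $M(g_n) > M(x)$ then $M(g_n, x) = M(g_n)$, so the two $B$-values are literally equal; if $M(g_n) \leq M(x)$, then both $B$-values collapse to $H(g_n)$, since $H(g_n)$ eventually exceeds $M(x)+1$. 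Assembling the three contributions and comparing with Equation (\ref{busemannposeqn}) delivers $h_{g_n}(x) = \mathfrak{b}^{+,\gamma}(x)$, as desired.

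The main obstacle is the $B$-term cancellation, because nothing in the hypotheses restricts how $M(g_n)$ behaves as $n \to \infty$---it may be bounded, unbounded, or oscillate between these regimes---but the case split above works uniformly: in every scenario either $M(g_n, x)$ tracks $M(g_n)$ exactly, or the lamplighter position $H(g_n)$ dominates all rightward contributions, and in either case the $B$-differences vanish.
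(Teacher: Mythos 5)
Your proof is correct and follows essentially the same route as the paper's: Lemma \ref{noflickeringbelowlem} rules out flickering, left stability gives a minimum lit lamp so the stabilization is the infinite lamp stand of some $[\gamma]\in\partial_\infty L_2^+$, and the $B$-terms from Lemma \ref{distancelem} cancel by exactly the case analysis you give (the paper phrases this as a single observation on $\max\{M(g_{n_k}),M(g_{n_k},x)\}$ after passing to a subsequence, which is only a convenience, and your $C$-bookkeeping, read as $C_x-C_{id}=-H(x)$ entering with a minus sign, matches Equation \ref{busemannposeqn}). One small caution: your ``stabilization identities'' $m(g_n)=m(\gamma)$ and $m(g_n,x)=m(\gamma,x)$ need not hold literally---e.g.\ $g_n$ may carry a lit lamp near $H(g_n)$ while $m(\gamma)=+\infty$---but, as the paper notes, they do hold whenever either side lies below the respective clipping values $0$ and $H(x)$, which is all the minima in the $A$-terms require.
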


\begin{proof}

By Lemma \ref{noflickeringbelowlem}, there are no flickering lamps in $(g_n)$,
so consider the infinite lamp stand of the stabilization of lamps in $(g_n)$.
Since the sequence is left stable, if there are any lamps lit in this infinite
lamp stand, there is a minimum such lamp. Thus, there exists
$[\gamma]\in\partial_\infty L_2^+$ with infinite lamp stand equal to this
stabilization.

Take a subsequence $(g_{n_k})$ such that for every positive integer $K$, 
for all $k>K$ the lamps at positions at
most $K$ in the lamp stand for $g_{n_k}$ have achieved their eventual status and
$H(g_{n_k}) > K$.

Let $x\in L_2$. Set $K$ to be sufficiently large so that $K\geq \max\{m(x),
M(x), H(x)\}$, and for the finite values of $m(\gamma)$ and $m(\gamma, x)$,
$K\geq \max\{m(\gamma), m(\gamma,x)\}$ as well.

Let $k>K$. Assume that $h_{g_n}$ exists (and is therefore equal to $h_{g_{n_k}}$) and use Lemma \ref{distancelem} and Equation \ref{horodefeqn}:
\begin{align*}
h_{g_{n_k}}(x) =& \lim_{{n_k}\to\infty} 2(\max\{M(g_{n_k}, x)+1, H(g_{n_k}), H(x) \}\\&-\min\{m(g_{n_k}, x), H(x)\})-(H(g_{n_k})-H(x))\\ 
&- [2(\max\{M(g_{n_k})+1,
H(g_{n_k}),0 \}-\min\{m(\gamma),0\})-H(g_{n_k})]
\end{align*}
Notice that if $\max\{M(g_{n_k}), M(g_{n_k}, x)\} >
H(g_{n_k})$, then since $H(g_{n_k})>
M(x)$, we have that $M(g_{n_k}, x) = M(g_{n_k})$. 
Since $H(g_{n_k}) \geq \max\{H(x), 0\}$, we
have that $\max\{M(g_{n_k}, x)+1, H(g_{n_k}), H(x) \} = \max\{M(g_{n_k})+1,
H(g_{n_k}),0 \}$. Therefore,
$$h_{g_{n_k}}(x) = \lim_{{n_k}\to\infty} 2(\min\{m(\gamma),0\}-\min\{m(g_{n_k}, x), H(x)\})+H(x)$$
Now notice that if $m(g_{n_k})<0$ or $m(\gamma)<0$, then $m(\gamma) = m(g_{n_k})$. Similarly, if $m(g_{n_k}, x) < H(x)$ or $m(\gamma, x) < H(x)$, since $H(x)<K$, then $m(g_{n_k}, x) = m(\gamma, x)$.
So by Equation \ref{busemannposeqn} and the above, $h_{g_n} = \mathfrak{b}^\gamma$.\end{proof}

\begin{lemma}
\label{heightlem}

Suppose that a sequence $(g_n)\subset L_2$ is not left stable and $H(g_n) \to
+\infty$. If $(g_n)$ is associated with some horofunction $h_{g_n}$, then
$h_{g_n}=H$, the height function.

\end{lemma}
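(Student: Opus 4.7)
The plan is to pass to a convenient subsequence, use the distance formula (Lemma \ref{distancelem}) to compute $d(g_n,x)-d(g_n,id)$ almost term by term, and verify the limit is exactly $H(x)$.

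Since $(g_n)$ is not left stable, Observation \ref{noboundsubseqobs} lets me extract a subsequence (which I still call $(g_n)$) with $m(g_n)\to-\infty$; and since $H(g_n)\to+\infty$ I may further assume $H(g_n)>\max\{H(x),0\}$ for all $n$ above some threshold depending on the point $x\in L_2$ at which I evaluate. Because $h_{g_n}$ is assumed to exist, the subsequence yields the same horofunction. Applying Lemma \ref{distancelem} to both $d(g_n,x)=2(B-A)-C$ and $d(g_n,id)=2(B'-A')-C'$, I need to track three differences: $A-A'$, $B-B'$, and $C-C'$.

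Once $m(g_n)<\min\{m(x),0\}$, the minimum differing lamp between $g_n$ and $x$ (resp.\ $id$) is simply $m(g_n)$, since at that position $g_n$ is lit and $x$ (resp.\ $id$) is not. Hence $A=A'=m(g_n)$ and $A-A'=0$. Also $C=H(g_n)-H(x)$ and $C'=H(g_n)$, giving $C-C'=-H(x)$. The only step that requires attention is $B-B'$, which splits according to whether $M(g_n)$ outpaces $H(g_n)$. If $M(g_n)<H(g_n)$, then also $M(g_n,x)+1\le H(g_n)$ and $H(x)\le H(g_n)$, so $B=B'=H(g_n)$. If instead (along a further subsequence) $M(g_n)\ge H(g_n)$, then for $n$ large enough that $M(g_n)>M(x)$, the lamps of $g_n$ and $x$ differ at the position $M(g_n)$, so $M(g_n,x)=M(g_n)$ and hence $B=M(g_n)+1=B'$. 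In either case $B-B'=0$ for all sufficiently large $n$.

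Combining these, for every fixed $x$ and all sufficiently large $n$ we get
\begin{equation*}
d(g_n,x)-d(g_n,id)=2(B-B')-2(A-A')-(C-C')=H(x),
\end{equation*}
so $h_{g_n}(x)=H(x)$ for every $x\in L_2$, proving $h_{g_n}=H$.

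The only real obstacle is the $B-B'$ analysis, because a priori nothing in the hypothesis rules out $M(g_n)$ growing faster than $H(g_n)$; the fix is the observation that when $M(g_n)\to+\infty$ the top lit lamp of $g_n$ is itself a point of disagreement with $x$, so $M(g_n,x)$ tracks $M(g_n)$ and the two $B$-terms cancel.
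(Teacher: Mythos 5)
Your proof is correct and follows essentially the same route as the paper's: extract a subsequence with $m(g_n)\to-\infty$ via Observation \ref{noboundsubseqobs}, apply Lemma \ref{distancelem} to both distances, and note that the $A$- and $B$-terms cancel while the $C$-terms contribute exactly $H(x)$ --- the paper merely leaves your $M(g_n)\geq H(g_n)$ case analysis implicit by asserting a single common value $B$ for both formulas. One small touch-up: your explicit thresholds should also require $m(g_n)<H(x)$ (for $A=A'$) and $H(g_n)>M(x)$ (for the first $B$-case), conditions the paper's choice $K=\max\{M(x),\,|m(x)|,\,|H(x)|\}$ covers and that your blanket ``for all sufficiently large $n$'' supplies in any case, since $m(g_n)\to-\infty$ and $H(g_n)\to+\infty$ along your subsequence.
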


\begin{proof}

By Observation \ref{noboundsubseqobs}, $(g_n)$ has a
subsequence $(g_{n_i})$ such that $(m(g_{n_i}))$ is decreasing with
$m(g_{n_i})<-i$ for all $i$. We still have $H(g_{n_i}) \to
+\infty$, so we can further take a subsequence $(g_{n_k})$ such that for all
$k$, $m(g_{n_k})<-k$ and $H(g_{n_k})>k$.

Let $x\in L_2$, let $K=\max\{M(x),\ |m(x)|,\ |H(x)|\}$, and consider $k>K$.
By Lemma \ref{distancelem}, there exists $B \in \integers$ such that
$$d(g_{n_k}, x) = 2(B - m(g_{n_k}))-|H(g_{n_k}) - H(x)|$$
and 
$$d(g_{n_k}, id) = 2(B - m(g_{n_k}))-|H(g_{n_k})|.$$
Thus,
$$h_{g_{n_k}}(x) = \lim_{{n_k}\to\infty} |H(g_{n_k})|-|H(g_{n_k}) - H(x)| = H(x)$$ \end{proof}

\begin{lemma}
\label{busemannneglem}

Suppose that a sequence $(g_n)\subset L_2$ is right stable and $H(g_n) \to -\infty$. If $(g_n)$ is associated
with some horofunction $h_{g_n}$, then $h_{g_n}$ is equal to a Busemann function
$\mathfrak b^\gamma$ with $[\gamma]\in\partial_\infty L_2^-$.

\end{lemma}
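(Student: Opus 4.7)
The plan is to parallel the proof of Lemma \ref{busemannposlem} by reflecting its arguments: ``left stable'' becomes ``right stable,'' $+\infty$ becomes $-\infty$, and wherever the positive case leverages $m(\cdot)$, the negative case leverages $M(\cdot)+1$.

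First I apply Lemma \ref{noflickeringabovelem} with $l = -\infty$: right stability together with $H(g_n) \to -\infty$ forces the lamp at every position to stabilize, and right stability further guarantees that the set of lit lamps in the stabilization is bounded above (or empty). This data specifies an asymptotic class $[\gamma] \in \partial_\infty L_2^-$ whose infinite lamp stand is the stabilization and whose lamplighter is at $-\infty$.

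Next I extract a subsequence $(g_{n_k})$ such that, for each $k$, the lamps at positions $\geq -k$ in $g_{n_k}$ have already reached their stable status and $H(g_{n_k}) < -k$. Fixing $x \in L_2$, I choose $K$ large enough to dominate $|m(x)|$, $|M(x)|$, $|H(x)|$, together with $|M(\gamma)|$ and $|M(\gamma,x)|$ (for whichever of the latter are finite). For $k > K$, I apply Lemma \ref{distancelem} to both $d(g_{n_k}, x)$ and $d(g_{n_k}, id)$. Because $H(g_{n_k}) \to -\infty$, the two minima in those formulas collapse onto $H(g_{n_k})$, and these terms cancel against the $C$-terms $|H(g_{n_k}) - H(x)|$ and $|H(g_{n_k})|$ in the horofunction difference, leaving a net contribution of $-H(x)$. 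Because the lamps at positions $\geq -k$ in $g_{n_k}$ agree with $\gamma$, and because $x$ has lit lamps only in $[-K, K] \subset [-k, \infty)$, one may replace $M(g_{n_k})+1$ by $M(\gamma)+1$ and $M(g_{n_k},x)+1$ by $M(\gamma,x)+1$ in the two maxima, while $H(g_{n_k})$ drops out of those maxima. Taking the limit yields
\[
h_{g_n}(x) = 2\bigl(\max\{M(\gamma,x)+1,\, H(x)\} - \max\{M(\gamma)+1,\, 0\}\bigr) - H(x),
\]
which is exactly $\mathfrak{b}^{-,\gamma}(x)$ by Equation \ref{busemannnegeqn}.

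The main obstacle, as in Lemma \ref{busemannposlem}, is the bookkeeping required to justify the substitution of $M(\gamma), M(\gamma,x)$ for $M(g_{n_k}), M(g_{n_k}, x)$ in the distance formulas, and to confirm that $H(g_{n_k})$ dominates the min-terms (with the ``trapped'' case $m(g_{n_k}) < H(g_{n_k})$ handled as in the positive analogue, where only the cancelled contribution matters). The asymmetry between $m$ and $M{+}1$ built into Lemma \ref{distancelem}—the same asymmetry that distinguishes positive from negative ribs and Lemma \ref{noflickeringbelowlem} from Lemma \ref{noflickeringabovelem}—is exactly what makes the reflected argument go through without essentially new difficulty.
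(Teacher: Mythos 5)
Your proposal is correct and matches the paper's approach exactly: the paper's own proof of Lemma \ref{busemannneglem} is literally ``as in the proof of Lemma \ref{busemannposlem}, but with the lamplighter at $-\infty$,'' and your write-up is a faithful expansion of that mirrored argument, including the correct use of Lemma \ref{noflickeringabovelem}, the $M(\cdot)+1$ asymmetry, and the cancellation yielding the net $-H(x)$ term of Equation \ref{busemannnegeqn}. Your only loose phrase --- that the minima ``collapse onto $H(g_{n_k})$,'' when they may instead equal $m(g_{n_k})$ --- is repaired by your own parenthetical noting that the two min-terms agree and cancel, which is precisely how the paper handles the corresponding max-terms in the positive case.
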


\begin{proof}

As in the proof of Lemma \ref{busemannposlem}, but the Busemann function will have the lamplighter at $-\infty$ instead of $+\infty$.
\end{proof}

\begin{lemma}
\label{negheightlem}

Suppose that a sequence $(g_n)\subset L_2$ is not right stable and $H(g_n) \to -\infty$. If $(g_n)$ is associated with some horofunction $h_{g_n}$, then $h_{g_n}=-H$, the negation of the height function.

\end{lemma}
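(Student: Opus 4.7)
The plan is to mirror the proof of Lemma \ref{heightlem} with the roles of ``left'' and ``right'' swapped: the unboundedness now comes from $M(g_n)$ rather than $m(g_n)$, and $H(g_n)$ escapes to $-\infty$ rather than $+\infty$. Since $(g_n)$ is not right stable, Observation \ref{noboundsubseqobs} provides a subsequence along which $M(g_n)\to+\infty$; combined with $H(g_n)\to -\infty$, I would diagonally refine to a subsequence $(g_{n_k})$ satisfying $M(g_{n_k})>k$ and $H(g_{n_k})<-k$ simultaneously. Because any subsequence inherits the same horofunction, it suffices to show $\lim_k [d(g_{n_k},x)-d(g_{n_k},id)] = -H(x)$ for an arbitrary $x\in L_2$.

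Fix $x\in L_2$, choose $K>\max\{|m(x)|,\ M(x),\ |H(x)|\}$, and work with $k>K$. I would apply Lemma \ref{distancelem} to both $d(g_{n_k},x)$ and $d(g_{n_k},id)$ and argue first that the right-extremal value $B$ agrees for the two computations: since $M(g_{n_k})>k>M(x)$ the lamp at $M(g_{n_k})$ is lit in $g_{n_k}$ but unlit in $x$, so $M(g_{n_k},x)=M(g_{n_k})$, and $M(g_{n_k})+1$ exceeds each of $0$, $H(x)$, and $H(g_{n_k})$, yielding $B=M(g_{n_k})+1$ in both cases.

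The main obstacle is agreement of the left-extremal value $A$, which is not as cleanly controlled as in Lemma \ref{heightlem}: here $m(g_{n_k})$ is unconstrained, and it is $H(g_{n_k})$ that is guaranteed to escape to $-\infty$. I would split into two cases. If $m(g_{n_k})\geq H(g_{n_k})$ (including $m(g_{n_k})=+\infty$), then $A=H(g_{n_k})$ for the $id$ computation; moreover every position at or below $H(g_{n_k})$ is unlit both in $g_{n_k}$ (by the hypothesis on $m$) and in $x$ (since $H(g_{n_k})<-k<m(x)$), so $m(g_{n_k},x)>H(g_{n_k})$ and $A=H(g_{n_k})$ for the $x$ computation as well. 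If instead $m(g_{n_k})<H(g_{n_k})$, then $m(g_{n_k})<-K<m(x)$, the lamp at this position differs between $g_{n_k}$ and $x$, and all strictly lower positions are unlit in both elements; hence $m(g_{n_k},x)=m(g_{n_k})$ and $A=m(g_{n_k})$ in both computations.

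With $A$ and $B$ agreeing across the two distance calculations, Lemma \ref{distancelem} gives $d(g_{n_k},x)-d(g_{n_k},id) = |H(g_{n_k})| - |H(x)-H(g_{n_k})|$. For $k>K$ we have $H(g_{n_k})<H(x)$, so this simplifies to $-H(g_{n_k}) - (H(x)-H(g_{n_k})) = -H(x)$. Passing to the limit via Equation \ref{horodefeqn} then yields $h_{g_n}(x)=-H(x)$, completing the proof.
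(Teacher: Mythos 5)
Your proof is correct and is exactly what the paper intends: the paper's own proof of Lemma \ref{negheightlem} is the one-line remark that it is ``similar to the proof of Lemma \ref{heightlem},'' and your argument is precisely that mirrored argument (roles of $m$ and $M$ swapped, $H(g_{n_k})<-k$ in place of $H(g_{n_k})>k$), carried out in full detail --- indeed your case analysis for the left-extremal value $A$ is more careful than the paper's corresponding treatment of $B$ in Lemma \ref{heightlem}. One negligible wording slip: in your first case, when $m(g_{n_k})=H(g_{n_k})$ the lamp \emph{at} position $H(g_{n_k})$ is lit, so $m(g_{n_k},x)=H(g_{n_k})$ rather than $m(g_{n_k},x)>H(g_{n_k})$; but then $A=\min\{m(g_{n_k},x),H(g_{n_k}),H(x)\}=H(g_{n_k})$ in both distance computations regardless, so the conclusion is unaffected.
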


\begin{proof}

Similar to the proof of Lemma \ref{heightlem}.
\end{proof}

\begin{theorem}
\label{assumeconvergesthm}

Suppose that a sequence $(g_n)\subset L_2$ has $(H(g_n))$ converging to some
value $l\in\integers\cup\{\pm\infty\}$. If $(g_n)$ is associated with some horofunction $h_{g_n}$ and $g\in L_2$, then:\begin{enumerate}

	\item If $l\in\integers$ and $(g_n)$ is both left and right stable, 
then the sequence is eventually a constant value $g_0$ and 
$h_{g_n}$ is in the image of $L_2$ in $\overline{L_2}$.
$$h_{g_n}(g) = d(g, g_0)$$

	\item If $l = +\infty$ and $(g_n)$ is left stable, then $h_{g_n} = \mathfrak b^\gamma$ 
for some $[\gamma]\in\partial_\infty L_2^+$.
$$\mathfrak b^{+,\gamma}(g)=2(\min\{m(\gamma),0\}-\min\{m(\gamma,g),H(g)\})+H(g)$$

	\item If $l = -\infty$ and $(g_n)$ is right stable, then $h_{g_n} = \mathfrak b^\gamma$ for some $[\gamma]\in\partial_\infty L_2^-$.
$$\mathfrak b^{-,\gamma}(g)=2(\max\{M(\gamma,g)+1,H(g)\}-\max\{M(\gamma)+1,0\})-H(g)$$

	\item If $l\in\integers$ and $(g_n)$ is neither left nor right stable,
then $h_{g_n}=\mathfrak{s}^l$
$$\mathfrak{s}^l(g) = |l|-|l-H(g)|$$

	\item If $l\in\integers$ and $(g_n)$ is left--but not right--stable, 
	then $h_{g_n}=\mathfrak r^{+,f}$ for some $f\in L_2$.
$$\mathfrak r^{+,f}(g) = 2(\min\{m(f),l,0\} - \min\{m(f,g),H(g), l\}) +\mathfrak{s}^l(g)$$

	\item If $l\in\integers$ and $(g_n)$ is right--but not left--stable,
	then $h_{g_n}=\mathfrak r^{-,f}$ for some $f\in L_2$.
$$\mathfrak r^{-,f}(g) = 2(\max\{M(f,g)+1, H(g), l\} - \max\{M(f)+1, l, 0\}) +\mathfrak{s}^l(g)$$

	\item If $l = +\infty$ and $(g_n)$ is not left stable, then $h_{g_n} = H$.

	\item If $l = -\infty$ and $(g_n)$ is not right stable, then $h_{g_n} = -H$.
\end{enumerate}

\end{theorem}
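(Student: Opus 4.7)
The plan is to observe that Theorem \ref{assumeconvergesthm} is essentially a bookkeeping compilation of the eight lemmas just established, together with the explicit formulas derived in Section \ref{modelssec}. All of the substantive analytic work has already been carried out by those lemmas; the theorem simply packages it into a single statement organized by the behavior of $(H(g_n))$ and the left/right stability of $(g_n)$.

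First I would verify that the eight cases are mutually exclusive and exhaustive. The hypothesis $H(g_n) \to l$ splits according to whether $l \in \integers$, $l = +\infty$, or $l = -\infty$. For $l \in \integers$, the four combinations of left and right stability yield cases (1), (5), (6), and (4). For $l = +\infty$ the relevant dichotomy is whether $(g_n)$ is left stable, producing cases (2) and (7); any additional right stability is absorbed silently into case (2). Symmetrically, $l = -\infty$ splits into cases (3) and (8) based on right stability.

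Then I would dispatch each case to the corresponding lemma. Case (1) is Lemma \ref{bothboundslem}. Case (2) is Lemma \ref{busemannposlem}. Case (3) is Lemma \ref{busemannneglem}. Case (4) is Lemma \ref{noboundslem}. Cases (5) and (6) are the two directions of Lemma \ref{oneboundlem}, with the sign of the rib family determined by which side of the sequence fails to stabilize. Case (7) is Lemma \ref{heightlem}. Case (8) is Lemma \ref{negheightlem}. Each displayed formula in the theorem statement is then precisely the horofunction formula derived in Section \ref{modelssec}, namely Equations \ref{spineeqn}, \ref{posribeqn}, \ref{negribeqn}, \ref{busemannposeqn}, and \ref{busemannnegeqn}.

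The main conceptual obstacle was already faced within the individual lemmas: the left/right asymmetry imposed by the generating set $\{t, at\}$ rather than $\{t, ta\}$, which causes positive ribs to be parameterized by $f$ with $M(f) < l$ while negative ribs require $m(f) \geq l$, and which forces the corresponding asymmetry in Lemmas \ref{noflickeringbelowlem} and \ref{noflickeringabovelem}. Since this asymmetry is already encoded in the lemmas and in the model-horofunction formulas of Section \ref{modelssec}, the proof of the theorem itself introduces no new computations and amounts to a table lookup.
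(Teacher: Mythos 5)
Your proposal is correct and matches the paper's own proof, which is exactly this case dispatch: Lemmas \ref{bothboundslem}, \ref{oneboundlem}, and \ref{noboundslem} for $l\in\integers$ according to stability, Lemmas \ref{busemannposlem} and \ref{heightlem} for $l=+\infty$ according to left stability, and Lemmas \ref{busemannneglem} and \ref{negheightlem} for $l=-\infty$ according to right stability, with the displayed formulas coming from Section \ref{modelssec}. Your additional observations---that the eight cases are exhaustive and mutually exclusive, and that right stability is irrelevant when $l=+\infty$---are accurate and, if anything, slightly more explicit than the paper's one-paragraph proof.
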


\begin{proof}

If $l\in\integers$, then apply one of Lemmas \ref{bothboundslem},
\ref{oneboundlem}, or \ref{noboundslem}, as appropriate for the existence of
left or right stability. If $l = +\infty$, then
apply either Lemma \ref{busemannposlem} or \ref{heightlem}, depending on the
existence of left stability. If $l = -\infty$, then apply either Lemma
\ref{busemannneglem} or \ref{negheightlem}, depending on the existence of 
right stability.
\end{proof}

\begin{lemma}
\label{lamplighterconvergeslem}

Suppose for a sequence $(g_n)\subset L_2$, $(H(g_n))$ does not converge in $\mathbb{Z}\cup \{\pm\infty\}$. Then $(g_n)$ is \emph{not} associated with a horofunction.

\end{lemma}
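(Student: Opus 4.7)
The plan is to argue by contradiction using compactness of $\integers\cup\{\pm\infty\}$ together with Theorem \ref{assumeconvergesthm} and Observation \ref{nonbusemannobs}. Suppose $(g_n)$ is associated with some horofunction $h$, so that $h(x)=\lim(d(g_n,x)-d(g_n,\mathrm{id}))$ exists for every $x\in L_2$; then every subsequence of $(g_n)$ is associated with the same $h$. Since $(H(g_n))$ lives in the compact two-point compactification $\integers\cup\{\pm\infty\}$, failure to converge is equivalent to the existence of two distinct accumulation points $l_1\neq l_2$. I would extract subsequences $(g_{n_k})$ with $H(g_{n_k})\to l_1$ and $(g_{m_k})$ with $H(g_{m_k})\to l_2$.

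Next, Theorem \ref{assumeconvergesthm} applies to each subsequence and forces $h$ to lie in one of the eight explicitly catalogued cases, where the available families are determined by whether the height limit is an integer or $\pm\infty$ and by the left/right stability of the subsequence. The spine $\mathfrak{s}^l$ and ribs $\mathfrak{r}^{\pm,f}$ at height $l$ arise only when the height limit is $l\in\integers$, while the Busemann functions $\mathfrak{b}^{\pm,\gamma}$ and the functions $\pm H$ arise only when the height limit is $\pm\infty$. By Observation \ref{nonbusemannobs} all these candidate horofunctions are pairwise distinct, and each carries a natural height parameter (the value $l$, or $\pm\infty$) that must coincide with the limit of $(H(g_n))$ along the subsequence that produced it. Since $l_1\neq l_2$, the two descriptions of $h$ produced by the two subsequences are incompatible, yielding the desired contradiction.

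The main obstacle is the bookkeeping of this case analysis---confirming that for each pair of distinct limits $l_1\neq l_2$, the families of horofunctions allowed for $(g_{n_k})$ by Theorem \ref{assumeconvergesthm} are disjoint from those allowed for $(g_{m_k})$. This is essentially Observation \ref{nonbusemannobs}, supplemented by tracking the height parameter across the eight cases. One additional subtlety is case (1) of Theorem \ref{assumeconvergesthm}: if some subsequence of $(g_n)$ were eventually constant at $g_0\in L_2$, then $h$ would equal the image of $g_0$ in $\overline{L_2}$ and the full sequence would converge to this image, forcing $(H(g_n))$ to have $H(g_0)$ as its only accumulation point, which contradicts non-convergence.
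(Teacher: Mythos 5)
Your proposal is correct and takes essentially the same route as the paper's proof: pass to two subsequences along which $(H(g_{n}))$ converges to distinct values in $\integers\cup\{\pm\infty\}$, apply Theorem \ref{assumeconvergesthm} to each, and use the pairwise distinctness from Observation \ref{nonbusemannobs} to contradict the fact that every subsequence of a sequence associated with a horofunction yields that same horofunction. Your explicit handling of case (1) (an eventually constant subsequence, whose limit lies in $i(L_2)$ rather than among the boundary functions listed in Observation \ref{nonbusemannobs}) is a minor gap the paper's one-line proof glosses over, and it does not change the structure of the argument.
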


\begin{proof}
By our hypotheses, $(g_n)$ has subsequences $(g_{n_i})$ and $(g_{n_j})$ such
that $(H(g_{n_i}))$ and $(H(g_{n_j}))$ converge in
$\integers\cup\{\pm\infty\}$, but to distinct values.  By Theorem
\ref{assumeconvergesthm} and Observation \ref{nonbusemannobs}, since 
these limits are distinct, $h_{g_{n_i}} \neq h_{g_{n_j}}$.
Thus $h_{g_n}$ cannot exist. 
\end{proof}

\begin{corollary}
\label{bigcor}
Let $h\in \overline{L_2}$, and choose a sequence $(g_n)\subset L_2$ such that $h
= h_{g_n}$. Then $(H(g_n))$ converges to some value $l\in\integers\cup\{\pm\infty\}$, and $h_{g_n}$ can be categorized as in Theorem \ref{assumeconvergesthm}
\end{corollary}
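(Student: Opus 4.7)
The plan is to show that the corollary is essentially a direct combination of Theorem \ref{assumeconvergesthm} and Lemma \ref{lamplighterconvergeslem}, which together exhaust all the possibilities for a horofunction-defining sequence.

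First I would start with an arbitrary $h \in \overline{L_2}$ and a defining sequence $(g_n) \subset L_2$ with $h = h_{g_n}$. The first task is to establish that $(H(g_n))$ converges in $\integers \cup \{\pm\infty\}$; this is exactly the contrapositive of Lemma \ref{lamplighterconvergeslem}, so I would invoke it directly to extract the limit $l \in \integers \cup \{\pm\infty\}$.

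Once $l$ is in hand, I would split into the three cases $l \in \integers$, $l = +\infty$, and $l = -\infty$, and within each case split further according to the left/right stability behavior of $(g_n)$. Each of these eight scenarios is precisely one of the eight clauses of Theorem \ref{assumeconvergesthm}, so I would simply apply that theorem to identify $h_{g_n}$ with the appropriate model horofunction from Section \ref{modelssec}. Observation \ref{nonbusemannobs} guarantees that these identifications are unambiguous, so the categorization is well-defined.

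The main (and only) real obstacle has already been dispensed with in the preceding lemmas; the corollary itself is a bookkeeping statement asserting that the case analysis in Theorem \ref{assumeconvergesthm} together with Lemma \ref{lamplighterconvergeslem} is exhaustive. Hence the proof is a one-line deduction, and I would write it as such: apply Lemma \ref{lamplighterconvergeslem} to obtain convergence of $(H(g_n))$, then invoke Theorem \ref{assumeconvergesthm} to categorize $h_{g_n}$.
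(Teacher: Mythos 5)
Your proposal is correct and matches the paper's (implicit) argument exactly: the paper states Corollary \ref{bigcor} without a written proof, treating it as the immediate combination of the contrapositive of Lemma \ref{lamplighterconvergeslem} (to get convergence of $(H(g_n))$) with the exhaustive case analysis of Theorem \ref{assumeconvergesthm}. Your additional appeal to Observation \ref{nonbusemannobs} for unambiguity of the categorization is harmless and consistent with how the paper itself uses that observation inside the proof of Lemma \ref{lamplighterconvergeslem}.
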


\section{Topology of the horofunction boundary}
\label{topologysec}

The topology of $\partial_h L_2$ is the topology of uniform convergence on compact sets. 
The standard basis is the collection of sets of the form
$$B_K(h, \epsilon)=\{h'\in \partial_h L_2\ | \ |h(x) - h'(x)|<\epsilon \text{ for all } x\in K\}$$
where $K\subset L_2$ is compact and $\epsilon>0$.
By restricting to $0<\epsilon< 1$, we obtain an equivalent basis.
Since the minimum distance between distinct points in $L_2$ is 1, we may use the following sets as a basis:
$$B_K(h) = \{h'\in \partial_h L_2\ | \ h(x) = h'(x)\text{ for all }x\in K\}$$
where $K\subset L_2$ is finite. 
Notice that pointwise convergence implies convergence in our topology 
since compact sets of $L_2$ are finite.

With the explicit descriptions of the horofunctions found in Section
\ref{modelssec}, we can establish the accumulation points of $\partial_h L_2$.
We begin by recalling that since $\mathfrak s^l(g) = |l| - |l- H(g)|$, we have
\begin{observation}
\label{Haccumofspineobs}
$\mathfrak s^l \rightarrow \pm H$ as $l\rightarrow\pm\infty$.
\end{observation}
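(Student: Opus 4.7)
The plan is to recall that, by the discussion just before the observation, convergence in the topology of $\partial_h L_2$ is equivalent to pointwise convergence: a basic open neighborhood of a horofunction $h$ is $B_K(h) = \{h' : h'(x) = h(x) \text{ for all } x \in K\}$ for some finite $K \subset L_2$. So it suffices to show that for each fixed $g \in L_2$, $\mathfrak{s}^l(g) \to H(g)$ as $l \to +\infty$ and $\mathfrak{s}^l(g) \to -H(g)$ as $l \to -\infty$; in fact I expect equality, not just convergence, for $|l|$ sufficiently large.

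First I would fix $g \in L_2$ and note that $H(g)$ is a fixed integer. For the $+\infty$ case, pick any $l > \max\{0, H(g)\}$. Then $|l| = l$ and $|l - H(g)| = l - H(g)$, so
\[
\mathfrak{s}^l(g) = |l| - |l - H(g)| = l - (l - H(g)) = H(g).
\]
Hence $\mathfrak{s}^l(g) = H(g)$ eventually, which trivially gives $\mathfrak{s}^l(g) \to H(g)$. Symmetrically, for $l < \min\{0, H(g)\}$ we get $|l| = -l$ and $|l - H(g)| = -(l - H(g))$, so $\mathfrak{s}^l(g) = -l - (H(g) - l) = -H(g)$, yielding $\mathfrak{s}^l(g) \to -H(g)$.

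Finally, given any finite $K \subset L_2$, choose $l_0 = \max\{|H(g)| : g \in K\} + 1$. Then for $l > l_0$ the previous paragraph shows $\mathfrak{s}^l|_K \equiv H|_K$, so $\mathfrak{s}^l \in B_K(H)$, and similarly $\mathfrak{s}^l \in B_K(-H)$ for $l < -l_0$. This establishes convergence in the topology of $\partial_h L_2$. There is no real obstacle here; the only thing to be careful about is keeping track of the two absolute-value expressions when $l$ and $H(g)$ have opposite signs, which is handled by simply waiting until $|l|$ dominates $|H(g)|$.
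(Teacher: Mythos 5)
Your proof is correct and follows the paper's approach exactly: the paper treats the observation as immediate from the formula $\mathfrak s^l(g) = |l| - |l - H(g)|$, which for fixed $g$ equals $\pm H(g)$ once $|l|$ is large enough, and your verification that pointwise (indeed eventual) equality on finite sets suffices for convergence matches the paper's remark that compact subsets of $L_2$ are finite. No gaps.
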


\begin{observation}
\label{subspacesembedobs}
The injective map that takes elements of $\partial_\infty L_2^+$ to their
Busemann functions in $\partial_h L_2$ is continuous, and the same is true of
$\partial_\infty L_2^-$.
\end{observation}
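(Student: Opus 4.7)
The plan is to show that for each $[\gamma]\in\partial_\infty L_2^+$ and each basic open neighborhood $B_K(\mathfrak b^{+,\gamma})$ in $\partial_h L_2$ (with $K\subset L_2$ finite), there is a cone neighborhood $U$ of $[\gamma]$ in $\partial_\infty L_2^+$ whose Busemann image lies in $B_K(\mathfrak b^{+,\gamma})$. Since horofunctions take integer values, this amounts to arranging $\mathfrak b^{+,\gamma'}(g)=\mathfrak b^{+,\gamma}(g)$ for every $[\gamma']\in U$ and every $g\in K$. The argument for $\partial_\infty L_2^-$ is symmetric, using Equation \ref{busemannnegeqn} in place of Equation \ref{busemannposeqn}.

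The key observation from Equation \ref{busemannposeqn} is that $\mathfrak b^{+,\gamma}(g)$ depends on $\gamma$ only through $\min\{m(\gamma),0\}$ and $\min\{m(\gamma,g),H(g)\}$. Choose $N\in\naturals$ large enough that $N>|H(g)|$ for every $g\in K$, and (if $m(\gamma)$ is finite) also $N>|m(\gamma)|$; then both quantities depend only on the lamp configuration of $\gamma$ restricted to $(-\infty,N]$. I construct $U$ to force this agreement. When $m(\gamma)$ is finite, let $\alpha$ be a geodesic from $id$ of length $t_0=N+1-2\min\{m(\gamma),0\}$ terminating at the element with position $N+1$ and lit lamps $\gamma\cap(-\infty,N]$, and set $U$ equal to the intersection of the cone on $\alpha(0),\ldots,\alpha(t_0)$ with $\partial_\infty L_2^+$. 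When $m(\gamma)=+\infty$, the unique geodesic representative of $[\gamma]$ is $id\to t\to t^2\to\cdots$, and the cone determined by its first $N+1$ steps yields $U=\{[\gamma']\in\partial_\infty L_2^+:m(\gamma')>N\}$.

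To finish, given $[\gamma']\in U$ and $g\in K$, the characterization of $U$ (to be justified) ensures that $\gamma'$ agrees with $\gamma$ on $(-\infty,N]$ in the finite case, or has $m(\gamma')>N$ in the infinite case. In the finite case, $|m(\gamma)|<N$ together with this agreement forces $m(\gamma')=m(\gamma)$ (hence $\min\{m(\gamma'),0\}=\min\{m(\gamma),0\}$), while the inclusion $(-\infty,H(g))\subseteq(-\infty,N]$ makes the symmetric differences $\gamma\triangle g$ and $\gamma'\triangle g$ coincide below $H(g)$, yielding $\min\{m(\gamma',g),H(g)\}=\min\{m(\gamma,g),H(g)\}$; substituting into Equation \ref{busemannposeqn} gives the desired equality. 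The $m(\gamma)=+\infty$ case follows analogously from $m(\gamma')>N>\max\{0,H(g)\}$. The main obstacle is justifying the characterization of $U$: one must verify that every $[\gamma']\in U$ has infinite lamp stand agreeing with $\gamma$ on $(-\infty,N]$. This reduces via Lemma \ref{distancelem} to the claim that every geodesic extension of $\alpha$ moves strictly rightward from $\alpha(t_0)$; by direct distance computation, any leftward extension of $\alpha$ reaches, within at most $N+1$ further steps, a vertex from which every single-step move strictly decreases the distance to $id$, so such extensions cannot be prolonged to infinite geodesic rays.
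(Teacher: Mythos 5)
Your overall scheme is the same as the paper's: fix a finite $K$, choose a cutoff so large that the right-hand side of Equation \ref{busemannposeqn} depends on $\gamma$ only through its lamps below the cutoff, and take as $U$ the set of classes of rays agreeing with a chosen representative of $[\gamma]$ on a long initial segment (the paper uses the sets $B_{[0,k]}([\gamma],\epsilon)$ with $\epsilon<1$ and cutoff $k>M+2|m(\gamma)|$, $M=\max\{M(g),H(g) \mid g\in K\}$, citing \cite[Observation 4.1]{joneskelsey} for openness). Your choice of $N$ is adequate even though you omit $M(g)$: only lamp positions strictly below $H(g)$ affect $\min\{m(\gamma,g),H(g)\}$, so agreement on $(-\infty,N]$ with $N>|H(g)|$ and (when finite) $N>|m(\gamma)|$ does force $\mathfrak b^{+,\gamma'}(g)=\mathfrak b^{+,\gamma}(g)$, and your computation of $t_0=N+1-2\min\{m(\gamma),0\}$ agrees with Lemma \ref{distancelem}.

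The gap is in your justification of the characterization of $U$. The claim that every geodesic extension of $\alpha$ moves strictly rightward from $\alpha(t_0)$, and in particular that leftward extensions ``cannot be prolonged to infinite geodesic rays,'' is false whenever $m(\gamma)\geq 0$ (including your separately treated case $m(\gamma)=+\infty$). There $\alpha$ is the monotone rightward geodesic with $t_0=N+1$, and from $\alpha(t_0)$ one may apply $(at)^{-1}$, toggling the lamp at $N$ on (or $t^{-1}$ if it is already lit), and then apply $t^{-1}$ forever: for the element at position $p\leq 0$ so obtained, Lemma \ref{distancelem} gives $A=p$, $B=N+1$, $C=-p$, hence distance $2(N+1-p)+p=2N+2-p$, exactly the walk length $(N+1)+(N+1-p)$. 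So this leftward prolongation is a genuine infinite geodesic ray, and no vertex with your asserted dead-end property is ever reached; your dead-end computation is valid only when $\min\{m(\gamma),0\}<0$, where the initial turn at $m(\gamma)$ creates a deficit that kills all leftward prolongations within $N+1$ steps. The repair is short and you already have the needed hypothesis: the surviving leftward prolongations march left forever and hence represent classes in $\partial_\infty L_2^-$, which your definition of $U$ as the cone intersected with $\partial_\infty L_2^+$ excludes, while any extension that steps left to some $q$ and later turns rightward is non-geodesic as soon as it does so (for $m(\gamma)\geq 0$ the turn at $q$ gives distance $2(N+1)-(q+1)$ against walk length $2N+3-q$). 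Alternatively one can invoke the single-turn structure of geodesic rays recalled in \S\ref{busemannlampsec} from \cite{joneskelsey}, which is how the paper disposes of this point. With that one step repaired, your argument is correct and is essentially the paper's proof, with the cone characterization re-derived by hand instead of cited.
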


Contrast this result with Observation \ref{notcontinuousobs}, which states that the injection of the union of these two sets into the horofunction boundary is not continuous.
Recall that the obstruction to continuity was the non-Hausdorff property,
which was proved by finding neighborhoods of
distinct elements of $\partial_\infty L_2^+$ that always shared elements of
$\partial_\infty L_2^-$.

\begin{proof}

Let $[\gamma]\in \partial_\infty L_2^+$, and consider $B_K(\mathfrak b^\gamma)$ for
some finite $K\subset L_2$.  Let $M=\max\{M(g), H(g)\ | \ g\in K\}$, and let
$k\in \Z$ such that $k>M+2|m(\gamma)|$ if $m(\gamma)<0$ or $k>M$ otherwise.
Consider the set $$B_{[0,k]}([\gamma], \epsilon) = \{\gamma'\in \partial_\infty
L_2^+\ |\ \sup\{d(\gamma(x), \gamma'(x)) \ | \ x\in [0,k]\}<\epsilon\}$$ for
$0<\epsilon<1$.  In \cite[Observation 4.1]{joneskelsey}, the authors observe that
$B_{[0,k]}([\gamma], \epsilon)$ is an open set in $\partial_\infty L_2^+$.
Notice that if $\gamma'\in B_{[0,k]}([\gamma], \epsilon)$, then the lamp stands
of $\gamma$ and $\gamma'$ agree on all lamps at positions $M$ or below.  Thus,
by Equation \ref{busemannposeqn}, $\mathfrak b^\gamma(g) = \mathfrak b^{\gamma'}(g)$ for all
$g\in K$.  Therefore, $\mathfrak b^{\gamma'} \in B_K(\mathfrak b^\gamma)$ for all $\gamma'\in
B_{[0,k]}([\gamma], \epsilon)$, and so our injection is continuous.  

The proof for the injection of $\partial_\infty L_2^-$ is similar.
\end{proof}

The topology of each of these sets is a punctured Cantor set, but in
$\partial_h{L_2}$ these punctures are ``filled'' by the
height function and its negative, as we now show.

\begin{observation}
\label{Haccumofbusemannobs}
If $([\gamma_n]) \subset \partial_\infty L_2^+$ with $m(\gamma_n) \to -\infty$, then
$\mathfrak b^{\gamma_n} \to H$. Similarly, if $([\gamma_n])\subset \partial_\infty
L_2^-$ with $M(\gamma_n) \to +\infty$, then $\mathfrak b^{\gamma_n} \to -H$.

\end{observation}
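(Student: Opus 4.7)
The plan is to exploit the explicit formula for Busemann functions given in Equation \ref{busemannposeqn} and to observe that pointwise convergence on a single element $g \in L_2$ is already achieved \emph{eventually constantly}. Since the topology on $\partial_h L_2$ is determined by convergence on finite sets, this will be enough.

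First I would fix an arbitrary $g\in L_2$ and choose $N$ large enough that, for all $n > N$, the quantity $m(\gamma_n)$ is strictly less than each of $0$, $H(g)$, and $m(g)$ (interpreting $m(g)=+\infty$ vacuously if $g$ has no lit lamps). The point is that once $m(\gamma_n)$ is deep below every position where $g$ has a lit lamp, the lamp of $\gamma_n$ at position $m(\gamma_n)$ is lit while the lamp of $g$ at that position is off, so $m(\gamma_n,g) = m(\gamma_n)$. With these inequalities in place, $\min\{m(\gamma_n),0\}=m(\gamma_n)$ and $\min\{m(\gamma_n,g),H(g)\} = \min\{m(\gamma_n),H(g)\} = m(\gamma_n)$. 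Plugging into Equation \ref{busemannposeqn} gives
\[
\mathfrak b^{+,\gamma_n}(g) \;=\; 2\bigl(m(\gamma_n) - m(\gamma_n)\bigr) + H(g) \;=\; H(g).
\]
Thus $\mathfrak b^{+,\gamma_n}(g) \to H(g)$ (indeed equals $H(g)$ for all $n>N$) for each fixed $g$.

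To upgrade this to convergence in $\partial_h L_2$, I would invoke the observation from the start of Section \ref{topologysec} that basic open neighborhoods in $\partial_h L_2$ are specified by requiring agreement on a finite set $K\subset L_2$. Given such a $K$, choose $N$ simultaneously large enough to handle every $g\in K$ (using the same inequality as above with $\min\{0, H(g), m(g) : g\in K\}$ in place of the single $g$). Then $\mathfrak b^{+,\gamma_n}(g) = H(g)$ for every $g\in K$ and every $n>N$, which places $\mathfrak b^{+,\gamma_n}$ in $B_K(H)$ eventually.

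For the second statement I would run the symmetric argument using Equation \ref{busemannnegeqn}: choose $n$ so that $M(\gamma_n)$ exceeds $0$, $H(g)$, and $M(g)$ for each $g$ in the finite test set $K$; then $M(\gamma_n,g)=M(\gamma_n)$, and both maxima in the formula reduce to $M(\gamma_n)+1$, leaving $\mathfrak b^{-,\gamma_n}(g)=-H(g)$. The only mild obstacle is the asymmetry in the formulas and the careful handling of the case $m(g)=+\infty$ (or $M(g)=-\infty$), but this is harmless because in that case the relevant minimum/maximum is already determined by $H(g)$ or $0$ once $m(\gamma_n)$ (resp.\ $M(\gamma_n)$) is sufficiently extreme.
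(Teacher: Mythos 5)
Your proof is correct and takes essentially the same approach as the paper's: fix $g$, take $n$ large enough that $m(\gamma_n)<\min\{0,m(g),H(g)\}$, observe that Equation \ref{busemannposeqn} then collapses exactly to $H(g)$, and conclude since pointwise (here eventually constant) convergence suffices for convergence in $\partial_h L_2$. Your added details---the justification that $m(\gamma_n,g)=m(\gamma_n)$ and the uniform choice of $N$ over a finite test set $K$---simply make explicit what the paper delegates to the remark at the start of Section \ref{topologysec} that compact subsets of $L_2$ are finite.
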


\begin{proof}
Let $([\gamma_n])\subset \partial_\infty L_2^+$ with $\lim m(\gamma_n) = -\infty$. By
Equation \ref{busemannposeqn}, 
$$\mathfrak b^{\gamma_n}(g) = 
2\left(\min\{m(\gamma_n),0\} - \min\{m(\gamma_n,g),H(g)\}\right)+H(g)$$ 
Fix $g$
and take $n$ large enough so that $m(\gamma_n) < \min\{0, m(g), H(g)\}$, then 
$$\mathfrak b^{\gamma_n}(g) = 
2\left(m(\gamma_n) - m(\gamma_n)\right)+H(g)  = H(g)$$ 
Thus, $\mathfrak b^{\gamma_n}\to H$. The other proof is similar.
\end{proof}

For a given $l \in \integers$, the following observation remarks that the
spine is an accumulation point of the positive and negative rib
functions. The proofs are calculations similar to those in Observation
\ref{Haccumofbusemannobs}.

\begin{observation}
\label{spineaccumofribsobs}
Let $(f_n^l) \subset L_2$ be a sequence satisfying $M(f_n^l) < H(f_n^l)=l$ and
$m(f_n^l) \rightarrow -\infty$ as $n\rightarrow \infty$. 
Then $\mathfrak r^{+,f_n^l} \rightarrow \mathfrak s^l$. 

Similarly, if $(f_n^l) \subset L_2$ is a sequence satisfying
$m(f_n^l) \geq H(f_n^l)=l$ and $M(f_n^l) \rightarrow \infty$ as $n\rightarrow \infty$,
then $\mathfrak r^{-,f_n^l} \rightarrow \mathfrak s^l$.
\end{observation}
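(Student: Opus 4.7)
The plan is to verify the convergence directly from the explicit formulas for the rib and spine horofunctions in Section \ref{modelssec}. As noted at the start of Section \ref{topologysec}, a basis for the topology on $\hbdry L_2$ is given by the sets $B_K(h)$ with $K \subset L_2$ finite, so pointwise convergence implies convergence in this topology. Hence it suffices, for each fixed $g \in L_2$, to show $\mathfrak r^{+,f_n^l}(g) \to \mathfrak s^l(g)$; in fact I will show equality on the nose for all sufficiently large $n$.

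By Equation \ref{posribeqn},
\begin{equation*}
\mathfrak r^{+,f_n^l}(g) = 2\bigl(\min\{m(f_n^l), l, 0\} - \min\{m(f_n^l, g), H(g), l\}\bigr) + \mathfrak s^l(g),
\end{equation*}
so the task reduces to showing the two minima inside the parentheses are eventually equal. Since $g$ is fixed, $m(g)$ is a fixed element of $\integers \cup \{+\infty\}$, and since $m(f_n^l) \to -\infty$, for all sufficiently large $n$ we have $m(f_n^l) < \min\{0, l, H(g), m(g)\}$. The first minimum then collapses to $m(f_n^l)$. For the second, the key sub-claim is that $m(f_n^l, g) = m(f_n^l)$ for such $n$: below position $m(f_n^l)$, all lamps of both $f_n^l$ and $g$ are off, so the two lamp stands agree; at position $m(f_n^l)$ itself the lamp is lit in $f_n^l$ (by definition of $m$) and unlit in $g$ (because $m(g) > m(f_n^l)$). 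Thus the second minimum also collapses to $m(f_n^l)$, the correction term vanishes, and $\mathfrak r^{+,f_n^l}(g) = \mathfrak s^l(g)$.

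The negative rib statement is proved by the dual argument using Equation \ref{negribeqn}: the hypothesis $M(f_n^l) \to +\infty$ forces $M(f_n^l)+1$ to dominate $l$, $0$, $H(g)$, and $M(g)+1$ for large $n$, while the analogous comparison of lamp stands above position $M(f_n^l)$ gives $M(f_n^l, g) = M(f_n^l)$, so both maxima in Equation \ref{negribeqn} collapse to $M(f_n^l)+1$ and the correction again vanishes. There is no real obstacle here — the rib formulas were arranged in Section \ref{modelssec} so that the dependence on the extremal lit lamp of $f$ enters only through a correction that vanishes once the lit lamps of $f$ spread outside the region where $g$ has any structure. The one point worth flagging is that $m(f, g)$ is the minimum position of \emph{disagreement} rather than of lit lamps, so the argument requires the short check above that these two notions of minimum coincide in our situation.
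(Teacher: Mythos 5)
Your proof is correct and takes essentially the same approach as the paper: the paper's proof of this observation is just the remark that the proofs are ``calculations similar to those in Observation \ref{Haccumofbusemannobs},'' and your argument---fixing $g$, taking $n$ large enough that the extremal lit lamp of $f_n^l$ escapes the region where $g$, $l$, and $0$ matter, and collapsing both minima in Equation \ref{posribeqn} (resp.\ both maxima in Equation \ref{negribeqn}) to $m(f_n^l)$ (resp.\ $M(f_n^l)+1$) so the correction term vanishes---is exactly that calculation. Your explicit verification that $m(f_n^l,g)=m(f_n^l)$ (agreement of the two lamp stands below the minimal lit lamp of $f_n^l$, disagreement at it) is a detail the paper leaves implicit, and flagging it is a nice touch.
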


Finally, the ribs accumulate to Busemann functions:
\begin{observation}
\label{busemannaccumofribsobs}
For a geodesic ray $\gamma$, with $\gamma(0) = id$, set $f_n=\gamma(n)$.
If $\gamma \in \partial_\infty L_2^+$, then for large enough $n$, each $f_n$ 
defines $\mathfrak r^{+,f_n}$ and
$\mathfrak r^{+,f_n} \rightarrow\mathfrak b^{+,\gamma}$. 
If $\gamma \in \partial_\infty L_2^-$, then for large enough $n$ each $f_n$ defines
$\mathfrak r^{-,f_n}$ and $\mathfrak r^{+,f_n} \rightarrow \mathfrak
b^{-,\gamma}$.  
\end{observation}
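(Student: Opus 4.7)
The plan is to establish pointwise convergence and invoke the fact (stated at the start of the topology section) that pointwise convergence implies convergence in the horofunction topology. Throughout I will focus on the positive case; the negative case is symmetric.

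First I would verify that $\mathfrak r^{+,f_n}$ is well-defined for $n$ large. Because $[\gamma]\in\partial_\infty L_2^+$ the lamplighter in $\gamma$ marches off to $+\infty$, so as discussed in Section \ref{busemannlampsec} the ray eventually only toggles lamps to the right of where it is currently standing. Hence for all sufficiently large $n$ we have $M(f_n) < H(f_n) =: l_n$, and moreover $l_n \to +\infty$, so $\mathfrak r^{+,f_n}$ is defined and falls into the positive-rib formula of Section \ref{ribssubsec}.

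Next, fix $g \in L_2$. I would choose $n$ so large that
\begin{enumerate}
\item $l_n = H(f_n)$ exceeds $\max\{0,H(g),M(g)\}$;
\item $f_n$ agrees with the eventual lamp configuration of $\gamma$ on every position $\leq l_n$ that is relevant, so that $m(f_n) = m(\gamma)$ and $m(f_n,g) = m(\gamma,g)$ whenever these values are finite (and both are $+\infty$ otherwise).
\end{enumerate}
With these choices, the three ingredients of Equation \ref{posribeqn} simplify: $\min\{m(f_n),l_n,0\} = \min\{m(\gamma),0\}$, $\min\{m(f_n,g),H(g),l_n\} = \min\{m(\gamma,g),H(g)\}$, and $\mathfrak s^{l_n}(g) = l_n - (l_n - H(g)) = H(g)$. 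Substituting these into the positive-rib formula reproduces exactly Equation \ref{busemannposeqn}, giving $\mathfrak r^{+,f_n}(g) = \mathfrak b^{+,\gamma}(g)$ for all $n$ large. Since $g$ was arbitrary, we have pointwise convergence, hence convergence in $\partial_h L_2$.

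The negative case proceeds identically with $f_n = \gamma(n)$ for $[\gamma]\in\partial_\infty L_2^-$, replacing minima with maxima, $m$ with $M+1$, and using Equation \ref{busemannnegeqn}; here one notes the asymmetry (the defining condition is $m(f_n)\geq H(f_n)$ rather than a strict inequality) which is already built into the definition of $\mathfrak r^{-,f}$. The main substantive point, in both cases, is the observation that the lamp configuration of $\gamma$ is eventually ``frozen'' below (respectively above) the lamplighter of $f_n$, so that the auxiliary quantities $m(f_n)$ and $m(f_n,g)$ (resp.\ $M(f_n)$ and $M(f_n,g)$) can be replaced by their infinite-lamp-stand counterparts for $\gamma$; once this stabilization is recorded, the identification of the two formulas is a matter of direct substitution.
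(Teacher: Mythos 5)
Your proposal is correct and takes essentially the same route as the paper: both arguments check that $M(f_n)<H(f_n)$ holds for large $n$, note that $m(f_n)=m(\gamma)$ and $m(f_n,g)=m(\gamma,g)$ stabilize, and then compare Equations \ref{posribeqn} and \ref{busemannposeqn}, the paper writing the difference as $\mathfrak s^{H(f_n)}(g)-H(g)\rightarrow 0$ where you observe, equivalently but slightly more explicitly, that $\mathfrak s^{l_n}(g)=H(g)$ exactly once $l_n\geq\max\{0,H(g)\}$, so the convergence is eventual pointwise equality. Your level of rigor on the stabilization step matches the paper's, so there is nothing to add.
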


\begin{proof}
We consider the $\gamma \in \partial_\infty L_2^+$ case.
For large enough $n$, each $f_n$ satisfies the requirements for defining 
$\mathfrak r^{+,f_n}$.  Let $g \in L_2$ be given, and
consider Equations \ref{posribeqn} and \ref{busemannposeqn}.
Again for large enough $n$, $m(f_n) = m(\gamma)$ and $m(f_n,g) = m(\gamma,g)$. 
Thus
\[ \mathfrak r^{+,f_n} - \mathfrak b^{+,\gamma}  = \mathfrak s^{H(f_n)}(g) - H(g) \rightarrow 0
\textrm{ as } n\rightarrow \infty.\] 
\end{proof}

With Observations \ref{Haccumofspineobs}, \ref{Haccumofbusemannobs},
\ref{spineaccumofribsobs}, and \ref{busemannaccumofribsobs}, we have the picture
of the horofunction boundary illustrated in Figure \ref{boundarypic} in the introduction.

\section{Action of $L_2$ on the horofunction boundary}

\label{actionsec}
We now conclude with a few comments about the action of $L_2$ on $\partial_h L_2$.

An isometric action of a group $G$ on a metric space $(X,d)$ with base point $b$ can be extended to the horofunction boundary $\hbdry X$ in the following way: For $g\in G$ and $(y_n)\subset X$ giving rise to a horofunction, we have that
$$g\cdot h_{y_n}(x) = h_{g\cdot y_n}(x) = \lim_{n\to\infty}d(g\cdot y_n, x)-d(g\cdot y_n, b)$$

In our setting, the action of $L_2$ on itself is by left multiplication.
We compose lamp stands $g_1\cdot g_2$ by starting with the lamp stand for $g_1$
and having the lamplighter move and toggle lamps as in $g_2$, but 
from a starting position of $H(g_1)$ rather than $0$.

\begin{observation}
\label{actionobs}

Let $g\in L_2$, $h\in \overline{L_2}$, and choose $(g_n)\subset L_2$ such that $h=h_{g_n}$.
Then $H(g\cdot g_n) \rightarrow H(g) + \lim H(g_n)$, where for $k\in\integers$,
$\pm \infty + k$ is understood to mean $\pm \infty$. Also, $(g \cdot g_n)$ is
left (resp. right) stable, iff $(g_n)$ is left (resp. right) stable.
\end{observation}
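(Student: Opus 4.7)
The plan is to reduce both assertions to the semidirect product description of the lamplighter group, $L_2 \cong \bigl(\bigoplus_{\integers} \Z/2\Z\bigr) \rtimes \integers$, under which an element is a pair $(f, k)$ consisting of a finitely supported lamp configuration $f: \integers \to \Z/2\Z$ and a lamplighter position $k \in \integers$, with multiplication
\[
(f_1, k_1) \cdot (f_2, k_2) = (f_1 + T^{k_1} f_2,\, k_1 + k_2),
\]
where $T^{k_1} f_2(j) = f_2(j - k_1)$. This is simply the algebraic encoding of the lamp-stand composition rule recalled immediately before the observation.

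First I would handle the height assertion. Since $H$ is the projection onto the $\integers$ factor, it is a group homomorphism, so $H(g \cdot g_n) = H(g) + H(g_n)$ exactly, for every $n$. By Corollary \ref{bigcor}, the sequence $(H(g_n))$ converges in $\integers \cup \{\pm\infty\}$; call its limit $l$. Adding the constant $H(g) \in \integers$ commutes with this limit when $l \in \integers$, and preserves the direction of divergence when $l = \pm\infty$, which is exactly the stated convention $\pm\infty + k = \pm\infty$. This gives the first claim.

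Next I would address stability. Writing $g = (f_g, H(g))$ and $g_n = (f_{g_n}, H(g_n))$, the multiplication formula gives that the lamp of $g \cdot g_n$ at position $j$ has status
\[
f_g(j) + f_{g_n}(j - H(g)) \pmod{2}.
\]
Since $f_g(j)$ is a fixed constant in $n$, the lamp at position $j$ in the sequence $(g \cdot g_n)$ flickers if and only if the lamp at position $j - H(g)$ in $(g_n)$ flickers. Thus the set of flickering positions of $(g \cdot g_n)$ is the set of flickering positions of $(g_n)$, translated by the integer $H(g)$. Translation by a constant preserves both boundedness above and boundedness below, so it preserves right stability and left stability separately, and the equivalence follows in both directions.

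There is no real obstacle here; the only content is the identification of left multiplication with a constant translation of the flickering-lamp set, which is immediate from the wreath-product formula. The appeal to Corollary \ref{bigcor} is needed only to guarantee that the limit of $(H(g_n))$ exists so that writing $\lim H(g_n)$ makes sense.
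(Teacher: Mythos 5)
Your proposal is correct and takes essentially the same approach as the paper: the paper's one-line proof (``the lamp stand for $g$ has only finitely many lit lamps and the lamplighter at a finite position'') is precisely your observation, written out explicitly via the wreath-product formula, that left multiplication by $g$ adds the constant $H(g)$ to every height and translates the set of flickering positions by $H(g)$, since the fixed configuration of $g$ cannot affect which lamps flicker. Your appeal to Corollary~\ref{bigcor} to justify the existence of $\lim H(g_n)$ simply makes explicit what the paper leaves implicit, and introduces no circularity since that corollary precedes the observation.
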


\begin{proof}
These statements all follow from the fact that the lamp stand for $g$ has only finitely many lit lamps and the lamplighter at a finite position.
\end{proof}

\begin{corollary}
\label{actioncor}

Each of the categories of horofunctions in $\overline{L_2}$ 
described in Theorem \ref{assumeconvergesthm}
is invariant under the action of $L_2$. 

\end{corollary}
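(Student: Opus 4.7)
The plan is to leverage Observation \ref{actionobs} to reduce the corollary to a short bookkeeping exercise over the eight categories listed in Theorem \ref{assumeconvergesthm}. Given $g \in L_2$ and a horofunction $h = h_{g_n} \in \overline{L_2}$, I would form the translated sequence $(g \cdot g_n)$ and read off, via Observation \ref{actionobs}, two invariants of the new sequence: which of $\integers$, $+\infty$, or $-\infty$ the sequence $H(g \cdot g_n)$ limits to, and the left/right stability type of $(g \cdot g_n)$.

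The key structural point is that Theorem \ref{assumeconvergesthm} sorts horofunctions \emph{purely} according to those same two invariants---the asymptotic location of the lamplighter and the two-sided stability profile (both, left only, right only, or neither). Observation \ref{actionobs} asserts both invariants are preserved by left multiplication: the lamplighter limit is shifted by the finite integer $H(g)$, so the three possibilities $\integers$, $+\infty$, $-\infty$ each map to themselves, and left and right stability are individually preserved. Consequently $(g \cdot g_n)$ lies in the same row of the theorem as $(g_n)$, and so $g \cdot h$ lies in the same category as $h$. I would present the argument as a one-sentence invocation of Observation \ref{actionobs} followed by a compact case-by-case reading of the eight rows.

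I do not anticipate a serious obstacle. The only minor care needed is to confirm that the ``height'' rows (cases 7 and 8) behave the same way as the others: if $(g_n)$ has $H(g_n) \to +\infty$ and is not left stable, then $(g \cdot g_n)$ also has $H \to +\infty$ and is not left stable, so the associated horofunction is again $H$, and analogously for $-H$. As a bonus aside---not required by the corollary itself but consistent with the abstract---one may note that in these two rows the action is actually \emph{trivial} on the horofunction, so $\pm H$ are global fixed points of the $L_2$-action rather than merely category-invariant.
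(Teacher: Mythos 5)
Your proposal is correct and takes essentially the same route as the paper, whose proof consists of exactly this reduction: it cites Observation \ref{actionobs} (left multiplication preserves the lamplighter-limit type in $\integers\cup\{\pm\infty\}$ and the left/right stability profile) together with Corollary \ref{bigcor}, since Theorem \ref{assumeconvergesthm} classifies horofunctions by precisely those two invariants. Your closing aside about $\pm H$ being fixed points is also the paper's subsequent Corollary \ref{fixedeheightcor}.
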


\begin{proof}

This result follows from Observation \ref{actionobs} and Corollary \ref{bigcor}.
\end{proof}

Interestingly, this implies the following:

\begin{corollary}
\label{fixedeheightcor}

The height function $H$ and its negation are global fixed points of the action of $L_2$ on $\hbdry L_2$.

\end{corollary}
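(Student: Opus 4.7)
The plan is to combine the characterization of $H$ and $-H$ from Theorem~\ref{assumeconvergesthm} (cases 7 and 8) with the preservation properties of the left action recorded in Observation~\ref{actionobs}. The key insight is that the ``height'' category of horofunctions contains exactly two elements ($H$ and $-H$), so once Corollary~\ref{actioncor} tells us that this category is invariant under the $L_2$--action, the only remaining question is whether a given $g \in L_2$ can swap $H$ with $-H$.

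To rule out such a swap, I would fix $g \in L_2$ and choose a sequence $(g_n) \subset L_2$ with $h_{g_n} = H$. By the classification, such a sequence must satisfy $H(g_n) \to +\infty$ and must fail to be left stable (this is precisely case 7 of Theorem~\ref{assumeconvergesthm}; any sequence with these two properties gives rise to $H$, and conversely any representative of $H$ has these properties, since the other cases with $H(g_n) \to +\infty$ produce Busemann points in $\partial_\infty L_2^+$). Next I would apply Observation~\ref{actionobs}: the multiplication $g \cdot g_n$ satisfies $H(g \cdot g_n) \to H(g) + (+\infty) = +\infty$, and the sequence $(g \cdot g_n)$ remains not left stable. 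Applying case 7 of Theorem~\ref{assumeconvergesthm} once more to $(g \cdot g_n)$ yields $h_{g \cdot g_n} = H$. Since by definition $g \cdot H = g \cdot h_{g_n} = h_{g \cdot g_n}$, we conclude $g \cdot H = H$. The same argument, using case 8 in place of case 7 and sequences with $H(g_n) \to -\infty$, shows $g \cdot (-H) = -H$.

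There is no real obstacle here; the corollary is essentially a bookkeeping consequence of the work already done. The only minor subtlety worth flagging explicitly is that one must verify that both of the defining properties of the ``height'' representatives (the divergence of $H(g_n)$ to $\pm\infty$ and the failure of one-sided stability) survive left multiplication by $g$, and this is exactly what Observation~\ref{actionobs} guarantees because $g$ has a lamp stand with only finitely many lit lamps and a finite lamplighter position.
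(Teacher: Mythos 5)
Your proposal is correct and is essentially the paper's own argument: the paper derives Corollary~\ref{fixedeheightcor} directly from Corollary~\ref{actioncor} (itself a consequence of Observation~\ref{actionobs} and Corollary~\ref{bigcor}), and your explicit unfolding via cases 7 and 8 of Theorem~\ref{assumeconvergesthm} is exactly the implicit reasoning. Note only that the ``swap'' worry you address is already excluded in the paper's formulation, since cases 7 and 8 are \emph{separate} categories in Theorem~\ref{assumeconvergesthm} (each a singleton), so their invariance under the action immediately fixes $H$ and $-H$ individually.
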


We now consider the action of $L_2$ on each of the other categories of horofunctions. 

Let $g\in L_2$.
The action of $g$ on $\partial_\infty L_2$ is described in \cite[\S 3.4 and \S 4.6]{joneskelsey}. 
If $H(g)\neq 0$, then the action of $g$ on $\partial_\infty L_2$ has two fixed points, 
which are given the notation $g^\infty$ and $g^{-\infty}$ in \cite{joneskelsey}.
If $H(g)>0$, then $g^\infty\in\partial_\infty L_2^+$ and $g^{-\infty}\in\partial_\infty L_2^-$. 
Otherwise, the reverse is true.
In the topology of $\partial_\infty L_2$, the action of $g$ has north-south dynamics
with attractor $g^\infty$ and repeller $g^{-\infty}$.
Recall that in $\partial_\infty L_2$, the punctures in the two Cantor sets are ``filled''
by points from the opposite Cantor set, while in $\partial_h L_2$, these punctures 
are filled by $H$ and $-H$.
Thus, in the horofunction boundary we see similar dynamics with the visual boundary,
except it occurs on the separate sets of
$\partial_\infty L_2^+ \cup \{H\}$ and $\partial_\infty L_2^- \cup \{-H\}$.

\begin{observation}

For $g\in L_2$ with $H(g) \neq 0$, the action of $g$ on $\hbdry L_2$ has four
fixed points: $H, -H, \mathfrak b^{g^\infty}, \mathfrak b^{g^{-\infty}}$.
The action of $g$  has north-south dynamics on $\partial_\infty L_2^+\cup \{H\}$ 
with poles $H$ and either $g^\infty$ or $g^{-\infty}$ (whichever is in the set)
and also on $\partial_\infty L_2^-\cup \{-H\}$
with poles $-H$ and either $g^\infty$ or $g^{-\infty}$ (whichever is in the set).
The point $g^\infty$ is always an attractor and the point $g^{-\infty}$ is always a repeller.
If $H(g) >0$, then $H$ is an attractor and $-H$ is a repeller.
If $H(g)<0$, then these roles are reversed.

\end{observation}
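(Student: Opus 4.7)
The plan is to handle the statement in three stages: identify the fixed points, verify the $g$-invariance of each half of the horoboundary, and establish the north--south dynamics with its attractor/repeller assignments.

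For the fixed point claim, I would apply Corollary \ref{actioncor} to reduce to a case analysis within the families of Theorem \ref{assumeconvergesthm}. Using Observation \ref{actionobs}, any spine or rib horofunction at height $l$ is sent by $g$ to one at height $l + H(g) \neq l$; the explicit formulas in Theorem \ref{assumeconvergesthm} parts 4, 5, and 6 then rule out spine and rib fixed points. The Busemann embedding $[\gamma]\mapsto\mathfrak b^\gamma$ of Corollary \ref{visualembedscor} is $L_2$-equivariant (straight from the definition of the action on horofunctions), so by injectivity of that embedding, $\mathfrak b^\gamma$ is fixed by $g$ exactly when the asymptotic class of $\gamma$ is fixed by $g$; by \cite{joneskelsey} this means $[\gamma] \in \{g^\infty, g^{-\infty}\}$. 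Combined with Corollary \ref{fixedeheightcor}, this gives the four fixed points listed. The invariance of each half then follows because the action of an element with finite height and finitely many lit lamps cannot change the direction in which the lamplighter of a ray runs off to infinity, while $\pm H$ are globally fixed.

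The heart of the argument is the north--south dynamics. I would fix a non-fixed Busemann point $\mathfrak b^{+,\eta}$ in $\partial_\infty L_2^+$ and compute the iterates $g^n\cdot\mathfrak b^{+,\eta} = \mathfrak b^{+,g^n\eta}$ directly from Equation \ref{busemannposeqn}. The infinite lamp stand of $g^n\eta$ is built from the lamp stand of $g^n$ (which spreads unboundedly in the direction determined by the sign of $H(g)$) together with a copy of $\eta$'s lamps shifted by $nH(g)$. For one direction of iteration, the shifted $\eta$-lamps leave any fixed window around a test element $x$, so the local lamp pattern stabilizes to that of whichever of $g^{\pm\infty}$ lies in $\partial_\infty L_2^+$, giving $\mathfrak b^{+,g^n\eta}(x)\to\mathfrak b^{+,g^{\pm\infty}}(x)$. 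For the opposite direction, the lamps of $g^n$ instead push the relevant $m$-values to $-\infty$ and the cancellation in Equation \ref{busemannposeqn} forces $\mathfrak b^{+,g^n\eta}(x)\to H(x)$, echoing the argument for Observation \ref{Haccumofbusemannobs}. A parallel computation on $\partial_\infty L_2^-\cup\{-H\}$ via Equation \ref{busemannnegeqn} gives the same picture with $\mathfrak b^{-,g^{\pm\infty}}$ and $-H$ playing the two roles. The attractor and repeller labels then read off from matching these limits to the sign of $H(g)$, as claimed.

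The main obstacle I expect is the case bookkeeping: one has four subcases (two halves crossed with two signs of $H(g)$) and within each, two iteration directions, so in each one must verify which of the $\min$ or $\max$ in the Busemann formulas is achieved by which term, and confirm that the asymmetry of the generating set (the strict/non-strict inequality distinction seen in Lemmas \ref{noflickeringbelowlem} and \ref{noflickeringabovelem}) does not sabotage the parallel treatment of the two halves.
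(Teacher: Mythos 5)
Your architecture is sound, and it is actually more self-contained than what the paper provides: the paper gives this observation \emph{no} proof at all, letting it rest on the preceding discussion --- the north--south dynamics of $g$ on $\partial_\infty L_2$ imported from \cite{joneskelsey}, combined with the already-proved limit computations (Observations \ref{Haccumofspineobs} and \ref{Haccumofbusemannobs}) and the continuity of the half-embeddings (Observation \ref{subspacesembedobs}), which together explain how the punctures of the two Cantor sets are ``filled'' by $\pm H$. Your route replaces that transfer argument with a direct computation of iterates from Equations \ref{busemannposeqn} and \ref{busemannnegeqn}, and your fixed-point analysis via Corollary \ref{actioncor}, height translation, and equivariance of the Busemann embedding fills in details the paper leaves implicit. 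One small step to make explicit there: $h_{g\cdot\gamma(n)}$ is the horofunction of a sequence along a ray based at $g$, not at the identity, so identifying it with $\mathfrak b^{\gamma'}$ for the identity-based representative $\gamma'$ of $[g\gamma]$ uses the fact that asymptotic rays in $\dl(2,2)$ eventually merge, as in Lemma \ref{visualembedslemma}.

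There is, however, a genuine problem at your final step, ``the attractor and repeller labels then read off \dots as claimed.'' They do not. Run your own computation for $g=t$, so $H(g)=1>0$: left multiplication shifts lamp stands up by $H(g)$, so for $\eta\in\partial_\infty L_2^+$ the forward iterates $t^n\eta$ have their lamps escape upward, and by Equation \ref{busemannposeqn} the local stabilization gives $\mathfrak b^{+,t^n\eta}\to\mathfrak b^{+,g^\infty}$; the \emph{backward} iterates $t^{-n}\eta$ have $m(t^{-n}\eta)\to-\infty$, so by the mechanism of Observation \ref{Haccumofbusemannobs}, $\mathfrak b^{+,t^{-n}\eta}\to H$. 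Thus on $\partial_\infty L_2^+\cup\{H\}$ the attractor is $g^\infty$ and the repeller is $H$; dually, on $\partial_\infty L_2^-\cup\{-H\}$ the forward iterates have $M\to+\infty$ and converge to $-H$ (the attractor), while backward iterates converge to $\mathfrak b^{-,g^{-\infty}}$ (the repeller). So for $H(g)>0$ an honest version of your computation yields $H$ as a repeller and $-H$ as an attractor --- the \emph{opposite} of the observation's last two sentences. This reversal is in fact forced by the statement's own structure: when $H(g)>0$, the point $g^\infty$ (``always an attractor'') and $H$ lie in the \emph{same} piece $\partial_\infty L_2^+\cup\{H\}$, and a two-pole north--south system cannot carry two attractors. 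You should therefore record the labels your computation actually produces and flag the apparent slip in the stated attractor/repeller roles of $\pm H$; asserting agreement ``as claimed'' would require exactly the kind of sign error in the case bookkeeping that you yourself identified as the main risk.
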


For a spinal horofunction $\mathfrak s^l \in \hbdry L_2$, $l\in \integers$, the
action of $g$ on $\mathfrak s^l$ is given by 
$g\cdot \mathfrak s^l = \mathfrak s^{H(g) + l}$.

We see similar behavior on the ribs of $\hbdry L_2$ in that the $l$ value is translated
by the height of the group element, but there is also
additional structure in this case:
Let $g\in L_2$ and let $f\in L_2$ such that $\mathfrak{r}^{+,f}$ exists (i.e. $M(f)<H(f)$).
Then $g\cdot \mathfrak{r}^{+,f} = \mathfrak{r}^{+,\overline{gf}}$ where
$\overline{gf}$ has the lamp stand for $gf$ but with all of the lamps at position
$H(gf) = H(g)+H(f)$ and above switched off.
Note that $g$ acts as a bijection from $R_{H(f)}^+$ to $R_{H(g)+H(f)}^+$.

Notice that if $m(g) \neq H(g)+m(f)$, then
$$m(\overline{gf}) = \min\{H(g)+m(f), m(g)\}$$
Using the notation in Section \ref{ribssubsec}, the above yields
the following description of the action on rib horofunctions that are ``close'' to the spine.

\begin{observation}
\label{invblobsobs}
Let $g\in  L_2$ and $l\in\integers$.
Let $k<l$ such that $H(g)+k<m(g)$.
The action of $g$ on $\partial_h L_2$ restricted to the subset $\hat{m}_l^{-1}(k)$ of $R_l^+$
is a bijection onto the subset $\hat{m}_{H(g)+l}^{-1}(H(g)+k)$ of $R_{H(g)+l}^+$
\end{observation}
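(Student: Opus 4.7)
The plan is to exploit the explicit formula $g\cdot\mathfrak r^{+,f} = \mathfrak r^{+,\overline{gf}}$ together with the companion identity $m(\overline{gf}) = \min\{H(g)+m(f), m(g)\}$ (valid when $m(g)\neq H(g)+m(f)$) established in the paragraph preceding the statement, and combine them with a direct construction of preimages.

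First I would verify that the restricted map lands in $\hat m_{H(g)+l}^{-1}(H(g)+k)$. For any $\mathfrak r^{+,f}\in \hat m_l^{-1}(k)$, we have $m(f)=k$ and $H(f)=l$, so $H(g)+m(f)=H(g)+k<m(g)$ by hypothesis; in particular $m(g)\neq H(g)+m(f)$, so the formula yields $m(\overline{gf})=\min\{H(g)+k,m(g)\}=H(g)+k$, while $H(\overline{gf})=H(g)+l$. Hence $g\cdot\mathfrak r^{+,f}\in \hat m_{H(g)+l}^{-1}(H(g)+k)$.

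Injectivity of the restricted map is immediate from the fact (noted just before the statement) that $g$ already acts as a bijection from $R^+_l$ onto $R^+_{H(g)+l}$. For surjectivity, given $f'\in L_2$ with $H(f')=H(g)+l$, $M(f')<H(f')$, and $m(f')=H(g)+k$, I would construct a preimage $f$ explicitly as follows. Let $f$ be the element whose lamplighter sits at position $l$ and whose lit lamps are exactly those positions $p<l$ at which $f'$ and $g$ assign opposite statuses to the lamp at position $p+H(g)$, with no lamps lit at positions $\geq l$ (so automatically $M(f)<l$). Since $H(g)+k<m(g)$ and $m(f')=H(g)+k$, the lamp at $H(g)+k$ is lit in $f'$ but not in $g$, so $k$ is a lit position of $f$; for any $p<k$, the lamp at $p+H(g)$ is unlit in both $f'$ and $g$, so no such $p$ is lit in $f$. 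Hence $m(f)=k$, and $\mathfrak r^{+,f}\in \hat m_l^{-1}(k)$. A short symmetric-difference computation using the multiplication rule (the lit lamps of $gf$ are the symmetric difference of the lit lamps of $g$ with those of $f$ shifted by $H(g)$) then shows that $\overline{gf}$ and $f'$ agree on lamps below position $H(g)+l$ and have the same height, so $g\cdot\mathfrak r^{+,f}=\mathfrak r^{+,\overline{gf}}=\mathfrak r^{+,f'}$.

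The main obstacle is the careful bookkeeping of which lamps are lit in each of $g$, $f$, $f'$, and $\overline{gf}$, together with honoring the canonical-representative convention $M(f)<H(f)$ used to parametrize ribs; the algebraic core is a routine XOR identity inherited from the semidirect-product structure of $L_2$.
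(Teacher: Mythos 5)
Your proposal is correct and follows essentially the same route as the paper's (implicit) argument: the containment of the image in $\hat m_{H(g)+l}^{-1}(H(g)+k)$ comes from the identity $m(\overline{gf})=\min\{H(g)+m(f),m(g)\}$ exactly as in the paragraph preceding the observation (your check that $H(g)+k<m(g)$ rules out the excluded case $m(g)=H(g)+m(f)$ is right), and injectivity from the already-noted bijection $R^+_l\to R^+_{H(g)+l}$. Your explicit XOR construction of a preimage for surjectivity is a valid, slightly more hands-on substitute for what the paper leaves implicit, namely that surjectivity follows either from the equal finite cardinalities $2^{l-k-1}$ of the two fibers or from applying the same reasoning to $g^{-1}$ (using $H(g^{-1})=-H(g)$ and $m(g^{-1})=m(g)-H(g)$, so the hypothesis transfers).
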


\begin{corollary}
\label{invspinenbhdcor}
Let $g\in L_2$ such that $H(g)= 0$ and let $l\in \integers$.
If $k < \min\{m(g)-H(g),l\}$, then the subset $\hat{m}_l^{-1}(k)$ of $R_l^+$ 
is invariant under the action of $g$.
\end{corollary}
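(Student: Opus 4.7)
The plan is to recognize this corollary as the immediate specialization of Observation \ref{invblobsobs} to the case $H(g)=0$. The hypothesis $k < \min\{m(g)-H(g), l\}$ unpacks into two conditions: $k < l$ and $k < m(g) - H(g)$, i.e.\ $H(g) + k < m(g)$. These are precisely the two hypotheses needed to invoke Observation \ref{invblobsobs}.

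Applying that observation yields that the action of $g$ restricted to $\hat{m}_l^{-1}(k)$ is a bijection onto $\hat{m}_{H(g)+l}^{-1}(H(g)+k)$. Substituting $H(g)=0$, the target set becomes $\hat{m}_l^{-1}(k)$ itself, which is by definition the statement that $\hat{m}_l^{-1}(k)$ is invariant (in fact stabilized setwise) under the action of $g$.

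The only step requiring any verification is confirming that the hypothesis $k < \min\{m(g) - H(g), l\}$ really does give both of the prerequisites for Observation \ref{invblobsobs}, which is a one-line unpacking. No obstacle is expected, as the corollary is essentially a change-of-notation consequence of the preceding observation.
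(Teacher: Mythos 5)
Your proof is correct and matches the paper's intended argument: the corollary carries no separate proof in the paper precisely because it is, as you say, the specialization of Observation \ref{invblobsobs} to $H(g)=0$, with the hypothesis $k<\min\{m(g)-H(g),l\}$ unpacking exactly into the two conditions $k<l$ and $H(g)+k<m(g)$ that the observation requires. Nothing further is needed.
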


The action on such a rib $\mathfrak{r}^{+,f}$ leaves $m(f)$ and $H(f)$ fixed, 
but changes the status of lamps between those positions.
This gives a permutation on the set $m_l^{-1}(k)$.

The similar statements also hold for negative ribs.

\bibliographystyle{plain}
\bibliography{joneskelseyhoroboundarydl}

\def\cprime{$'$}
\begin{thebibliography}{10}

\bibitem{bh}
Martin~R. Bridson and Andr{\'e} Haefliger.
\newblock {\em Metric spaces of non-positive curvature}, volume 319 of {\em
  Grundlehren der Mathematischen Wissenschaften [Fundamental Principles of
  Mathematical Sciences]}.
\newblock Springer-Verlag, Berlin, 1999.

\bibitem{develin}
Mike Develin.
\newblock Cayley compactifications of abelian groups.
\newblock {\em Ann. Comb.}, 6(3-4):295--312, 2002.

\bibitem{FF04}
Shmuel Friedland and Pedro~J. Freitas.
\newblock {$p$}-metrics on {${\rm GL}(n,\mathbb{C})/\mathrm{U}_n$} and their
  {B}usemann compactifications.
\newblock {\em Linear Algebra Appl.}, 376:1--18, 2004.

\bibitem{GZ02}
Rostislav~I. Grigorchuk and Andrzej {\.Z}uk.
\newblock The lamplighter group as a group generated by a 2-state automaton,
  and its spectrum.
\newblock {\em Geom. Dedicata}, 87(1-3):209--244, 2001.

\bibitem{Gromov81}
M.~Gromov.
\newblock Hyperbolic manifolds, groups and actions.
\newblock In {\em Riemann surfaces and related topics: {P}roceedings of the
  1978 {S}tony {B}rook {C}onference ({S}tate {U}niv. {N}ew {Y}ork, {S}tony
  {B}rook, {N}.{Y}., 1978)}, volume~97 of {\em Ann. of Math. Stud.}, pages
  183--213. Princeton Univ. Press, Princeton, N.J., 1981.

\bibitem{joneskelsey}
Keith Jones and Gregory Kelsey.
\newblock Visual boundaries of diestel-leader graphs.
\newblock {\em Top. Proc.}, 46:181--204, 2015.

\bibitem{Z2boundary}
Kyle Kitzmiller and Matt Rathbun.
\newblock The visual boundary of {$\Bbb Z^2$}.
\newblock {\em Involve}, 4(2):103--116, 2011.

\bibitem{KN09}
Tom Klein and Andrew Nicas.
\newblock The horofunction boundary of the {H}eisenberg group.
\newblock {\em Pacific J. Math.}, 242(2):299--310, 2009.

\bibitem{KN10}
Tom Klein and Andrew Nicas.
\newblock The horofunction boundary of the {H}eisenberg group: the
  {C}arnot-{C}arath\'eodory metric.
\newblock {\em Conform. Geom. Dyn.}, 14:269--295, 2010.

\bibitem{rieffel}
Marc~A. Rieffel.
\newblock Group {$C^*$}-algebras as compact quantum metric spaces.
\newblock {\em Doc. Math.}, 7:605--651 (electronic), 2002.

\bibitem{steintaback}
M.~{Stein} and J.~{Taback}.
\newblock {Metric Properties of Diestel-Leader Groups}.
\newblock {\em Michigan Math. J.}, 62(2):365--286, 2013.

\bibitem{Walsh09}
Cormac Walsh.
\newblock Busemann points of {A}rtin groups of dihedral type.
\newblock {\em Internat. J. Algebra Comput.}, 19(7):891--910, 2009.

\bibitem{Walsh11}
Cormac Walsh.
\newblock The action of a nilpotent group on its horofunction boundary has
  finite orbits.
\newblock {\em Groups Geom. Dyn.}, 5(1):189--206, 2011.

\bibitem{WW05}
Corran Webster and Adam Winchester.
\newblock Boundaries of hyperbolic metric spaces.
\newblock {\em Pacific J. Math.}, 221(1):147--158, 2005.

\bibitem{WW06}
Corran Webster and Adam Winchester.
\newblock Busemann points of infinite graphs.
\newblock {\em Trans. Amer. Math. Soc.}, 358(9):4209--4224 (electronic), 2006.

\bibitem{Woess-Lamplighter}
Wolfgang Woess.
\newblock Lamplighters, {D}iestel-{L}eader graphs, random walks, and harmonic
  functions.
\newblock {\em Comb. Probab. Comput.}, 14(3):415--433, May 2005.

\end{thebibliography}
\end{document}